\documentclass[a4paper,11pt]{article}
\usepackage{stmaryrd}
\usepackage{graphicx}
\usepackage{amsfonts}
\usepackage{amsmath,amsthm}
\usepackage{amssymb}
\usepackage{mathrsfs}
\usepackage{indentfirst}
\usepackage{titlesec}
\usepackage{caption2}
\usepackage{color}
\usepackage[normalem]{ulem}
\usepackage{algorithm}
\usepackage{algorithmic}
\usepackage{lineno}
\numberwithin{equation}{section}
\usepackage[english]{babel}
\usepackage{cite}
\usepackage{bbm}
\usepackage[hypertexnames=false]{hyperref}
\usepackage{comment}

\newtheorem{theorem}{Theorem}
\newtheorem{definition}{Definition}

\newtheorem{assumption}{Assumption}

\newtheorem{remark}{Remark}
\newtheorem{lemma}{Lemma}[section]
\newtheorem{corollary}{Corollary}[section]

\providecommand{\torus}{\mathbb{T}}
\providecommand{\sphere}{\mathbb{S}}
\providecommand{\su}{\mathfrak{su}}

\begin{document}

\title{Coadjoint averaging and cross-scale fluxes in a fast-slow stochastic Euler-Arnold system on $SU(N)$}

\author{
Wenqing Hu
\thanks{Department of Mathematics and Statistics, Missouri University of Science and Technology
(formerly University of Missouri, Rolla), Rolla, MO, 65409, USA. Email: \texttt{huwen@mst.edu}}
}

\date{}

\maketitle

\begin{abstract}
We study the emergence of (cross-scale) fluxes associated with energy and enstrophy in a stochastic version of Zeitlin's $SU(N)$ approximation of 2-d Euler dynamics. Motivated by the Euler-Arnold
formulation, we interpret the nonlinear transport as motion along coadjoint
orbits, reflecting an underlying symmetry that preserves all Casimir invariants. We introduce a fast-slow stochastic framework in which rapid mixing is modeled
by a structured fast stochastic forcing acting along selected directions. We show that when the fast dynamics preserves the full coadjoint orbit symmetry given by the natural symplectic structure, the averaged system exhibits no nontrivial flux. In contrast, when this symmetry is broken by tangential non-Hamiltonian vector fields on the coadjoint orbit, we identify conditions that yield nonzero fluxes carried by the Euler-Arnold nonlinearity. Thus, coadjoint symmetry suppresses averaged nonlinear flux, whereas its breaking can select a preferred direction of cross-scale transfer.
\end{abstract}

\emph{Keywords}: Zeitlin's $SU(N)$ approximation, Euler-Arnold equation, coadjoint orbit, averaging principle, energy and enstrophy fluxes, $2$-d fluid flows.

\emph{2020 Mathematics Subject Classification Numbers}: 17B08, 60H10, 70L10, 76F25, 35Q31. 

\tableofcontents

\section{Introduction}\label{Sec:Intro}

Understanding the mechanism of energy and enstrophy transfer in $2$-d fluid flows remains an important problem, particularly in connection with Kraichnan's 2-d turbulence theory that involves phenomenological statements such as constant flux and dual cascade at large Reynolds numbers (see \cite{[Kraichnan1967]}, \cite[Lecture 39]{[SverakLectureNotesHydrodynamics]} and Section \ref{Sec:Background} for background introduction). While the physical picture is well established, a mathematically tractable framework that captures the interplay between nonlinear transport, forcing, and dissipation is still far from complete (see, for example,  \cite{[BedrossianCascadeLawsSchrodinger]}, \cite{[BedrossianZelatiPaulFluxSufficient]}, \cite{[DudleyFluxLaws]} for some recent progress in this direction). One major difficulty is that the Kraichnan picture concerns a non-equilibrium limiting regime in which the Hamiltonian/Lie-Poisson structure of $2$-d Euler flows, the invariant measures of the forced-dissipative dynamics, and the inviscid large-scale limit must be understood simultaneously.

A natural approach is to approximate the $2$-d Euler dynamics by finite-dimensional systems. Among the available approximations, the $SU(N)$ model introduced by Zeitlin (see \cite{[Zeitlin1991]}) is particularly appealing. Take the torus $\torus^2$ as an example. Schematically, the $2$-d Euler equation on $\torus^2$ written in terms of vorticity $\omega=\omega(x,t)$ takes the form 
\begin{equation}\label{Intro:Eq:2dEuler-vorticity}
\dfrac{\partial \omega}{\partial t} + u \cdot \nabla \omega = 0 \ , 
\end{equation}
where the velocity $u = \nabla^\perp \Delta^{-1}\omega$ and $\nabla^\perp = \left(-\dfrac{\partial}{\partial x_2}, \dfrac{\partial}{\partial x_1}\right)$. This equation can be viewed as an Euler-Arnold equation describing the geodesic flows on the infinite-dimensional group $SDiff(\torus^2)$ \footnote{The group of smooth,
orientation-preserving, volume-preserving diffeomorphisms of $\mathbb{T}^2$.} equipped with the identity metric (see \cite{[Arnold1966JFourier]}, \cite{[Arnold-KhesinBook]}, \cite{[ArnoldClassicalMechanics]}, \cite{[MarsdenWeistein1983]}). In Zeitlin's approximation, equation (\ref{Intro:Eq:2dEuler-vorticity}) is approximated by the following Euler-Arnold equation on $SU(N)$:
\begin{equation}\label{Intro:Eq:EulerArnoldZeitlinTruncated}
\dot{\Omega} = [\Psi, \Omega] \ ,
\end{equation}
where $\Omega \in \mathfrak{su}(N)$ is on the Lie algebra of $SU(N)$, $[\bullet, \bullet]$ is the Lie bracket on $\su(N)$, and $\Psi=-A^{-1}\Omega$ for a linear positively-definite symmetric operator $A: \su(N)\rightarrow \su(N)$ (see (\ref{Eq:InertiaOperatorZeitlinCase})) mimicking the negative Laplace operator $-\Delta$ on $\torus^2$. One can imagine that there is a projection map $T_N \omega = \Omega$, and it can be shown that as $N\rightarrow \infty$, for identical initial conditions $T_N\omega(0)=\Omega(0)$, the approximation of (\ref{Intro:Eq:EulerArnoldZeitlinTruncated}) to (\ref{Intro:Eq:2dEuler-vorticity}) is in the sense that for every fixed $t$ we have $\|T_N^* \Omega(t)-\omega(t)\|_{L^2(\torus^2)}\rightarrow 0$ as $N\rightarrow \infty$ (see \cite{[GallagherZeitlinApproximation]} and the discussions in Section \ref{Sec:Background}).

As an Euler-Arnold equation on the infinite-dimensional group $SDiff(\torus^2)$, equation (\ref{Intro:Eq:2dEuler-vorticity}) admits a natural decomposition into geometric symmetry and
mechanical evolution.  The Lie-Poisson structure encodes the underlying
relabeling symmetry of the fluid, while the Hamiltonian determines the
specific dynamics within this geometric framework. In particular, the transport nature of (\ref{Intro:Eq:2dEuler-vorticity}) reflects an intrinsic symmetry of the flow:
the vorticity is advected by area-preserving diffeomorphisms, so that its evolution is restricted to the \textit{coadjoint orbit} $\mathcal{O}(\omega_0)=\{\omega_0\circ \Phi, \Phi\in SDiff(\torus^2)\}$, i.e., all possible rearrangements of initial vorticity $\omega_0$ under the group action. As a consequence, the \textit{Casimir functional} of any smooth function $\gamma(\omega)$ taking the form $c_\gamma(\omega)=\displaystyle{\int_{\torus^2}\gamma(\omega(x,t))dx}$ is independent of $t$. In particular, the moments $
\displaystyle{\int_{\mathbb{T}^2} \omega^2 dx} , \dots , \displaystyle{\int_{\mathbb{T}^2} \omega^k}dx, \dots$
are conserved along the flow. These form an infinite family of Casimir invariants for the Lie-Poisson dynamics on $SDiff(\mathbb{T}^2)$, and they determine the coadjoint orbit on which the dynamics evolves.

The Zeitlin's approximation preserves this geometry, rather than merely truncating Fourier modes. Its dynamics yields the conservation of the Casimir quantities taking the form $
\operatorname{tr}(\Omega^2),\ \operatorname{tr}(\Omega^3),\ \dots,\ \operatorname{tr}(\Omega^N)$. Their common level sets determine the corresponding coadjoint orbit
\begin{equation}\label{Intro:Eq:CoadjointOrbitZeitlinEulerArnold}
\mathcal{O}(c_1, ..., c_{N-1}) = \{\operatorname{tr}(\Omega^2)=c_1, \ \operatorname{tr}(\Omega^3) = c_2, \dots , \operatorname{tr}(\Omega^N) = c_{N-1} \} \ .
\end{equation}

For either group $SDiff(\torus^2)$ or $SU(N)$, when restricted to a fixed coadjoint orbit, the Euler-Arnold dynamics becomes a Hamiltonian
flow with respect to the natural symplectic structure (the Kirillov-Kostant-Souriau, or KKS form, see Section \ref{Sec:Background} and the references cited there) and therefore the Liouville measure, while the
Hamiltonian is given by the energy. In this sense, the Casimir invariants determine
the coadjoint orbit, while the Hamiltonian governs the motion within this orbit.

Therefore the Zeitlin approximation provides a finite-dimensional model
that preserves the geometric constraints of the Euler dynamics as well as its Hamiltonian structure, while replacing
the nonlinear transport by a Lie bracket interaction, which makes it a natural
framework for studying flux mechanisms.

In Kraichnan's picture of 2-d turbulence theory, the major concern is that for the 2-d Euler equation (\ref{Intro:Eq:2dEuler-vorticity}) subject to small viscosity $\nu\Delta \omega$, damping term $-\alpha \omega$ and random forcing $f$ concentrated at some intermediate scale, as $\nu\rightarrow 0, \alpha=\alpha(\nu)\rightarrow 0$, one expects the so-called ``inertial range" consisting of a spectrum of Fourier modes within which the nonlinearity $u\cdot \nabla\omega$ is the only dominant term that carries energy and enstrophy between these modes. The average flux of energy and enstrophy remains approximately independent of wavenumber within this inertial range. Given Zeitlin's finite-dimensional approximation scheme via $SU(N)$, a natural first question is to identify possible flux mechanisms for Euler-Arnold system (\ref{Intro:Eq:EulerArnoldZeitlinTruncated}). 

The goal of this work is to provide an initial step towards identifying a possible geometric mechanism within the
Euler-Arnold dynamics on $SU(N)$ that can produce nontrivial fluxes
across scales. This is partly motivated by the problem of constructing mechanical models for cascade, see \cite[Section 15]{[khesin_misiolek_shnirelman_2023_geometric_hydrodynamics_open_problems]}. However, the present work only establishes a local finite-time averaging principle on regular coadjoint strata  for flux dynamics, while convergence of stationary measures and stationary fluxes is not addressed here.

In fact, Kraichnan's theory of 2-d turbulence fundamentally relies on the injection of energy at some intermediate scale together with dissipation mechanisms at large and small scales. Such a regime is inherently non-equilibrium in nature, and the inertial range corresponds to some non-equilibrium steady state (NESS), creating stationary fluxes across that range. This kind of steady state should be a result of mutual interactions between the nonlinearity, the dissipation and the random forcing.
 Mimicking the situation considered in 2-d turbulence, we can add dissipation and forcing term to (\ref{Intro:Eq:EulerArnoldZeitlinTruncated}), to modify it into the following equation
\begin{equation}\label{Intro:Eq:EulerArnoldZeitlinTruncatedDissipatedForced}
\dot{\Omega}=[\Psi,\Omega]-\alpha \Omega + \nu (-A) \Omega + f \ ,
\end{equation}
where $\alpha>0$ is the damping constant, $\nu>0$ is the viscosity constant, $f$ is an external random forcing injecting energy only at some scale $k_f>\!\!>1$, and $(-A)$ plays the role as the Laplace operator (see (\ref{Eq:InertiaOperatorZeitlinCase}) below). In this case, once the unique ergodicity is established (for example, via H\"{o}rmander's bracket computations, see \cite{[HuSverak2018DynamicsGroup]}), we obtain an invariant measure $\mu^{(N)}_{\alpha, \nu}$ under which we can discuss the emergence of nontrivial flux, as first $N\rightarrow \infty$ and then $\nu\rightarrow 0, \alpha=\alpha(\nu)\rightarrow 0$. However, when $N$ is large, the invariant measure $\mu^{(N)}_{\alpha, \nu}$ would be on a high-dimensional space, and accurate computations or estimates with respect to this measure for quantities like energy (or enstrophy) flux are facing the ``curse of dimensionality" problem and become almost mathematically intractable.

To overcome this difficulty, let us go back to the original 2-d Euler equation (\ref{Intro:Eq:2dEuler-vorticity}). From a dynamical point of view, there are at least two roles played by the nonlinear term $u\cdot \nabla \omega$ in (\ref{Intro:Eq:2dEuler-vorticity}):

\begin{itemize}
\item[(A)] Mixing: The nonlinear term \(u \cdot \nabla \omega\) can be viewed as a vector field
on the phase space that couples different Fourier modes.
When combined with the external stochastic forcing, this nonlinear coupling tends to spread
the influence of the noise across a large number of degrees of freedom via H\"ormander-type iterated Lie brackets (see \cite{[Hormander1967]}).
As a result, randomness injected at a limited set of modes via the stochastic forcing may propagate to
other modes over time, and when all modes are reached one arrives at the unique ergodicity of the system (see \cite{[HairerMattingly2006]}, \cite{[HairerMattinglyTheoryHypoellipticitySPDE]}, \cite{[HuSverak2018DynamicsGroup]}). The bracket structure suggests a hierarchy of directions along which
stochastic influence propagates. Directions appearing at lower bracket
depth are natural candidates for faster mixing (compare this with the small-time asymptotics
of hypoelliptic heat kernels e.g.,  \cite{[BenArousHypoellipticCutLocus]},
\cite{[BenArousHypoelliptic]}, \cite{[LeandreDegenrateDiffusion1]}, \cite{[LeandreDegenrateDiffusion2]}).

\item[(B)] Energy and enstrophy transfer: In Fourier basis, the term $u\cdot \nabla \omega$ is written as $3$-mode (triadic) interaction via the pattern $k+l=m$ (see equation (\ref{Eq:2dEulerFourierComponents}) below). This means the nonlinear term can transfer energy and enstrophy between these interacting modes, and when combined with the forcing term and the dissipation/damping terms this creates complicated energy and enstrophy flows between different Fourier modes. Unlike the rapid propagation of stochastic influence described above, this
energy and enstrophy transfer mechanism is not purely mixing. It reflects a subtle statistical
bias arising from the asymmetric interplay between forcing, nonlinear coupling,
and dissipation, which is expected to be responsible for the emergence of inertial-range behavior.
\end{itemize} 

During the evolution of fluid flows, the above two effects take place simultaneously, but analyzing them together seems to be very hard, even for the finite dimensional model systems like (\ref{Intro:Eq:EulerArnoldZeitlinTruncatedDissipatedForced}). Heuristically, the net inter-scale flux is a delicate third-order quantity arising from a small statistical bias among many cancelling triad contributions from the nonlinear term, and therefore may evolve on a slower time scale, when compared with the fast mixing created by the interaction of the nonlinear term with random forcing. This separation of time scales becomes visible in the turbulent regime (high Reynolds number), where strong nonlinearity and large scale separation allow rapid phase mixing to coexist with a slowly emerging net flux selected by forcing and dissipation.
This means at the turbulent regime we might expect to see a separation of scale between these two effects: mixing is happening much faster than energy transfer. This is only a physical heuristic, but it seems to have been supported by some computational theories of $2$-d flows (see for example the theory of stochastic superparametrization \cite{[GroomsMajdaSuperParametrization]}) and partly from the theory of inviscid damping of $2$-dimensional flows (see \cite{[BedrossianMasmoudiInviscidDamping]}).

If we can assume, in the turbulence regime,  that (A) Mixing happens much faster than (B) Energy and enstrophy transfer, then we can model (A) using a fast noise. This means, at the finite-dimensional level, we can investigate a system modified from (\ref{Intro:Eq:EulerArnoldZeitlinTruncatedDissipatedForced}), which is schematically written as
\begin{equation}\label{Intro:Eq:EulerArnoldZeitlinTruncatedDissipatedForcedWithFastNoise}
\dot{\Omega}=\dfrac{1}{\varepsilon} \cdot \mathscr{F} + [\Psi, \Omega]-\alpha \Omega + \nu(-A) \Omega + f \ , \ 0<\varepsilon <\!\!<1 \ ,
\end{equation}
where $\mathscr{F}$ is an additional stochastic forcing term that represents the fast mixing created by the interaction between the nonlinearity $[\Psi, \Omega]$ and the external stochastic forcing $f$. In this case, one can discuss the limit of the invariant measures $\mu^{(N)}_{\varepsilon, \alpha, \nu}$ as $\varepsilon \rightarrow 0$, denoted as $\mu^{(N)}_{0, \alpha, \nu}$. At this limit measure $\mu^{(N)}_{0, \alpha, \nu}$, we are able to focus on the effect of (B) which directly connects to mechanisms of fluxes, without facing the difficulty of simultaneously dealing with (A) and (B). We can further estimate, from $\mu^{(N)}_{0, \alpha, \nu}$, as first $N\rightarrow\infty$ and then $\nu\rightarrow 0, \alpha=\alpha(\nu)\rightarrow 0$, the fluxes that are related to the inertial range phenomenon via a more approachable mathematics, where the key question is whether a structured fast stochastic forcing can isolate the slow energy and enstrophy transfer mechanism.

From the point of view of asymptotic analysis, the above limiting procedure can be viewed as a fast-slow averaging (homogenization) procedure (see \cite{[FWBook]}), which is essentially some kind of model reduction that helps to circumvent the difficulty arising from ``curse of dimensionality" as $N\rightarrow\infty$. Since we work with a finite-dimensional Lie group $SU(N)$, our proposed system (\ref{Intro:Eq:EulerArnoldZeitlinTruncatedDissipatedForcedWithFastNoise}) with fast stochastic forcing can also be regarded as a toy model in Stochastic Geometric Mechanics (see \cite{[BismutGeometricMechanics]}, \cite{[Cami-OrtegaStochasticMechanics]}), and so it may be of independent interest to investigate.

From the above discussion of (A) Mixing, the fast stochastic forcing term $\mathscr{F}$ in (\ref{Intro:Eq:EulerArnoldZeitlinTruncatedDissipatedForcedWithFastNoise}) itself may have separation of scales based on the hierarchy and interaction coefficients calculated from Lie bracket operations. These Lie brackets are computed iteratively between the nonlinear term $[\Psi, \Omega]$ and the external stochastic forcing term $f$. However,
deriving the precise architecture of scale separation in $\mathscr{F}$ via iterated Lie brackets appears to be hard to track. Instead of attempting such a derivation, we adopt a
geometric viewpoint based on the coadjoint orbit of the Euler-Arnold equation. Actually, $\mathscr{F}$ can be understood as an effective closure of the rapid stochastic mixing generated by the interaction between $[\Psi, \Omega]$ and $f$, and the coadjoint geometry can be taken as the organizing principle for this closure. This is because the Euler-Arnold nonlinearity $[\Psi, \Omega]$ gives a vector field tangent to coadjoint orbits, and the first few brackets that it has with $f$ will create directions not far away from the tangential directions to the coadjoint orbit. We therefore model the leading fast mixing effect by stochastic vector fields tangent to coadjoint orbits, plus small transversal perturbations to these tangential directions. That is to say we assume
\begin{equation}\label{Intro:Eq:TangentialFastStochastic}
\mathscr{F}dt=dX+\rho \varepsilon dY \ ,
\end{equation}
where $X$ is tangent to the coadjoint orbit and $Y$ is allowed to be transversal to it. To ensure unique ergodicity, the small perturbation parameter $\rho>0$ is chosen with $\rho^2 \sim  O(\nu+\alpha)$, i.e., $\rho^2$ has the same order as the viscosity and damping parameters (see Theorem \ref{Thm:ErgodicityEulerArnoldZeitlinTruncatedDissipatedForcedWithFastNoise} in Section \ref{Sec:AveragingFramework:EulerArnoldMultiscale}). As we have seen, in the Euler-Arnold formulation, the phase space carries a natural decomposition
induced by symmetry. The coadjoint orbit represents the set of configurations
related by relabeling transformations, i.e., by rearrangements of vorticity
that preserve its distribution. Therefore, modeling fast mixing along coadjoint directions
can be viewed as a stochastic representation of the intrinsic transport
mechanism of the 2-d Euler dynamics (compare with \cite{[HuSverak2018DynamicsGroup]}). However, since numerical and experimental studies of $2$-d turbulence suggest that higher-order Casimirs are effectively dissipated through small-scale mixing (see, e.g. \cite{[BouchetVenaille2012Statistical2dGeophysical]}), including only the tangential fast stochastic forcing direction $X$ in (\ref{Intro:Eq:TangentialFastStochastic}) may be too ideal, while adding the transversal perturbations $\rho \varepsilon Y$ to some extent remedies this issue.

Within this effective coadjoint-tangent closure, there are two different levels of symmetry. The most symmetric choice is to take the $X$ vector to consist of Hamiltonian vector fields on the coadjoint orbit. Such fields preserve the KKS symplectic form and hence the corresponding volume measure $m_{\mathrm{KKS}}$ (which is the normalized Liouville measure) on the orbit. When $X$ propagates stochasticity to all directions on the coadjoint orbit, this gives rapid thermalization with respect to the natural orbit ensemble. Such a regime represents the idealized situation in which the fast stochastic forcing direction $X$ produces complete coadjoint mixing without introducing any statistical bias inside the orbit. It turns out, that under this full symmetry, we can show (see Section \ref{Sec:HeatingUpAllCoAdjoint}) that there will be no nontrivial flux in the averaged system carried by the Euler-Arnold nonlinearity $[\Psi, \Omega]$. 

The next level of symmetry preserves the coadjoint orbit but breaks the invariance of the KKS form. This means that the $X$-induced tangent dynamics preserves some volume measure $m_{\mathrm{KKS}}^{\kappa}$ on the coadjoint orbit with $d m_{\mathrm{KKS}}^{\kappa}(\Omega)=(1+\kappa q(\Omega)+O(\kappa^2)) \cdot dm_{\mathrm{KKS}}(\Omega)$, $0<\kappa<\!\!<1$ for some $q(\Omega)$ integrating to $0$ on the coadjoint orbit. Such a situation can be created from tangential non-Hamiltonian vector $X$, which encodes the statistical bias produced by forcing and dissipation while remaining consistent with the idea that the fast motion is primarily a coadjoint-tangent rearrangement. We emphasize that this symmetry-breaking mechanism is not caused by the transversal perturbation $\rho \varepsilon Y$ in (\ref{Intro:Eq:TangentialFastStochastic}), but by the fact that the effective tangent dynamics $X$ need not preserve the KKS form. We will show in Section \ref{Sec:SymmetryBreakingCoAdjoint} conditions that can yield nontrivial flux produced in this symmetry-breaking situation.

The paper is organized as follows: In Section \ref{Sec:Background} we provide background information regarding the Euler-Arnold equation on $SU(N)$ and Zeitlin's approximation of $2$-d fluid flows. In Section \ref{Sec:GeometryAlgebraSUN} we further develop some geometry and algebra regarding $SU(N)$ coadjoint orbits. In Section \ref{Sec:AveragingFramework} we introduce our fast-slow stochastic system and its coadjoint averaging framework, with discussions on different notions of fluxes that we consider. In Section \ref{Sec:HeatingUpAllCoAdjoint} we show that there is no nontrivial flux carried by the nonlinearity in the fully symmetric coadjoint averaging case. Finally in Section \ref{Sec:SymmetryBreakingCoAdjoint} we discuss the symmetry breaking case and see how it can develop nontrivial flux.

\section{Background: Euler-Arnold equation on $SU(N)$ as an approximation of 2-dimensional fluid flows}\label{Sec:Background}

\subsection{2-d Euler and damped-forced Navier-Stokes equations}\label{Sec:Background:2dEulerNSE}

Consider the $2$-d Navier-Stokes equation on a torus $\mathbb{T}^2=(\mathbb{R}/2\pi \mathbb{Z})^2$ written in the following vorticity form
\begin{equation}\label{Eq:2d-ns-damped-forced:vorticityform}
\dfrac{\partial \omega}{\partial t} + u\cdot \nabla \omega = \nu \Delta \omega - \alpha \omega + f \ .
\end{equation}
Here $\omega=\omega(x,t), x\in \mathbb{T}^2, t\geq 0$ is the vorticity and $u= \mathcal{K} \omega$ is the corresponding velocity field. The Biot-Savart operator $\mathcal{K}=\nabla^\perp \Delta^{-1}$ with $\nabla^\perp = \left(-\dfrac{\partial}{\partial x_2}, \dfrac{\partial}{\partial x_1}\right)$. The viscosity constant $\nu>0$ and the linear damping constant $\alpha>0$. The forcing term $f=f(x,t)$ is stochastic and concentrated at an injection scale $k_f>\!\!> 1$. Define
\begin{equation}\label{Eq:Z2FullSpaceLattice}
\mathcal{K}_{\mathbb{Z}^2}= \mathbb{Z}^2\backslash\{(0,0)\} \ ,
\end{equation}
and
\begin{equation}\label{Eq:Z2HalfSpaceLattice}
\mathcal{K}_{\mathbb{Z}^2}^+=\{k=(k_1, k_2)\in \mathcal{K}_{\mathbb{Z}^2}, k_1>0 \text{ or } (k_1=0, k_2>0)\} \ .
\end{equation}
Then formally, 
\begin{equation}\label{Eq:2d-ns-damped-forced:forcing}
f(x,t)=\sum_{k\in \mathcal{K}_{\mathbb{Z}^2}^+, |k|\approx k_f} q_k
\left[
\cos(k\cdot x)\,\dot\beta_t^{(k,\cos)}
+
\sin(k\cdot x)\,\dot\beta_t^{(k,\sin)}
\right] \ ,
\end{equation}
where for each $k$,  $\beta_t^{(k,\cos)} , \beta_t^{(k,\sin)}$ are independent
standard real Brownian motions defined on the filtered probability space $(\mathbf{\Omega}, \mathcal{F}_t, \mathbf{P})$ and \(q_k\in\mathbb R\) are forcing amplitudes such that $0\leq \sum\limits_{k\in \mathcal{K}_{\mathbb{Z}^2}^+ , |k|\approx k_f} q_k^2<\infty$.

In the special case when $\nu=\alpha=f\equiv 0$, equation (\ref{Eq:2d-ns-damped-forced:vorticityform}) becomes the $2$-d Euler equation written in vorticity form
\begin{equation}\label{Eq:2dEuler-vorticity}
\dfrac{\partial \omega}{\partial t} + u \cdot \nabla\omega = 0 \ , \ u = \nabla^\perp \Delta^{-1}\omega \ .
\end{equation}
If we write the solution $\omega(x,t)$ for the above 2-d Euler equation in Fourier modes, i.e., $\omega(x,t) = \sum\limits_{k \in \mathbb{Z}^2\setminus\{(0,0)\}} \omega_k(t)e^{ik\cdot x}$, then the dynamics of (\ref{Eq:2dEuler-vorticity}) takes the form (see \cite[equation (2.2)]{[HairerMattingly2006]}):
\begin{equation}\label{Eq:2dEulerFourierComponents}
\dot{\omega}_m=\sum\limits_{k, l, k+l=m} \dfrac{m\times k}{|k|^2}\omega_k\omega_l \ .
\end{equation}
From the above, we see that in terms of Fourier modes, the nonlinear term $u\cdot \nabla \omega$ in (\ref{Eq:2dEuler-vorticity}) is a convolution over triads satisfying $k+l=m$,
which reflects the fundamental nonlinear interaction structure of 2-dimensional Euler flows.

\subsection{Energy and enstrophy fluxes in Kraichnan's 2-d turbulence theory}\label{Sec:Background:Kraichnan2dTurbulence}

Ergodic properties of this system (\ref{Eq:2d-ns-damped-forced:vorticityform}) have been discussed a lot in the literature (see, for example \cite{[HairerMattingly2006]}, \cite{[HairerMattinglyTheoryHypoellipticitySPDE]}). These works focus on identifying the structure of the random forcing (\ref{Eq:2d-ns-damped-forced:forcing}), i.e., which modes need to be randomly forced, that can make the system (\ref{Eq:2d-ns-damped-forced:vorticityform}) ergodic and admit a unique invariant measure $\mu_{\nu, \alpha}$. Assuming in advance that system (\ref{Eq:2d-ns-damped-forced:vorticityform}) is already uniquely ergodic with stationary measure $\mu_{\nu, \alpha}$, Kraichnan proposed a phenomenological 2-d turbulence theory (see \cite{[Kraichnan1967]}) regarding the transfer of energy and enstrophy between different Fourier modes as the viscosity $\nu \rightarrow 0$ and the corresponding damping coefficient $\alpha=\alpha_\nu\rightarrow 0$ that scale with $\nu$. Assume $\omega(x, t), x\in \mathbb{T}^2, t\geq 0$ is the solution to (\ref{Eq:2d-ns-damped-forced:vorticityform}) with $\omega(x,0)$ distributed as $\mu_{\nu, \alpha}$, so that by stationarity for each $t\geq 0$ the solution $\omega(x,t)$ is also distributed 
as $\mu_{\nu, \alpha}$. Let the corresponding stationary velocity field be $u(x, t)=\nabla^\perp \Delta^{-1} \omega(x,t)$. If we write $\omega$ and $u$ in Fourier modes, then we get
$$\omega(x, t)=\sum\limits_{k\in \mathbb{Z}^2\backslash\{(0,0)\}} \omega_k(t) e^{ik \cdot x} \ , \ u(x, t)=\sum\limits_{k\in \mathbb{Z}^2\backslash\{(0,0)\}} u_k(t) e^{ik \cdot x} \ .$$
Notice that for each Fourier mode $k\neq 0$, the Biot-Savart law gives
$
u_k = -\dfrac{i\,k^\perp}{|k|^2}\omega_k$, $k^\perp = (-k_2,k_1)$, or equivalently,
$
\omega_k = i\,k^\perp \cdot u_k$.

Based on this, the energy and enstrophy spectra are defined by grouping modes with the same magnitude $|k|$:

\begin{equation}\label{Eq:Kraichnan2dTurbulence:EnergyEnstropySpectrum}
\mathcal{E}(n, t)=\dfrac{1}{2}\sum\limits_{|k|=n}|u_k(t)|^2 \ , \ \mathcal{Z}(n, t)=\dfrac{1}{2}\sum\limits_{|k|=n}|\omega_k(t)|^2 \ .
\end{equation}

The cumulative low-frequency energy and enstrophy are then defined by 
\begin{equation}\label{Eq:Kraichnan2dTurbulence:CumulativeLowFreqEnergyEnstropy}
\mathcal{E}_{\leq K}(t) = \dfrac{1}{2}\sum\limits_{|k|\leq K}|u_k(t)|^2 \ , \ \mathcal{Z}_{\leq K}(t)=\dfrac{1}{2}\sum\limits_{|k|\leq K}|\omega_k(t)|^2 \ .
\end{equation}

Since $\omega$ and $u$ are already distributed at stationarity, the total energy $\mathcal{E}=\sum\limits_{n\geq 1}\mathcal{E}(n,t)$ and enstrophy $\mathcal{Z}=\sum\limits_{n\geq 1}\mathcal{Z}(n,t)$ should remain constant during the evolution of (\ref{Eq:2d-ns-damped-forced:vorticityform}). However, this does not mean that for each $K$, the quantities $\mathcal{E}_{\leq K}(t)$ and $\mathcal{Z}_{\leq K}(t)$ also remain constant. In fact, since the linear damping term $-\alpha \omega$ takes effect mostly for low-frequency dissipation, the viscosity term $\nu \Delta \omega$ takes effect mostly for high-frequency dissipation, while the forcing term $f$ injects energy only around an intermediate scale $k_f$, Kraichnan argued that there exist \textit{inertial ranges} $k\in [k_\alpha, k_f]$ and $k\in [k_f, k_\nu]$ with $k_\alpha=k_\alpha(\alpha_\nu)<\!\!< k_f <\!\!< k_\nu(\nu)=k_\nu$ within which the energy and enstrophy are only transferred by the nonlinear term $u\cdot \nabla \omega$ in (\ref{Eq:2d-ns-damped-forced:vorticityform}). Let the (stationary) \textit{flux} of energy from $\mathcal{E}_{\leq K}(t)$ to $\mathcal{E}_{>K}(t)=\mathcal{E}-\mathcal{E}_{\leq K}(t)$ be defined by $\Pi_\mathcal{E}(K)$, and the (stationary) flux of enstrophy from $\mathcal{Z}_{\leq K}(t)$ to $\mathcal{Z}_{>K}(t)=\mathcal{Z}-\mathcal{Z}_{\leq K}(t)$ be defined by $\Pi_\mathcal{Z}(K)$. Then by using the equation (\ref{Eq:2d-ns-damped-forced:vorticityform}) and directly differentiating (\ref{Eq:Kraichnan2dTurbulence:CumulativeLowFreqEnergyEnstropy}), we get the (stationary) energy flux
\begin{equation}\label{Eq:Kraichnan2dTurbulence:EnergyFlux}
\Pi_\mathcal{E}(K)
:= - \mathbf{E}_{\mu_{\nu,\alpha}} \sum_{|k|\le K}
\Re \Big( \widehat{(u\cdot\nabla u)}_k \cdot \overline{u_k} \Big)
\end{equation}
and the (stationary) enstrophy flux
\begin{equation}\label{Eq:Kraichnan2dTurbulence:EnstrophyFlux}
\Pi_\mathcal{Z}(K)
:= - \mathbf{E}_{\mu_{\nu,\alpha}}
\sum_{|k|\le K}
\Re \big(
\widehat{(u\cdot\nabla \omega)}_k
\overline{\omega_k}
\big) \ .
\end{equation}
Here $\widehat{(u\cdot\nabla u)}_k$ and $\widehat{(u\cdot\nabla \omega)}_k$ are the $k$-th Fourier coefficients of $u\cdot\nabla u$ and $u\cdot\nabla \omega$, respectively.

Based on further physical assumptions in the regime when $\nu\rightarrow 0$, such as local interactions, meaning that the dominant contribution to the flux across scale $k$
comes from triad interactions in (\ref{Eq:2dEulerFourierComponents}) among comparable wavenumbers, the fact that we run (\ref{Eq:2d-ns-damped-forced:forcing}) at the stationary distribution $\mu_{\nu, \alpha}$ then implies that in this case as $\nu \rightarrow 0$, $\Pi_\mathcal{E}(K)$ should be independent of $K$ when $K\in [k_\alpha, k_f]$, and $\Pi_\mathcal{Z}(K)$ should be independent of $K$ when $K\in [k_f, k_\nu]$. Moreover, Kraichnan also conjectures a dual cascade: an inverse energy cascade towards low Fourier modes (large scales) and a direct enstrophy cascade towards high Fourier modes (small scales), driven by the nonlinear interactions. This leads to the conjectured behavior of $\Pi_\mathcal{E}(K)$ and $\Pi_\mathcal{Z}(K)$ as
\begin{equation}\label{Eq:Kraichnan2dTurbulence:FluxLawDualCascade:Energy}
\lim\limits_{\nu\rightarrow 0, \alpha=\alpha_\nu \rightarrow 0} \Pi_\mathcal{E}(K)=-\varepsilon <0 \text{ when } K\in [k_\alpha, k_f]
\end{equation} 
and
\begin{equation}\label{Eq:Kraichnan2dTurbulence:FluxLawDualCascade:Enstrophy}
\lim\limits_{\nu\rightarrow 0, \alpha=\alpha_\nu \rightarrow 0} \Pi_\mathcal{Z}(K)=\eta >0 \text{ when } K\in [k_f, k_\nu] \ .
\end{equation} 
As of the date when this manuscript is prepared, as far as the author is aware of, these physical predictions of 2-d turbulence have not been fully established from a rigorous mathematical perspective. In fact, necessary and sufficient conditions for the emergence of cascade flux laws have been discussed, which are proved to be equivalent conditions of the 2-d turbulence predictions. We refer to \cite{[BedrossianCascadeLawsSchrodinger]}, \cite{[DudleyFluxLaws]}, \cite{[BedrossianZelatiPaulFluxSufficient]}   
for some recent references on the mathematical treatment of the flux law problem. 

\subsection{Euler-Arnold equation on a general Lie group $G$ and its application to fluid motions}\label{Sec:Background:EulerArnoldGeneralLieGroup}

The 2-d Euler equation (\ref{Eq:2dEuler-vorticity}) admits a geometric interpretation that reflects its underlying Hamiltonian and Lie-Poisson structure, which has been introduced in the pioneering works of Arnold \cite{[Arnold1966JFourier]} (see also \cite{[Arnold-KhesinBook]}, \cite{[ArnoldClassicalMechanics]}, \cite{[MarsdenWeistein1983]}). In fact, this equation admits a geometric formulation as an Euler-Arnold equation on the
group of volume-preserving diffeomorphisms of $\mathbb{T}^2$, written as  $SDiff(\mathbb{T}^2)$. 

Let us start from the Euler-Arnold equation for a general real Lie group $G$ (see \cite{[Arnold1966JFourier]}). It takes the form
\begin{equation}\label{Eq:EulerArnoldEquation}
\dfrac{dM_c}{dt}=ad^*_{\omega_c} M_c \ ,
\end{equation}
where $\omega_c=L_{g^{-1}*}\dot{g}\in \mathfrak{g}$ is the ``angular velocity in the body", and $M_c=A \omega_c\in \mathfrak{g}^*$ is the ``angular momentum in the body". Here $\mathfrak{g}$ is the Lie algebra of $G$ and $\mathfrak{g}^*$ is its dual space, $g=g(t)$ is a trajectory on the group $G$ and $\dot{g}$ is its derivative, $L_{g^{-1}*}$ is the induced mappings on $TG_g$ of the left action $h\rightarrow g^{-1}h, h\in G$, and $A: \mathfrak{g} \rightarrow \mathfrak{g}^*$ is a positive-definite linear mapping representing the moment of inertia. The right-hand side of (\ref{Eq:EulerArnoldEquation}) is defined by 
\begin{equation}\label{Eq:EulerArnoldEquationCoadjointOperator} (ad_{\omega_c}^* M_c, \zeta)=(M_c, [\omega_c, \zeta]) \ , \ \zeta\in \mathfrak{g} \ ,
\end{equation}
where $(\bullet, \bullet)$ is the pairing between $\mathfrak{g}^*$ and $\mathfrak{g}$, and $[\bullet, \bullet]$ is the Lie bracket on $\mathfrak{g}$. From the point of view of geometry, the Euler-Arnold equation (\ref{Eq:EulerArnoldEquation}) defines a geodesic curve $g(t)$ on $G$ equipped with the left-invariant metric given by the matrix $A$ (see \cite{[TaoEuler-Arnoldblog]}, \cite{[HuSverak2018DynamicsGroup]}). 

It turns out that, for $G=SDiff(\torus^2)$, the Lie algebra $\mathfrak{g} = S_0\text{Vect}(\torus^2)$ consists of all divergence-free vector fields
$u$ on the torus with single-valued stream functions. With the further assumption that $A=\mathrm{id}$ one can derive that in this case $M_c=\omega_c=u$ and the Euler-Arnold equation of $SDiff(\torus^2)$ can be written as 
\begin{equation}\label{Eq:EulerArnoldSDiffTorus}
\dfrac{\partial \omega}{\partial t}+\{\psi, \omega\} = 0 \ , 
\end{equation}
where $\psi$ is the stream function such that $\omega = \Delta \psi$, and the Poisson bracket
\begin{equation}\label{Eq:EulerArnoldPoissonBracket}
\{\psi,\omega\}
=
\dfrac{\partial \psi}{\partial x_1}\dfrac{\partial \omega}{\partial x_2}
-
\dfrac{\partial \psi}{\partial x_2}\dfrac{\partial \omega}{\partial x_1} \ .
\end{equation}
It is easy to check that (\ref{Eq:EulerArnoldSDiffTorus}) is equivalent to the original Euler's equation on $\torus^2$. Thus the Euler's equation (\ref{Eq:2dEuler-vorticity}) characterizes a geodesic curve on $SDiff(\torus^2)$ with the identity metric. 

For a general real Lie group $G$ we consider the diffeomorphism
$
R_{g^{-1}}L_g : G\to G  \ , \ h\mapsto ghg^{-1}$,
which is the inner automorphism of the group $G$. This diffeomorphism $R_{g^{-1}}L_g$ preserves the identity element $e$, and its
derivative at $e$ defines the \textit{adjoint representation}
\begin{equation}\label{Eq:AdjointRepresentation}
Ad_g:\mathfrak g\to\mathfrak g,
\qquad
Ad_g=(R_{g^{-1}}L_g)_{*e} .
\end{equation}
It is
natural to define the dual operator $Ad_g^*: \mathfrak{g}^*\rightarrow \mathfrak{g}^*$ by the
identity \begin{equation}\label{Eq:CoAdjointRepresentation}
(Ad_g^*\xi, \eta)=(\xi, Ad_g\eta) \ , \ \xi\in \mathfrak{g}^* \ , \  \eta\in \mathfrak{g} \ ,
\end{equation}
which is the \textit{coadjoint operator} defining the \textit{coadjoint representation} of
the group $G$. Physically, $Ad_g^*\eta$ describes how the momentum $\eta \in \mathfrak{g}^*$ is transformed after the configuration is changed by $g\in G$. It can be proved, by (\ref{Eq:EulerArnoldEquationCoadjointOperator}), (\ref{Eq:AdjointRepresentation}) and (\ref{Eq:CoAdjointRepresentation}), that 
\begin{equation}\label{Eq:RelationAdStaradstar}
\left.\dfrac{d}{dt}\right|_{t=0} Ad^*_{e^{t\xi}}= ad_\xi^* \ , \ \xi\in\mathfrak{g} \ .
\end{equation}

Thus, starting from a given momentum state $\eta\in\mathfrak g^*$,
one obtains all equivalent momentum states by conjugation with group
elements. This leads naturally to the \textit{coadjoint orbit}
\begin{equation}\label{Eq:CoAdjointOrbit}
\mathcal O^*(\eta)=\{Ad_g^*\eta,\ g\in G\} .
\end{equation}

Let $G_\eta = \{g\in G: Ad_g^*\eta=\eta\} $
be the stabilizer subgroup of $\eta$, then $\mathcal O^*(\eta)
\cong
G/G_\eta$, i.e. each coadjoint orbit is a homogeneous space of the Lie group $G$.
If $G$ is compact, then $G_\eta$ is compact as well, and therefore the coadjoint orbit $\mathcal O^*(\eta)$
is a compact smooth manifold.

It turns out that $\mathcal{O}^*(\eta)$ is also a symplectic manifold on which there is a canonical $2$-form, called the Kirillov-Kostant-Souriau (KKS) form (see \cite{[ArnoldClassicalMechanics]}, \cite{[KirillovOrbit]}, \cite{[KirillovRepresentationBook]}). Let $\mu\in \mathcal O^*(\eta)$ be written as $\mu=Ad_g^*\eta \in \mathfrak{g}^*$. Using (\ref{Eq:RelationAdStaradstar}), the tangent space is
$T_\mu\mathcal O^*(\eta)
=
\{ad_\xi^*\mu:\xi\in\mathfrak g\}$. Then the KKS form is defined by
\begin{equation}\label{Eq:KKSForm}
\omega_{\mathrm{KKS}}
\left(ad_\xi^*\mu,ad_\zeta^*\mu\right)
=
\left( \mu,[\xi,\zeta]\right) \ , \ 
\xi,\zeta \in \mathfrak{g} \ .
\end{equation}

If $\dim \mathcal O^*(\eta)=2d$,
then the Liouville volume measure associated with the KKS form is given by 
\begin{equation}\label{Eq:LiouvilleMeasureKKSForm}
d\widehat{m}_{\mathrm{KKS}}
\stackrel{\text{def}}{=}
\dfrac{1}{d!}\underbrace{\omega_{\mathrm{KKS}}\wedge...\wedge\omega_{\mathrm{KKS}}}_{d \text{ times}} \ .
\end{equation}
Since the KKS form is invariant under the coadjoint action, the Liouville measure is $G$-invariant. On the other hand, the Haar measure on $G$ induces a natural quotient measure
on the homogeneous space $G/G_\eta$. If $G$ is compact, since compact homogeneous spaces admit a unique $G$-invariant measure up to normalization,
the Liouville measure agrees, after normalization, with the Haar push-forward measure
under the orbit map $g\mapsto Ad_g^*\eta$. Equivalently, for every continuous observable $F$, we have
$$
\int_{\mathcal O^*(\eta)}
F(\mu) dm_{\mathrm{KKS}}(\mu)
=
\int_G
F(Ad_g^*\eta) dg_{\mathrm{Haar}} \ ,
$$
where $dm_{\mathrm{KKS}}$ is the normalized Liouville measure.

Given the positive-definite linear mapping $A: \mathfrak{g} \rightarrow \mathfrak{g}^*$ defining the moment of inertia, we introduce the Hamiltonian as
\begin{equation}\label{Eq:CoAdjointHamiltonian}
\mathscr{H}: \mathcal{O}^*(\eta)\rightarrow \mathbb{R} \ , \ \mathscr{H}(M_c)=\dfrac{1}{2}(M_c, A^{-1}M_c) \ .
\end{equation}
It turns out that the Euler-Arnold equation (\ref{Eq:EulerArnoldEquation}) with $M_c(0)=\eta\in \mathfrak{g}^*$ is a Hamiltonian equation on the coadjoint orbit $\mathcal{O}^*(\eta)$. That is to say, if $M_c(0)=\eta$, then $M_c(t)\in \mathcal{O}^*(\eta)\cap \{x\in \mathfrak{g}^*: \mathscr{H}(x)=\mathscr{H}(\eta)\}$ for all $t\geq 0$ (see \cite{[TaoEuler-Arnoldblog]}). 
Define the Hamiltonian vector field $X_{\mathscr{H}}$ associated with the Hamiltonian $\mathscr{H}$ by
\begin{equation}\label{Eq:HamiltonianEqViaKirillovForm}
\omega_{\mathrm{KKS}}(\xi, X_\mathscr{H}) = (d\mathscr{H}, \xi) \ , \ \text{for every vector } \xi \text{ tangent to } \mathcal{O}^*(\eta) \ .
\end{equation}
Then the Hamiltonian flow on $\mathcal{O}^*(\eta)$ generated by this vector field agrees with the flow generated by the Euler-Arnold
equation (\ref{Eq:EulerArnoldEquation}). In this sense, the coadjoint orbit should be regarded as the intrinsic symplectic phase space
for the momentum dynamics.

It turns out that for 2-dimensional incompressible flows (for example on
$\sphere^2$), see \cite{[ModinViviani2026ARMA]}), the coadjoint orbit associated with an initial
vorticity $\omega_0$ admits a natural fluid-dynamical interpretation.
If $\Phi_t$ denotes the Lagrangian flow map generated by the velocity
field, then the vorticity evolves by transport $\omega_t=\omega_0\circ\Phi_t^{-1}$. Hence the coadjoint orbit through $\omega_0$ consists precisely of all
vorticity fields obtained from $\omega_0$ by volume-preserving
rearrangements. The same interpretation also holds on $\torus^2$, where the
Euler flow evolves on coadjoint orbits of the volume-preserving
diffeomorphism group $SDiff(\mathbb{T}^2)$.

\subsection{Zeitlin's $SU(N)$ approximation}\label{Sec:Background:ZeitlinApproximation}

On $\mathbb{T}^2$, one can ``identify" the stream function $\psi$ with the corresponding divergence-free
vector field $u$, and such a stream function can be assumed to have to have zero mean, i.e., $\displaystyle{\int_{\torus^2}\psi(x)dx=0}$. This gives an identification 
\begin{equation}\label{Eq:S0VectT2IdentificationST2}
S_0\text{Vect}(\torus^2)\cong S(\torus^2)
\end{equation}
of the Lie algebra $S_0\text{Vect}(\torus^2)$ with all mean-zero functions 
\begin{equation}\label{Eq:TorusMeanZeroFunctionsAsLieAlgebraSDiff}
S(\torus^2)\stackrel{\text{def}}{=} \left\{\psi: \torus^2\rightarrow \mathbb{R}: \displaystyle{\int_{\torus^2}\psi(x)dx=0}\right\} \ .
\end{equation}
Given this identification, the basis of the complexified Lie algebra $S_0\text{Vect}(\torus^2; \mathbb{C})$ can be established as the Fourier basis $L_k$ in $S(\torus^2; \mathbb{C})=S(\torus^2)\oplus iS(\torus^2)$:
\begin{equation}\label{Eq:FourierBasis2dTorus}
L_k=e^{i k\cdot x} \ , 
\end{equation}
where $k=(k_1,k_2)\in \mathbb{Z}^2\backslash \{(0,0)\}$, and
whose value at a point $(x_1,x_2)$ is $L_k=e^{i(k_1x_1+k_2x_2)}$. The Lie bracket of $S_0\text{Vect}(\torus^2; \mathbb{C})$ can be computed directly by definition from $SDiff(\mathbb{T}^2)$, and under the Fourier basis the structural constants are actually $k\times l$, i.e.
\begin{equation}\label{Eq:CommutatorSVectT2}
[L_k, L_l]=(k\times
l)L_{k+l}
\end{equation} where $k\times l=k_1l_2-k_2l_1$ is the oriented
area of the parallelogram spanned by $k$ and $l$. We then see that by (\ref{Eq:EulerArnoldPoissonBracket}), \begin{equation}\label{Eq:RelationLieBracketS0VectT2andPossionBracket}
[L_k, L_l]=-\{L_l, L_k\}
\end{equation} 
and the Euler-Arnold equation (\ref{Eq:EulerArnoldSDiffTorus}) on $SDiff(\torus^2)$ can be written as

\begin{equation}\label{Eq:EulerArnoldSDiffTorus:LieBracket}
\dfrac{\partial \omega}{\partial t}=[\psi, \omega] \ .
\end{equation}

In \cite{[Zeitlin1991]}, Zeitlin proposed an approximation of the above Lie bracket relation (\ref{Eq:CommutatorSVectT2}) via the Lie algebra of $SU(N)$ as $N\rightarrow\infty$. By definition we have 
\begin{equation}\label{Eq:SUN}
SU(N) = \{A\in \mathbb{C}^{N\times N}: A^*A=I \ , \ \det A = 1\} \ ,
\end{equation}
where $A^*=\overline{A}^{T}$. Although the matrices in $SU(N)$ have complex entries, the group $SU(N)$ is a real Lie group rather than a complex Lie group. Indeed, it is a smooth submanifold of $\mathbb{C}^{N\times N}\cong \mathbb{R}^{2N^2}$.
Its Lie algebra
\begin{equation}\label{Eq:suN}
\mathfrak{su}(N)=\{X\in\mathbb{C}^{N\times N}:X^\ast=-X,\ \operatorname{tr}(X)=0\}
\end{equation}
is a \textit{real} vector space of dimension $N^2-1$, not a complex vector space.

Actually, for the complexification $\mathfrak{su}(N;\mathbb{C})=\mathfrak{su}(N)\oplus i \mathfrak{su}(N)$, we have $\mathfrak{su}(N; \mathbb{C})=\mathfrak{sl}(N;\mathbb{C})$ since for any $A\in \mathfrak{sl}(N; \mathbb{C})$ we have $A=X+iY$ where $X=\dfrac{A-A^*}{2}\in \mathfrak{su}(N)$ and $Y=\dfrac{A+A^*}{2i}\in \mathfrak{su}(N)$. 
The commutation relations in algebra $\mathfrak{su}(N;
\mathbb{C})$ ``approximate" (\ref{Eq:CommutatorSVectT2}) in the following way.
Fix some odd $N$ and consider the following two matrices on
$\mathfrak{sl}(N; \mathbb{C})$:
\begin{equation}\label{Eq:ZeitlinBasisFH}
F=\text{diag}(1,\omega_N,...,\omega_N^{N-1}) \ , \
H=\begin{pmatrix}
0&1&...&0&0\\
0&0&1&...&0\\
0&0&0&...&0\\
...\\
0&0&0&...&1\\
1&0&0&...&0
\end{pmatrix}_{N\times N}
\end{equation} 
where $\omega_N$ is a primitive $N$-th root of unity and may be taken as, for example, $\omega_N=\exp\left(-4\pi \dfrac{i}{N}\right)$.
These matrices obey the identities 
\begin{equation}\label{Eq:ZeitlinIdentitiesHF}
HF=\omega_N FH \ , \ F^N=H^N=1 \ .
\end{equation} 

We need $N$ to be odd here since this guarantees that $2$ is
invertible in $\mathbb Z/N\mathbb Z$, so that the phase factor $\omega_N^{k_1k_2/2}$ in (\ref{Eq:ZeitlinBasisJ}) below
is well defined for $k=(k_1,k_2)\in \mathbb{Z} \times \mathbb{Z} \ (\text{mod } (N,N))$.
More precisely, $k_1k_2/2$ is understood as
$2^{-1}k_1k_2$ in  $\mathbb Z \ (\text{mod } N)$.

Define $N^2-1$ matrices $J_k$, $k=(k_1,k_2)\in \mathbb{Z}\times \mathbb{Z} \
(\text{mod } (N,N))$ and $(k_1,k_2)\neq (0,0) \ (\text{mod } (N,N))$
by setting \begin{equation}\label{Eq:ZeitlinBasisJ}
J_{(k_1,k_2)}=\omega_N^{k_1\cdot k_2/2}F^{k_1}H^{k_2} \ .
\end{equation}

Let  \begin{equation}\label{Eq:BasisInsuNCorrespondtoSVect}
L_{k, N}=\dfrac{N}{4\pi i}J_k \ . \end{equation} 
Then Zeitlin \cite{[Zeitlin1991]} shows that we have
\begin{equation}\label{Eq:CommutatorsuN} 
[L_{k,N}, L_{l, N}]=c_{k, l, N}L_{k+l, N} \ ,
\end{equation}
where for each fixed $k, l$ we have $c_{k,l,N}=\dfrac{N}{4\pi i}
2i\sin\left(\dfrac{2\pi(k\times l)}{N}\right)$ and the limit $\lim\limits_{N\rightarrow\infty} c_{k, l, N}=k\times l$. In this sense, the Lie algebra $\su(N)$ ``approximates" $S_0\text{Vect}(\torus^2)$ as $N\rightarrow \infty$, by comparing (\ref{Eq:CommutatorSVectT2}) with (\ref{Eq:CommutatorsuN}).

It turns out that Zeitlin's approximation also works at the level of Euler-Arnold equation written in the form (\ref{Eq:EulerArnoldSDiffTorus:LieBracket}). In fact, we can introduce the projection operators as the linear map $T_N: S(\mathbb{T}^2)\cong S_0\text{Vect}(\torus^2; \mathbb{C})\rightarrow \su(N; \mathbb{C})$, such that for every $k=(k_1, k_2)\in \mathbb{Z}^2\backslash\{(0,0)\}$ we have
\begin{equation}\label{Eq:ProjectionTN:ZeitlinTorusCase}
T_N L_k= L_{k,N} \ .
\end{equation}
Here $L_k=e^{ik\cdot x}$ is defined in (\ref{Eq:FourierBasis2dTorus}) and $L_{k, N}=\dfrac{N}{4\pi i}J_k$ is defined in (\ref{Eq:BasisInsuNCorrespondtoSVect}). Using the projection map $T_N$, we derive, for the vorticity $\omega$ and the stream function $\psi$ in (\ref{Eq:EulerArnoldSDiffTorus}), that
\begin{equation}\label{Eq:TorusTNTruncationMapOnVorticityAndStream}
T_N\omega = \Omega \ , \ T_N\psi = \Psi \ ,
\end{equation}
where \begin{equation}\label{Eq:ZeitlinExpansionVorticityAndStreamMatrices}
\Omega = \sum\limits_{k\in (\mathbb{Z}\mod N)^2\backslash \{(0,0)\}} \Omega_k L_{k, N} \ , \ \Psi = \sum\limits_{k\in (\mathbb{Z}\mod N)^2 \backslash \{(0,0)\}} -\dfrac{\Omega_k}{|k|^2} L_{k, N} \ .
\end{equation}
The relation between $\Omega$ and $\Psi$ is given by $\Psi = -A^{-1} \Omega$ where 
\begin{equation}\label{Eq:InertiaOperatorZeitlinCase}
A L_{k,N}=|k|^2 L_{k, N}
\end{equation}
is the analogue of $-\Delta$ as in the $SDiff(\torus^2)$ case, and it also gives the metric on $SU(N)$. One can then apply the Euler-Arnold equation (\ref{Eq:EulerArnoldEquation}) to the group $G=SU(N)$ to obtain the following matrix dynamics
\begin{equation}\label{Eq:EulerArnoldZeitlinTruncated}
\dot{\Omega} = [\Psi, \Omega] \ ,
\end{equation}
which is Zeitlin's Euler-Arnold equation on $SU(N)$. It is easy to see that (\ref{Eq:EulerArnoldZeitlinTruncated}) is symbolically parallel to (\ref{Eq:EulerArnoldSDiffTorus:LieBracket}) via the correspondence $\omega \leftrightarrow \Omega$ and $\psi \leftrightarrow \Psi$. Furthermore, it turns out that if $T_N \omega(0)=\Omega(0)$, then for $t$ fixed, we have \begin{equation}\label{Eq:ZeitlinApproximationL2convergence}
\left\|T_N^* \Omega(t)-\omega(t)\right\|_{L^2(\torus^2)}\rightarrow 0
\end{equation}
as $N\rightarrow\infty$ (see \cite{[GallagherZeitlinApproximation]}). This justifies that Zeitlin's Euler-Arnold equation can be viewed as an approximation of the true Euler-Arnold equation (\ref{Eq:EulerArnoldSDiffTorus}) on $SDiff(\torus^2)$.

Zeitlin's approximation can be viewed as a quantization of the group $SDiff(\mathbb{T}^2)$. When the base space changes from $\mathbb{T}^2$ to $\sphere^2$, a similar approximation scheme has been considered using the quantization idea and representation theory of $SO(3)$. We refer to \cite{[ModinViviani2026ARMA]} for details.

\subsection{Casimir-preserving approximation of 2d Euler flows}\label{Sec:Background:CasimirPreserving2dEulerFlows}

For the 2-d Euler equation on $\mathbb{T}^2$, the vorticity equation (\ref{Eq:EulerArnoldSDiffTorus}) makes the vorticity $\omega$ transported by the area-preserving flow. As a consequence, for every smooth function $\gamma$ we have
$
\dfrac{d}{dt}\displaystyle{\int_{\mathbb{T}^2}} \gamma(\omega)dx = 0$.
In particular, the moments
$\displaystyle{\int_{\mathbb{T}^2} \omega^2 dx \ , \  \int_{\mathbb{T}^2} \omega^3 dx \dots}$
are conserved. These form an infinite family of Casimir invariants for the Euler-Arnold dynamics on $SDiff(\mathbb{T}^2)$.

The $SU(N)$ approximation preserves the corresponding finite-dimensional Casimirs. Indeed, for the matrix dynamics (\ref{Eq:EulerArnoldZeitlinTruncated}) we have
$
\dfrac{d}{dt}\operatorname{tr}(\Omega^k)
=
k\,\operatorname{tr}\bigl(\Omega^{k-1}[\Omega,\Psi]\bigr)
=0 \ , \ 
k=2,\dots,N$.
Hence the quantities
$
c_1=\operatorname{tr}(\Omega^2),\ c_2=\operatorname{tr}(\Omega^3),\ \dots,\ c_{N-1}=\operatorname{tr}(\Omega^N)
$
are conserved along the finite-dimensional flow. These are precisely the Casimir invariants of $\mathfrak{su}(N)$. As we will see in Section \ref{Sec:GeometryAlgebraSUN:GeometryAlgebraSUNCoAdjointOrbit}, their common level sets determine the corresponding coadjoint orbits $\mathcal{O}^*(\Lambda)$, which are the leaves of the symplectic foliation of $\su(N)$. 

Thus Zeitlin's $SU(N)$ approximation respects the Casimir structure of the continuous Euler dynamics. Under the above identification and as $N\to\infty$, the discrete invariants $\operatorname{tr}(\Omega^k)$ converge to the vorticity moments $\displaystyle{\int_{\mathbb{T}^2}\omega^k\,dx}$, showing that the approximation is compatible with the coadjoint/Lie-Poisson geometry at both the finite-dimensional and continuum levels.

\section{The geometry and algebra of $SU(N)$ coadjoint orbits}\label{Sec:GeometryAlgebraSUN}

\subsection{Real and orthonormal basis of $\su(N; \mathbb{C})$}\label{Sec:GeometryAlgebraSUN:RealOrthonormalBasisZeitlinApproximation}

By (\ref{Eq:ProjectionTN:ZeitlinTorusCase}), the Zeitlin's basis $L_{k,N}$ defined in (\ref{Eq:BasisInsuNCorrespondtoSVect}) is a projection of the Fourier basis $L_k=e^{ik\cdot x}\in S_0(\mathbb{T}^2; \mathbb{C})\cong S_0\text{Vect}(\mathbb{T}^2;\mathbb{C})$ onto $\su(N;\mathbb{C})$. In parallel with (\ref{Eq:Z2FullSpaceLattice}) and (\ref{Eq:Z2HalfSpaceLattice}), we define the full space lattice 
\begin{equation}\label{Eq:ZeitlinFullSpaceLattice}
\mathcal{K}_N=(\mathbb{Z}\mod N)^2\backslash\{(0,0)\} \ ,
\end{equation}
and the half-space lattice \begin{equation}\label{Eq:ZeitlinHalfSpaceLattice}
\mathcal{K}_N^+=\{k=(k_1, k_2)\in (\mathbb{Z}\mod N)^2\backslash\{(0,0)\}, k_1>0 \text{ or } (k_1=0, k_2>0)\} \ .
\end{equation} 
Then 
\begin{equation}\label{Eq:ZeitlinHalfSpaceLattice:Cardinality}
|\mathcal{K}_N^+| =\dfrac{N^2-1}{2} \ . \end{equation} 
Since $N$ is odd this is an integer.
Comparing with the fact that 
$$\cos(k\cdot x) = \dfrac{1}{2}(e^{ik \cdot x}+e^{-ik\cdot x}) \ , \ \sin(k\cdot x) = \dfrac{1}{2i}(e^{ik \cdot x}-e^{-ik\cdot x}) \ , $$
the analogous $\su(N)$ version of cos and sin Zeitlin basis are defined by 
\begin{equation}\label{Eq:RealSinCosZeitlinBasis}
L_{k,\cos,N}=\dfrac{1}{2}\left(L_{k,N}+L_{-k,N}\right), L_{k,\sin,N}=\dfrac{1}{2i}\left(L_{k,N}-L_{-k,N}\right) \ , \ k \in \mathcal{K}_N^+ \ .
\end{equation}
In fact, the matrices $F$ and $H$ defined as in  (\ref{Eq:ZeitlinBasisFH}) are unitary matrices, so that the matrix $J_{(k_1, k_2)}$ defined in (\ref{Eq:ZeitlinBasisJ}) satisfies $J_{(k_1, k_2)}^* \stackrel{\text{def}}{=} \overline{J_{(k_1, k_2)}^T}=\omega_N^{-k_1\cdot k_2/2}(\overline{H^T})^{k_2}(\overline{F^T})^{k_1} = \omega_N^{-k_1\cdot k_2/2}H^{-k_2}F^{-k_1} = \omega_N^{-k_1\cdot k_2/2}\omega_N^{k_1\cdot k_2}F^{-k_1}H^{-k_2}=\omega_N^{k_1\cdot k_2/2}F^{-k_1}H^{-k_2}=J_{-(k_1, k_2)}$, where we have used (\ref{Eq:ZeitlinIdentitiesHF}). By (\ref{Eq:BasisInsuNCorrespondtoSVect}), this implies that $L_{k,N}^*\stackrel{\text{def}}{=} \overline{L_{k,N}^T}=-L_{-k, N}$, which further implies that $L_{k, \cos, N}, L_{k, \sin, N}\in \su(N)$, i.e., they are ``real" in the sense of lying in $\su(N)$ (as a real Lie algebra). 

Let us introduce the standard $Ad$-invariant inner product on $\mathfrak{su}(N)$,
\begin{equation}\label{Eq:suNinnerproduct}
(A,B)=-\operatorname{tr}(AB) \ , \ A,B\in\mathfrak{su}(N) \ ,
\end{equation}
which, as a pairing operation, allows us to identify $\mathfrak{su}(N)^*$ with $\mathfrak{su}(N)$. The corresponding norm is defined as 
\begin{equation}\label{Eq:suNL2Norm}
\|A\|^2=(A,A)=-\operatorname{tr}(A^2) \ , \ A\in \su(N) \ .
\end{equation} 
This inner product is symmetric, positive-definite and is the unique (up to a constant factor) such bilinear form on $\su(N)$, and it is diagonal under the Zeitlin's basis $L_{k,N}$ introduced in (\ref{Eq:BasisInsuNCorrespondtoSVect}), i.e., 
\begin{equation}\label{Eq:OrthogonalBasisZeitlinInnerProduct}
(L_{k, N}, L_{l, N})=c_N\delta_{k+l, 0} \ ,
\end{equation}
for a constant $c_N>0$ depending only on the normalization of the basis $L_{k,N}$. 

Using (\ref{Eq:OrthogonalBasisZeitlinInnerProduct}), we set the normalized Zeitlin basis
\[
\widehat L_{k,N}=c_N^{-1/2}L_{k,N}
\]
to yield
\[
(\widehat L_{k,N},\widehat L_{l,N})=\delta_{k+l,0} \ .
\]
Similarly as (\ref{Eq:RealSinCosZeitlinBasis}), the ``real" orthonormal basis in $\su(N)$ are defined as
\begin{equation}\label{Eq:BasisInsuNOrthonormal}
\widehat{L}_{k,\cos,N}=\frac{1}{\sqrt{2}}(\widehat L_{k,N}+\widehat L_{-k,N}) \ , \ \widehat{L}_{k,\sin,N}=\frac{1}{i\sqrt{2}}(\widehat L_{k,N}-\widehat L_{-k,N}) \ , \ k\in \mathcal{K}_N^+  \ ,
\end{equation}
such that
\begin{equation}\label{Eq:OrthonormalBasisObtainedFromZeitlinsuNBasis}
(\widehat{L}_{k,\cos, N}, \widehat{L}_{l,\cos,N})=\delta_{kl} \ , \ (\widehat{L}_{k,\sin,N},\widehat{L}_{l,\sin, N})=\delta_{kl} \ , \ 
(\widehat{L}_{k,\cos, N},\widehat{L}_{l,\sin, N})=0 \ .
\end{equation}

Let \(\mathcal Z^{(N)}=\{Z_{p}\}_{p=1}^{N^2-1}\) denote the above orthonormal Zeitlin basis (\ref{Eq:BasisInsuNOrthonormal}), which includes $\dfrac{N^2-1}{2}$ number of $\widehat{L}_{k, \cos, N}$'s and $\dfrac{N^2-1}{2}$ number of $\widehat{L}_{k, \sin, N}$'s. 
Thus every \(\Omega\in\mathfrak{su}(N)\) can be written as
\begin{equation}\label{Eq:UniqueCoordinateRepresentationOrthonormalZeitlinBasis}
\Omega=\sum_{p=1}^{N^2-1}
\omega_p Z_p \ .
\end{equation}

\subsection{$SU(N)$ coadjoint orbits}\label{Sec:GeometryAlgebraSUN:GeometryAlgebraSUNCoAdjointOrbit}

Using (\ref{Eq:EulerArnoldEquationCoadjointOperator}), we have, for $X, Y, \Omega\in \su(N)$, that
$(ad^*_X \Omega, Y) =(\Omega, [Y, X])
=(\Omega, YX-XY)
=(\Omega, YX) - (\Omega, XY)
= -\operatorname{tr}(\Omega YX)+\operatorname{tr}(\Omega XY)
= -\operatorname{tr}(X\Omega Y)+\operatorname{tr}(\Omega XY)
= (X\Omega, Y)-(\Omega X, Y)
= ([X, \Omega], Y)$,
which gives
\begin{equation}\label{Eq:EulerArnoldEquationCoadjointOperator:suN}
ad^*_X\Omega =[X, \Omega] \ , \ X, \Omega\in \su(N) \ .
\end{equation}

By (\ref{Eq:RelationAdStaradstar}) we have, for $\Omega\in \su(N)$, that
$\displaystyle{\left.\frac{d}{dt}\right|_{t=0}}Ad^*_{\exp(tX)}\Omega=ad_X^*\Omega=[X,\Omega]$,
which gives
$
Ad_{\exp(tX)}^*\Omega=e^{tX}\Omega e^{-tX}$.
Since \(SU(N)\) is connected, this then gives
\begin{equation}\label{Eq:EulerArnoldEquationCoAdjointOperator:suN}
Ad^*_g \Omega = g \Omega g^{-1} \ , \ g\in SU(N) \ , \ \Omega \in \su(N) \ .
\end{equation}
We note that by (\ref{Eq:AdjointRepresentation}) we also have 
\begin{equation}\label{Eq:EulerArnoldEquationAdjointOperator:suN}
Ad_g \Omega = g \Omega g^{-1} \ , \ g\in SU(N) \ , \ \Omega \in \su(N) \ ,
\end{equation}
so that $Ad_g \Omega = Ad^*_g \Omega$.

Thus the coadjoint orbit on $\su(N)^*\cong \su(N)$ through \(\Omega\in\mathfrak{su}(N)\) is
\begin{equation}\label{Eq:CoAdjointOrbitsuN}
\mathcal O^*(\Omega)
=
\{g\Omega g^{-1}:g\in SU(N)\}.
\end{equation}
This is a smooth embedded submanifold of \(\mathfrak{su}(N)\). Since \(SU(N)\) is compact, the orbit $\mathcal O^*(\Omega)$ is also compact. The stabilizer subgroup of \(\Omega\) is
$\mathrm{Stab}(\Omega)
=
\{g\in SU(N):g\Omega g^{-1}=\Omega\}$,
and the coadjoint orbit can be identified with the homogeneous space
$\mathcal{O}^*(\Omega)
\simeq
SU(N)/\mathrm{Stab}(\Omega)$.

Since $\Omega\in \su(N)$ is skew-Hermitian $\Omega^*=-\Omega$, by the spectral decomposition theorem, there exists $g_0\in SU(N)$ and $\lambda_1,...,\lambda_N\in \mathbb{R}$ such that $\lambda_1+...+\lambda_N=0$ and $\Omega = g_0 \Lambda g_0^{-1}$, where $\Lambda = \text{diag}(i\lambda_1, ..., i\lambda_N)$. Furthermore, $\mathrm{Stab}(\Omega)=g_0\text{Stab}(\Lambda)g_0^{-1}$. This gives further that
\begin{equation}\label{Eq:CoAdjointOrbitsuNIsospectral}
\mathcal{O}^*(\Omega)=\mathcal{O}^*(\Lambda)=\{g\Lambda g^{-1}: g\in SU(N)\}\simeq SU(N)/\text{Stab}(\Lambda) \ .
\end{equation}

Assume $\Lambda = \text{diag}(i\lambda_1, ..., i\lambda_N)$ is such that different eigenvalues have multiplicities $m_1,...,m_k$, with $m_1+...+m_k=N$. Then 
\begin{equation}\label{Eq:StabilizerDiagonalSUNMatrix}
\text{Stab}(\Lambda)=S(U(m_1)\times...\times U(m_k))=
\left\{(A_1,...,A_k): A_j\in U(m_j) \ , \  \prod_{j=1}^k \det(A_j)=1 \right\} \ .
\end{equation}
This means 
\begin{equation}\label{Eq:SUNCoAdjointOrbitAsSUNDividedByStabilizerDiagonal}
\mathcal{O}^*(\Lambda)\simeq SU(N)/S(U(m_1)\times...\times U(m_k)) \ .
\end{equation}
If all eigenvalues are distinct ($m_1=...=m_N=1$), then from the above we have $\text{Stab}(\Lambda)=S(U(1)^N)\simeq T^{N-1}$ is the maximal torus of $SU(N)$, and correspondingly $\dim \mathcal{O}^*(\Lambda)=N^2-N$. In general, with eigenvalue multiplicities $m_1,...,m_k$, $m_1+...+m_k=N$, we have \begin{equation}\label{Eq:DimensionCoAdjointOrbitSUN}
\dim \mathcal{O}^*(\Lambda)=N^2-\sum\limits_{j=1}^k m_j^2=2\sum\limits_{1\leq i<j\leq k}m_im_j \ .
\end{equation}

If two diagonal matrices $\Lambda=\text{diag}(i\lambda_1,...,i\lambda_N)$ and $\widetilde{\Lambda}=\text{diag}(i\widetilde{\lambda}_1,...,i\widetilde{\lambda}_N)$ in $\su(N)$ are on the same coadjoint orbit $\mathcal{O}^*(\Lambda)$, then by (\ref{Eq:CoAdjointOrbitsuNIsospectral}) there exists some $g\in SU(N)$ such that $\widetilde{\Lambda}=g\Lambda g^{-1}$. Since unitary conjugation keeps spectrum, we must have the equality of multisets $\{\lambda_1,...,\lambda_N\}=\{\widetilde{\lambda}_1,...,\widetilde{\lambda}_N\}$. Conversely, if these two multisets agree, then there exists a permutation matrix, up to a harmless phase adjustment to make determinant one, which conjugates $\Lambda$ to 
$\widetilde{\Lambda}$. Hence the two matrices lie on the same coadjoint orbit. Therefore
\begin{equation}\label{Eq:CoAdjointOrbitUniquelyDeterminedBySpectrum}
\mathcal{O}^*(\Lambda)=\mathcal{O}^*(\widetilde{\Lambda}) \Longleftrightarrow \{\lambda_1,...,\lambda_N\}=\{\widetilde{\lambda}_1,...,\widetilde{\lambda}_N\} \text{ as multisets} .
\end{equation}
This means that the coadjoint orbit $\mathcal{O}^*(\Omega)$ is precisely the isospectral manifold associated with $\Omega\in \su(N)$.

Using (\ref{Eq:CoAdjointOrbitsuN}), a tangent vector to $\mathcal{O}^*(\Omega)$ at $\widetilde{\Omega}\in \mathcal{O}^*(\Omega)$ can be calculated as
$\displaystyle{\left.\dfrac{d}{dt}\right|_{t=0}} e^{tX}\widetilde{\Omega} e^{-tX}=X\widetilde{\Omega}-\widetilde{\Omega} X=[X, \widetilde{\Omega}]$, $X\in \su(N)$.
Thus the tangent space of the orbit $\mathcal{O}^*(\Omega)$ at $\widetilde{\Omega}$ is given by
\begin{equation}\label{Eq:CoAdjointOrbitTangetSpace}
T_{\widetilde{\Omega}}\mathcal{O}^*(\Omega)
=
\{[X,\widetilde{\Omega}]:X\in\mathfrak{su}(N)\} \ .
\end{equation}

Combining (\ref{Eq:KKSForm}) and (\ref{Eq:EulerArnoldEquationCoadjointOperator:suN}), the KKS symplectic form on $\mathcal{O}^*(\Omega)$ in this case is given by
\begin{equation}\label{Eq:KKSFormsuN}
\omega_{\mathrm{KKS}}([X,\Omega],[Y,\Omega])
=
(\Omega,[X,Y])=-\text{tr}(\Omega [X, Y]) \ , \ X, Y, \Omega \in \su(N) \ .
\end{equation}
The corresponding Liouville measure associated with this KKS form is defined as in (\ref{Eq:LiouvilleMeasureKKSForm}), which, after normalization,
is the same as the Haar-induced orbital measure on $\mathcal{O}^*(\Lambda)$.

By (\ref{Eq:HamiltonianEqViaKirillovForm}), for a Hamiltonian \(\mathscr{H}:\mathfrak{su}(N)^*\to \mathbb R\), the Hamiltonian vector field $X_{\mathscr{H}}=[A, \Omega]$ satisfies
$\omega_{\text{KKS}}([Y, \Omega], [A, \Omega])= (\Omega, [Y, A])=(d\mathscr{H}, [Y, \Omega])=(\Omega, [\nabla \mathscr{H}, Y])$, where $\nabla \mathscr{H}$ is defined via $\left.\dfrac{d}{d\varepsilon}\right|_{\varepsilon=0}\mathscr{H}(\Omega + \varepsilon \delta_\Omega)=(\nabla \mathscr{H}(\Omega), \delta_ \Omega)$. This gives $A=-\nabla \mathscr{H}(\Omega)$ and the Hamiltonian vector field 
\begin{equation}\label{Eq:HamiltonianVectorFieldExplicitFormCoAdjointOrbitSUN}
X_{\mathscr{H}}=[\Omega, \nabla \mathscr{H}(\Omega)] \ .
\end{equation}

The coadjoint orbits provide a natural (stratified) symplectic foliation (see \cite[Section 29]{[NovikovModernGeometryII]}) of $\mathfrak{su}(N)$ with its decomposition into disjoint isospectral manifolds
\begin{equation}\label{Eq:suNCoAdjointFoliation}
\mathfrak{su}(N)
=
\bigcup_{\Lambda}
\mathcal O^*(\Lambda) \ ,
\end{equation}
where $\Lambda =\mathrm{diag}(i\lambda_1,...,i\lambda_N)$ ranges over diagonal
skew-Hermitian matrices with trace zero, modulo permutation of the eigenvalues. Each leaf in this foliation, which is a coadjoint orbit, can be parametrized by the multiset $\{\lambda_1,...,\lambda_N\}$. We note that the foliation is not a globally smooth
foliation, but rather a stratification by orbit type. On each stratum with fixed
eigenvalue multiplicity, the coadjoint orbits form a locally smooth invariant
foliation. The singular strata correspond to eigenvalue collisions. 

On a regular stratum with simple spectrum, one may work
locally using coordinates $(y,\boldsymbol{\lambda})$, where $y$ denotes the position along the
coadjoint leaf and $\boldsymbol{\lambda}=(\lambda_1,...,\lambda_N)$ denotes the transverse spectral variables. 
The collection of coadjoint orbit parameters $\boldsymbol{\lambda}$ can be described as the set
$$\mathfrak{L}=\left\{\lambda_1\geq \lambda_2\geq ...\geq \lambda_N \ , \ \sum\limits_{i=1}^N \lambda_i = 0\right\}\subset \mathbb{R}^N \ .$$
This is a closed set of conic shape. The eigenvalue parameters of the regular strata correspond to the relative interior of this set where all eigenvalues are distinct, while the singular strata correspond to points on the boundary of this set where eigenvalues may collide.

An equivalent parameterization of the coadjoint orbit $\mathcal{O}^*(\Omega)=\mathcal{O}^*(\Lambda)$, where $\Lambda =\text{diag}(i\lambda_1,...,i\lambda_N)$ is to use the power sums \begin{equation}\label{Eq:suNCoAdjointFoliationCasimirFunctions} c_{k-1}=c_{k-1}(\Omega)\stackrel{\text{def}}{=} \operatorname{tr}(\Omega^k)=\operatorname{tr}(\Lambda^k)=\sum\limits_{j=1}^N (i\lambda_j)^k
\end{equation} 
for $k=2,...,N$ (in the case $k=1$ we have $\operatorname{tr}(\Lambda)=0$), which are the Casimir invariants (see Section \ref{Sec:Background:CasimirPreserving2dEulerFlows}). By the Newton's identities (see \cite{[WikiNewtonIdentities]}), the power sums $(c_1,....,c_{N-1})$ uniquely determine the characteristic polynomial $p(x)=\det(\Omega-xI)$, and thus the multiset of eigenvalue parameters $\{\lambda_1,...,\lambda_N\}$. Based on this observation, the foliation (\ref{Eq:suNCoAdjointFoliation}) can be equivalently written as \begin{equation}\label{Eq:suNCoAdjointFoliationPowerSums} 
\su(N)=\bigcup_{z\in \mathcal{A}}\mathcal{F}_z
\end{equation}
where each foliation parameter $z\in \mathcal{A}$ can be identified as the Casimir variables $z=(c_1,...,c_{N-1})$. In this way the coadjoint orbit $\mathcal{O}^*(\Lambda)$ can be written in the form of (\ref{Intro:Eq:CoadjointOrbitZeitlinEulerArnold}), with $\mathcal{F}_z=\mathcal{O}^*(z)$. The discriminant (Vandermonde determinant) square $\Delta^2(\lambda_1,...,\lambda_N)=\prod\limits_{1\leq i <j\leq N}(\lambda_i-\lambda_j)^2$ can be written as a polynomial (discriminant locus) $D(c_1,...,c_{N-1})$ of $(c_1,...,c_{N-1})$. Thus the singular strata of this foliation where eigenvalues collide can be described by the hypersurface \begin{equation}\label{Eq:SingularStataCoAdjointFoliationsuNDiscriminantHypersurface}
\mathcal{A}_{\text{singular}}\stackrel{\text{def}}{=} \{z=(c_1,...,c_{N-1})\in \mathcal{A}: D(z)=0\} \ .
\end{equation}
Away from this hypersurface, any other foliation parameter 
\begin{equation}\label{Eq:RegularStataCoAdjointFoliationsuNDiscriminantHypersurface}
z\in \mathcal{A}_{\text{regular}}\stackrel{\text{def}}{=} \mathcal{A}\backslash \mathcal{A}_{\text{singular}}
\end{equation} corresponds to a regular stratum, that is some coadjoint orbit $\mathcal{O}^*(\Lambda)$ with $\Lambda$ having simple spectrum. We introduce the projection map $\pi: \su(N)\rightarrow \mathcal{A}$ such that 
\begin{equation}\label{Eq:suNCoAdjointFoliationProjectionMap}
\pi(x) = z = (c_1,...,c_{N-1})\in \mathcal{A} \ , \ x\in \su(N) \ .
\end{equation}
Equivalently, any point $x\in \su(N)$ can be defined by the pair 
\begin{equation}\label{Eq:suNCoadjointFoliationLeafDecomposition}
(y, z)
\end{equation}
such that $z\in \mathcal{A}$ and $y\in \mathcal{O}^*(z)$. This decomposition forms a well-defined local coordinate of $x\in \su(N)$ when $z\in \mathcal{A}_{\text{regular}}$, that is within a small neighborhood of a fixed regular stratum of the $\mathfrak{su}(N)$ foliation (\ref{Eq:suNCoAdjointFoliationPowerSums}), where the coadjoint orbits form a smooth foliation of constant dimension.  Under this local coordinate we have \begin{equation}\label{Eq:suNCoAdjointFoliationProjectionMapInverse}
\pi^{-1}(z)=\{(y, z): y\in \mathcal{O}^*(z)\} \cong \mathcal{O}^*(z) \ , \ z\in \mathcal{A}_{\text{regular}} \ .
\end{equation}

\section{The Euler-Arnold equation on $SU(N)$ with fast-slow stochastic dynamics and its fluxes: an averaging framework}\label{Sec:AveragingFramework}

\subsection{Euler-Arnold equation on $SU(N)$ with fast-slow stochastic dynamics}\label{Sec:AveragingFramework:EulerArnoldMultiscale}

We now introduce specific noise structure that fits the scheme  (\ref{Intro:Eq:EulerArnoldZeitlinTruncatedDissipatedForcedWithFastNoise}). As in (\ref{Intro:Eq:TangentialFastStochastic}), we split the fast stochastic forcing term in (\ref{Intro:Eq:EulerArnoldZeitlinTruncatedDissipatedForcedWithFastNoise}) into $2$ parts: the $X$ part is tangent to the coadjoint orbit and the $Y$ part is multiplied by $\varepsilon\rho$ to provide transversal perturbations to the coadjoint orbit. In addition to the terms introduced in the scheme (\ref{Intro:Eq:EulerArnoldZeitlinTruncatedDissipatedForcedWithFastNoise}), we also add a non-degenerate background noise scaled with parameter $\delta$ such that $0<\delta<\!\!<1$, acting in all directions of $\su(N)$, whose role is to ensure, when combined with the dissipation and the damping term, irreducibility and the existence of
a unique invariant measure (see Theorem \ref{Thm:ErgodicityEulerArnoldZeitlinTruncatedDissipatedForcedWithFastNoise} below). 

Using the symbol conventions in (\ref{Intro:Eq:EulerArnoldZeitlinTruncatedDissipatedForcedWithFastNoise}), the system is then written as
\begin{equation}\label{Eq:EulerArnoldZeitlinTruncatedDissipatedForcedWithFastNoise:Specific}
d\Omega_t^{\varepsilon, \delta} = \dfrac{1}{\varepsilon}\underbrace{\left(dX(\Omega_t^{\varepsilon, \delta})+\varepsilon\rho dY(\Omega_t^{\varepsilon,\delta}) \right)}_{\text{fast stochastic forcing } \mathscr{F}(\Omega^{\varepsilon,\delta}_t)dt} + B(\Omega_t^{\varepsilon, \delta})dt + f(t)dt + \delta d\mathfrak{b}_t \ ,
\end{equation}
where 
\begin{itemize}
\item[(a)] The fast stochastic forcing is given by the following stochastic dynamics
\begin{equation}\label{Eq:EulerArnoldZeitlinTruncatedDissipatedForcedWithFastNoise:FastPart}
\mathscr{F}(\Omega)dt=\underbrace{\sum\limits_{j=1}^m X_j(\Omega)\circ dW_t^{(j)}}_{dX(\Omega)} + \varepsilon \rho \underbrace{\sum\limits_{k=1}^l Y_k(\Omega)\circ dB_t^{(k)}}_{dY(\Omega)} \ ,
\end{equation} 
which is driven by smooth vector fields $X_j(\Omega), j=1,...,m$ and $Y_k(\Omega), k=1,...,l$ on $\su(N)$, written in Stratonovich integral form, with $W_t^{(j)}, j=1,...,m$, $B_t^{(k)}, k=1,...,l$ being independent 1-d Brownian motions. The vector fields $X_j(\Omega), Y_k(\Omega)$ and the parameter $\rho \geq 0$ are subject to conditions in Assumption \ref{Assumption:FastVectorEnergyPreservingPerturbation} below;
\item[(b)] The background noise $d\mathfrak{b}_t$ is defined through the ``full-$\su(N)$" Brownian process
\begin{equation}\label{Eq:EulerArnoldZeitlinTruncatedDissipatedForcedWithFastNoise:BackGroundPart}
\mathfrak{b}_t=\sum\limits_{k\in \mathcal{K}_N^+} \left[L_{k, \cos, N}\widetilde{W}_t^{(k, \cos)}+L_{k, \sin, N}\widetilde{W}_t^{(k, \sin)}\right] \ ,
\end{equation}
where $\mathcal{K}_N^+$ is defined as (\ref{Eq:ZeitlinHalfSpaceLattice}), the real basis $L_{k, \cos, N}$ and $L_{k, \sin, N}$ are defined in (\ref{Eq:RealSinCosZeitlinBasis}). The $1$-d Brownian motions $\widetilde{W}_t^{(k, \cos)}$ and $\widetilde{W}_t^{(k, \sin)}$ are independent and independent of the $W_t^{(j)}, B_t^{(k)}$'s defined in (\ref{Eq:EulerArnoldZeitlinTruncatedDissipatedForcedWithFastNoise:FastPart}). It is clear, by our discussion in Section \ref{Sec:GeometryAlgebraSUN:RealOrthonormalBasisZeitlinApproximation}, that $\mathfrak{b}_t\in \su(N)$;

\item[(c)] The term 
\begin{equation}\label{Eq:EulerArnoldZeitlinTruncatedDissipatedForcedWithFastNoise:B}
B(\Omega)=[\Psi, \Omega]-\alpha \Omega + \nu(-A)\Omega \ , \  \Psi = -A^{-1}\Omega \ ,
\end{equation}
with the operator $A$ defined as in (\ref{Eq:InertiaOperatorZeitlinCase}). Assume $\Omega\in \su(N)$, then (\ref{Eq:InertiaOperatorZeitlinCase}) implies that the operator $A$ preserves \(\mathfrak{su}(N)\) and \(\Psi=-A^{-1}\Omega\in\mathfrak{su}(N)\). Since the commutator of two
anti-Hermitian matrices is again anti-Hermitian, we have $[\Psi, \Omega]\in \su(N)$, and the linear terms
\(-\alpha\Omega\) and \(\nu(-A)\Omega\) also lie in \(\mathfrak{su}(N)\).
Thus when $\Omega\in \su(N)$ we have \(B(\Omega)\in\mathfrak{su}(N)\);

\item[(d)] In parallel with (\ref{Eq:2d-ns-damped-forced:forcing}), we set the external stochastic forcing term
\begin{equation}\label{Eq:EulerArnoldZeitlinTruncatedDissipatedForcedWithFastNoise:Forcing}
f(t)=\sum\limits_{k\in \mathcal{K}_N^+, |k| \approx k_f}q_k \left[ 
L_{k, \cos, N} \dot{\beta}^{(k, \cos)}_t
+
L_{k, \sin, N} \dot{\beta}^{(k, \sin)}_t
\right] \ ,
\end{equation}
with a set of independent $1$-d Brownian motions $\beta_t^{(k, \cos)}, \beta_t^{(k, \sin)}$, which are also independent of the Brownian motions introduced in (\ref{Eq:EulerArnoldZeitlinTruncatedDissipatedForcedWithFastNoise:FastPart}) and (\ref{Eq:EulerArnoldZeitlinTruncatedDissipatedForcedWithFastNoise:BackGroundPart}). The quantities $q_k\in \mathbb{R}$ are forcing amplitudes, with $0\leq \sum\limits_{k\in \mathcal{K}_N^+, |k|\approx k_f}q_k^2<\infty$ satisfied automatically for each finite $N$. By our discussions in Section \ref{Sec:GeometryAlgebraSUN:RealOrthonormalBasisZeitlinApproximation}, it is also clear that $f(t)\in \su(N)$.
\end{itemize}

The above conditions ensure existence and uniqueness of a solution $\Omega_t^{\varepsilon,\delta}$ to (\ref{Eq:EulerArnoldZeitlinTruncatedDissipatedForcedWithFastNoise:Specific}). It also turns out that, since every $X_j(\Omega), Y_k(\Omega)$ are in $\su(N)$, when the initial condition $\Omega_0^{\varepsilon, \delta}\in \su(N)$, it is guaranteed that $\Omega_t^{\varepsilon, \delta}\in \su(N)$ for all $t\geq 0$. 

For the generating directions $X_j(\Omega), j=1,...,m$, $Y_k(\Omega), k=1,...,l$ of the fast stochastic forcing in (\ref{Eq:EulerArnoldZeitlinTruncatedDissipatedForcedWithFastNoise:FastPart}), as we have indicated in item (a) above, we put the following assumption:

\begin{assumption}[Fast stochastic forcing within the perturbative regime]\label{Assumption:FastVectorEnergyPreservingPerturbation}
We make the following assumptions on the $X$ and $Y$ terms in the fast stochastic forcing defined in \emph{(\ref{Eq:EulerArnoldZeitlinTruncatedDissipatedForcedWithFastNoise:FastPart})}:

\begin{enumerate}
\item[\emph{(1)}] Each $X_j(\Omega)$ is a smooth vector field on $\su(N)$ that is tangent to the coadjoint orbit, so that it preserves the second Casimir $c_1(\Omega)=-(\Omega, \Omega)=\mathrm{tr}(\Omega^2)$ (see \emph{(\ref{Eq:suNinnerproduct})}), i.e., 
\begin{equation}\label{Assumption:FastVectorEnergyPreservingPerturbation:Eq:TangentialVector}
(\Omega, X_j(\Omega))=0 \ .
\end{equation}

\item[\emph{(2)}] Each $Y_k(\Omega)$ is a smooth vector field not identically tangent to the coadjoint orbit, so it is possible that
\begin{equation}\label{Assumption:FastVectorEnergyPreservingPerturbation:Eq:PerturbationVector}
(\Omega, Y_k(\Omega))\not \equiv 0 \ , \ \text{ for at least one } k \ .
\end{equation}

\item[\emph{(3)}] The Frech\'{e}t derivative $DY_k$, defined as $[DY_k(\Omega)](\Theta)=\left.\dfrac{d}{ds}\right|_{s=0} Y_k(\Omega+s\Theta)$,  has bounded norm \begin{equation}\label{Assumption:FastVectorEnergyPreservingPerturbation:Eq:BoundedFirstDerivativePerturbative}
\|DY_k(\Omega)\|\leq C \ ,
\end{equation}
for all $k=1,...,l$ and some $C>0$.

\item[\emph{(4)}] The perturbation parameter $\rho\geq 0$ satisfies \begin{equation}\label{Assumption:FastVectorEnergyPreservingPerturbation:Eq:PerturbationParameter}
0\leq \rho\leq K \ ,
\end{equation}
where the constant $K$ satisfies
\begin{equation}\label{Assumption:FastVectorEnergyPreservingPerturbation:Eq:PerturbationParameter:ConstraintK}
0<K<\left[\dfrac{2(\alpha+\nu)}{l\left(2C^2+\dfrac{1}{2}C(1+2C^2)\right)}\right]^{1/2} \ ,
\end{equation}
with the constant $C>0$ as defined in \emph{(\ref{Assumption:FastVectorEnergyPreservingPerturbation:Eq:BoundedFirstDerivativePerturbative})}.
\end{enumerate}
\end{assumption}

It is clear that, by (\ref{Assumption:FastVectorEnergyPreservingPerturbation:Eq:BoundedFirstDerivativePerturbative}), that we have the growth constraint \begin{equation}\label{Assumption:FastVectorEnergyPreservingPerturbation:Eq:GrowthCondition}
\|Y_k(\Omega)\|\leq C(1+\|\Omega\|) \ ,
\end{equation}
for all $k=1,...,l$ and the same constant $C>0$ as in (\ref{Assumption:FastVectorEnergyPreservingPerturbation:Eq:BoundedFirstDerivativePerturbative}).

\begin{remark}[Unique ergodicity only needs $X$ to preserve the second Casimir]\label{Rmk:Assumption:UniqueErgodicityOnlyNeedsSecondCasimir}
For the unique ergodicity to hold as in \emph{Theorem \ref{Thm:ErgodicityEulerArnoldZeitlinTruncatedDissipatedForcedWithFastNoise}} below, $X_j$ only needs to conserve the second Casimir $c_1(\Omega)$, instead of the full coadjoint orbit, i.e., all Casimirs $c_1(\Omega), ..., c_{N-1}(\Omega)$. 
\end{remark}

\begin{theorem}[Unique ergodicity]\label{Thm:ErgodicityEulerArnoldZeitlinTruncatedDissipatedForcedWithFastNoise}
Assume that $\delta, \alpha>0$. Then, for every fixed $\varepsilon>0$ and $N\in \mathbb{N}$, 
 the process $\Omega^{\varepsilon,\delta}_t$ defined in \emph{(\ref{Eq:EulerArnoldZeitlinTruncatedDissipatedForcedWithFastNoise:Specific})} admits a unique invariant probability measure
$\mu^{(N)}_{\varepsilon,\delta, \alpha , \nu}$ on $\su(N)$.
\end{theorem}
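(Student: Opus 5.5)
The natural route is the standard Krylov--Bogoliubov plus strong Feller/irreducibility (Doob--Khasminskii) argument on the finite-dimensional space $\su(N)\cong\mathbb{R}^{N^2-1}$.

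\textbf{Existence via a Lyapunov function.} The first step is a Lyapunov estimate for $V(\Omega)=\|\Omega\|^2=-\operatorname{tr}(\Omega^2)$. I would convert the Stratonovich fast forcing in (\ref{Eq:EulerArnoldZeitlinTruncatedDissipatedForcedWithFastNoise:FastPart}) to It\^o form and apply It\^o's formula to $V(\Omega_t^{\varepsilon,\delta})$, handling the terms as follows.
\begin{itemize}
\item The Euler--Arnold term contributes $(\Omega,[\Psi,\Omega])=0$ by $Ad$-invariance of the inner product (\ref{Eq:suNinnerproduct}).
\item The linear terms contribute $-2\alpha\|\Omega\|^2-2\nu(\Omega,A\Omega)\le -2(\alpha+\nu)\|\Omega\|^2$, since $|k|^2\ge1$.
\item The forcing $f$ and the background noise $\delta\,d\mathfrak{b}_t$ give constant It\^o corrections $\sum q_k^2\|L_{k,\cdot,N}\|^2+\delta^2\sum\|L_{k,\cdot,N}\|^2$.
\item For the $X_j$, the Stratonovich differential of $V$ along $X_j$ is $2(\Omega,X_j)\circ dW=0$ by (\ref{Assumption:FastVectorEnergyPreservingPerturbation:Eq:TangentialVector}). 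Since Stratonovich calculus obeys the chain rule, $V$ is exactly conserved by the $X$-part, and no $1/\varepsilon^2$ correction survives in $dV$. This is why Remark \ref{Rmk:Assumption:UniqueErgodicityOnlyNeedsSecondCasimir} only needs conservation of $c_1$.
\item The $Y_k$ enter with amplitude $\rho$ (the $\varepsilon$ cancels against $1/\varepsilon$). Their Stratonovich-to-It\^o correction is $\tfrac{\rho^2}{2}\sum_k[2\|Y_k\|^2+2(\Omega,DY_k\,Y_k)]$. Using (\ref{Assumption:FastVectorEnergyPreservingPerturbation:Eq:BoundedFirstDerivativePerturbative}) and (\ref{Assumption:FastVectorEnergyPreservingPerturbation:Eq:GrowthCondition}), this is bounded by $\rho^2 l\,(2C^2+\tfrac12C(1+2C^2))\|\Omega\|^2+\mathrm{const}$.
\end{itemize}
The constraint (\ref{Assumption:FastVectorEnergyPreservingPerturbation:Eq:PerturbationParameter:ConstraintK}) is then exactly what yields
\[
\mathcal{L}V\le -\gamma V+C',\qquad \gamma>0 .
\]
This also gives non-explosion, so the solution is global. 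Consequently $\sup_t\mathbf{E}\|\Omega_t\|^2<\infty$, the time-averaged laws are tight, and Krylov--Bogoliubov produces an invariant probability measure.

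\textbf{Uniqueness via strong Feller and irreducibility.} Because $\delta>0$ and $\{L_{k,\cos,N},L_{k,\sin,N}\}$ span $\su(N)$, the diffusion matrix is bounded below by $\delta^2$ times a positive-definite constant matrix. The It\^o generator is therefore uniformly elliptic with smooth (locally Lipschitz, polynomially growing) coefficients. Standard results then give that the transition kernel $P_t(\Omega,\cdot)$ has a smooth, everywhere positive density for $t>0$: positivity follows from the Stroock--Varadhan support theorem, since the controlled ODE with additive full-rank controls can steer any point to any open set.

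This gives both the strong Feller property and topological irreducibility. Doob's theorem then implies that all invariant measures are mutually equivalent and ergodic, and hence that there is at most one. Combined with existence, the measure $\mu^{(N)}_{\varepsilon,\delta,\alpha,\nu}$ is unique.

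\textbf{Main obstacle.} The delicate step is the Lyapunov computation. Two things must be checked carefully. First, the $1/\varepsilon$ Stratonovich $X$-terms must produce no uncontrolled $\varepsilon^{-2}$ drift in $dV$, even though the It\^o drift $\tfrac{1}{2\varepsilon^2}\sum DX_jX_j$ of $\Omega$ itself is large; this is resolved by applying the chain rule directly in Stratonovich form. Second, the quadratic growth of the $Y$-correction must be absorbed by $2(\alpha+\nu)$, and this is precisely where the constant $K$ is calibrated. Positivity of the transition density, by contrast, should be routine once ellipticity is available, because the drift is only locally Lipschitz but non-explosive by the Lyapunov bound.
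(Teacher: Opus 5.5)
Your proposal is correct and follows the paper's proof essentially step for step. Both use the Lyapunov function $\|\Omega\|^2$ (the paper uses $1+\|\Omega\|^2$), kill the $X$-terms via $(\Omega,X_j)=0$, and bound the $Y$ Stratonovich-to-It\^o correction by $\rho^2 l\,(2C^2+\tfrac12 C(1+2C^2))(1+\|\Omega\|^2)$ so that the constraint on $K$ gives the Krylov--Bogoliubov estimate, and both get uniqueness from strong Feller plus irreducibility supplied by the nondegenerate $\delta$-noise. Your explicit non-explosion argument and the support-theorem/Doob justification of uniqueness only fill in steps that the paper asserts or cites from Mattingly's notes.
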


\begin{proof}
We use the framework of \cite{[NathanUniqueErgodicityNotes]}. The equation (\ref{Eq:EulerArnoldZeitlinTruncatedDissipatedForcedWithFastNoise:Specific}) is a finite-dimensional diffusion on the real vector space
\(\mathfrak{su}(N)\). By construction, the background noise driven by $\mathfrak{b}_t$ is additive and
non-degenerate, acting on all directions of $\su(N)$.
Hence the diffusion matrix is uniformly elliptic for every fixed
\(\delta>0\). In particular, the associated Markov semigroup is weakly irreducible and strong Feller
 on \(\mathfrak{su}(N)\). So by \cite[Theorem 4.1]{[NathanUniqueErgodicityNotes]}, we have uniqueness of invariant measure. 

It remains to prove existence of invariant measure through a Krylov-Bogoliubov type argument (see \cite[Section 3]{[NathanUniqueErgodicityNotes]}). To this end we define the Lyapunov function
\begin{equation}\label{Thm:ErgodicityEulerArnoldZeitlinTruncatedDissipatedForcedWithFastNoise:Eq:LyapunovFunction}
V(\Omega)=1+\|\Omega\|^2 \ ,
\end{equation}
so that 
\begin{equation}\label{Thm:ErgodicityEulerArnoldZeitlinTruncatedDissipatedForcedWithFastNoise:Eq:LyapunovFunctionGradient}
\nabla V(\Omega)=2 \Omega \ , \ [D^2 V(\Omega)](\Theta, \Theta)=2\|\Theta\|^2 \ , \ \Omega, \Theta \in \su(N) \ ,
\end{equation}
where $[DV(\Omega)](\Theta)=(\nabla V (\Omega), \Theta)=\left.\dfrac{d}{dt}\right|_{t=0} V(\Omega+t\Theta)$ is the Fr\'{e}chet derivative of $V(\Omega)$; the gradient is with respect to the inner product $(\bullet, \bullet)$ on $\su(N)$, as defined in (\ref{Eq:suNinnerproduct}); and $[D^2V(\Omega)](\Theta_1, \Theta_2)=\left.\dfrac{\partial^2}{\partial s \partial t}\right|_{s=t=0} V(\Omega+s\Theta_1+t\Theta_2)$ is the Hessian of $V(\Omega)$. Using (\ref{Eq:InertiaOperatorZeitlinCase}) we have, for any $\Omega\in \su(N)$, 
\begin{equation}\label{Thm:ErgodicityEulerArnoldZeitlinTruncatedDissipatedForcedWithFastNoise:Eq:DampingDissipationL2Estimate}
(-\alpha\Omega+\nu(-A)\Omega,\Omega)
\leq -c_{\alpha,\nu}(\Omega, \Omega)
\end{equation}
where the constant $c_{\alpha,\nu}>0$ can be taken as \begin{equation}\label{Thm:ErgodicityEulerArnoldZeitlinTruncatedDissipatedForcedWithFastNoise:Eq:DampingDissipationL2Estimatecalphanu}
c_{\alpha, \nu}=\alpha+\nu \ .
\end{equation} Moreover, the commutator term
\([\Psi,\Omega]\) is energy-preserving, i.e., \begin{equation}\label{Thm:ErgodicityEulerArnoldZeitlinTruncatedDissipatedForcedWithFastNoise:Eq:CommutatorEnergyPreserving}
([\Psi, \Omega], \Omega)=0 \ .
\end{equation} 
By It\^{o}'s formula, we derive
\begin{equation}\label{Thm:ErgodicityEulerArnoldZeitlinTruncatedDissipatedForcedWithFastNoise:Eq:ItoFormula}
\begin{array}{ll}
& dV(\Omega^{\varepsilon, \delta}_t) 
\\
= & \dfrac{1}{\varepsilon} \sum\limits_{j=1}^m (\nabla V(\Omega_t^{\varepsilon,\delta}), X_j(\Omega_t^{\varepsilon,\delta}))\circ dW_t^{(j)} + \rho\sum\limits_{k=1}^l (\nabla V(\Omega_t^{\varepsilon, \delta}), Y_k(\Omega_t^{\varepsilon, \delta}))\circ dB_t^{(k)}
\\
& \ +\delta \left(\nabla V(\Omega_t^{\varepsilon, \delta}), \sum\limits_{k\in \mathcal{K}^+_N}\left[L_{k, \cos, N}d\widetilde{W}_t^{(k, \cos)}+L_{k, \sin, N}d\widetilde{W}_t^{(k, \sin)}\right]\right)
\\
& \ + \dfrac{\delta^2}{2}\sum\limits_{k\in \mathcal{K}_N^+}\left([\nabla^2 V(\Omega_t^{\varepsilon, \delta})](L_{k, \cos N}, L_{k, \cos N})+[\nabla^2 V(\Omega_t^{\varepsilon, \delta})](L_{k, \sin, N}, L_{k, \sin, N})\right)dt
\\
& \ + (\nabla V(\Omega_t^{\varepsilon, \delta}), [A^{-1}\Omega_t^{\varepsilon, \delta}, \Omega_t^{\varepsilon, \delta}]-\alpha\Omega_t^{\varepsilon, \delta} + \nu(-A)\Omega_t^{\varepsilon, \delta})dt 
\\
& \ + \left(\nabla V(\Omega_t^{\varepsilon, \delta}), \sum\limits_{k\in \mathcal{K}_N^+, |k| \approx k_f}q_k \left[ 
L_{k, \cos, N} d\beta^{(k, \cos)}_t
+
L_{k, \sin, N} d\beta^{(k, \sin)}_t
\right]\right)
\\
& \ + \dfrac{1}{2}\sum\limits_{k\in \mathcal{K}_N^+, |k|\approx k_f}q_k^2\left([\nabla^2 V(\Omega_t^{\varepsilon, \delta})](L_{k, \cos N}, L_{k, \cos N})+[\nabla^2 V(\Omega_t^{\varepsilon, \delta})](L_{k, \sin, N}, L_{k, \sin, N})\right)dt \ .
\end{array}
\end{equation}
By (\ref{Eq:RealSinCosZeitlinBasis}) and (\ref{Eq:OrthogonalBasisZeitlinInnerProduct}), we see that $\|L_{k, \cos, N}\|^2=\|L_{k, \sin, N}\|^2=\dfrac{c_N}{2}$. Combining this with  (\ref{Thm:ErgodicityEulerArnoldZeitlinTruncatedDissipatedForcedWithFastNoise:Eq:LyapunovFunctionGradient}), taking expectation on the above It\^{o}'s formula (\ref{Thm:ErgodicityEulerArnoldZeitlinTruncatedDissipatedForcedWithFastNoise:Eq:ItoFormula}), and making use of condition (\ref{Assumption:FastVectorEnergyPreservingPerturbation:Eq:TangentialVector}) in Assumption \ref{Assumption:FastVectorEnergyPreservingPerturbation} we have
\begin{equation}\label{Thm:ErgodicityEulerArnoldZeitlinTruncatedDissipatedForcedWithFastNoise:Eq:ExpectedItoLyapunov} 
\begin{array}{ll}
d\mathbf{E} V(\Omega_t^{\varepsilon,\delta}) & = 2\mathbf{E} \left[\rho \sum\limits_{k=1}^l (\Omega_t^{\varepsilon,\delta}, Y_k(\Omega_t^{\varepsilon,\delta}))\circ dB_t^{(k)}\right]
\\
& \qquad + 2\mathbf{E} \left(\Omega_t^{\varepsilon, \delta}, [A^{-1}\Omega_t^{\varepsilon, \delta}, \Omega_t^{\varepsilon, \delta}]-\alpha\Omega_t^{\varepsilon, \delta} + \nu(-A)\Omega_t^{\varepsilon, \delta}\right)dt
\\
& \qquad + \delta^2 c_N |\mathcal{K}_N^+|dt + c_N \sum\limits_{k\in \mathcal{K}_N^+, |k|\approx k_f}q_k^2dt
\\
& \leq 2\rho \mathbf{E} \left[\sum\limits_{k=1}^l (\Omega_t^{\varepsilon,\delta}, Y_k(\Omega_t^{\varepsilon,\delta}))\circ dB_t^{(k)}\right]
\\
& \qquad - 2c_{\alpha, \nu}\mathbf{E}\|\Omega_t^{\varepsilon, \delta}\|^2 dt + \delta^2 c_N \dfrac{N^2-1}{2} dt + c_N \sum\limits_{k\in \mathcal{K}_N^+, |k|\approx k_f}q_k^2 dt
\\
& = 2\rho \mathbf{E} \left[\sum\limits_{k=1}^l (\Omega_t^{\varepsilon,\delta}, Y_k(\Omega_t^{\varepsilon,\delta}))\circ dB_t^{(k)}\right]
- 2c_{\alpha, \nu}\mathbf{E} V(\Omega_t^{\varepsilon, \delta}) dt + D dt \ ,
\end{array}
\end{equation}
where
\begin{equation}\label{Thm:ErgodicityEulerArnoldZeitlinTruncatedDissipatedForcedWithFastNoise:Eq:ConstantD}
D=D(\alpha, \nu, \delta, N, k_f, q)=\left(2 c_{\alpha, \nu}+ \delta^2 c_N \dfrac{N^2-1}{2} + c_N \sum\limits_{k\in \mathcal{K}_N^+, |k|\approx k_f}q_k^2\right) > 0 \ .
\end{equation}
It remains to estimate the term $$\displaystyle{\mathbf{E} \left[\sum\limits_{k=1}^l (\Omega_t^{\varepsilon,\delta}, Y_k(\Omega_t^{\varepsilon,\delta}))\circ dB_t^{(k)}\right]} \ .$$ 
By combining (\ref{Thm:ErgodicityEulerArnoldZeitlinTruncatedDissipatedForcedWithFastNoise:Eq:LyapunovFunctionGradient}) with (\ref{Lm:StratonovichToIto:Eq:StratonovichToIto}) in Lemma \ref{Lm:StratonovichToIto}, we can estimate 
\begin{equation}\label{Thm:ErgodicityEulerArnoldZeitlinTruncatedDissipatedForcedWithFastNoise:Eq:ExpectationFastPart}
\begin{array}{ll}
& \mathbf{E} \left[\sum\limits_{k=1}^l (\Omega_t^{\varepsilon,\delta}, Y_k(\Omega_t^{\varepsilon,\delta}))\circ dB_t^{(k)}\right]
\\
= &  \dfrac{\rho}{2}
\sum\limits_{k=1}^l \left[\mathbf{E}\|Y_k(\Omega_t^{\varepsilon, \delta})\|^2+\mathbf{E} (\Omega_t^{\varepsilon, \delta}, [D Y_k(\Omega_t^{\varepsilon, \delta})](Y_k(\Omega_t^{\varepsilon, \delta})))\right]dt
\\
\leq & \dfrac{\rho}{2}\sum\limits_{k=1}^l \left[2C^2(1+\mathbf{E}\|\Omega_t^{\varepsilon, \delta}\|^2)+\dfrac{1}{2}C\mathbf{E}(\|\Omega_t^{\varepsilon, \delta}\|^2+\|Y_k(\Omega_t^{\varepsilon, \delta})\|^2)\right]dt
\\
\leq & \dfrac{\rho}{2}\sum\limits_{k=1}^l \left[2C^2(1+\mathbf{E}\|\Omega_t^{\varepsilon, \delta}\|^2)+\dfrac{1}{2}C(1+2C^2)(1+\mathbf{E}\|\Omega_t^{\varepsilon, \delta}\|^2)\right]dt
\\
= & \dfrac{\rho}{2}l\left(2C^2+\dfrac{1}{2}C(1+2C^2)\right)\mathbf{E} V(\Omega_t^{\varepsilon,\delta})dt \ ,
\end{array}
\end{equation}
where $C>0$ is given in (\ref{Assumption:FastVectorEnergyPreservingPerturbation:Eq:BoundedFirstDerivativePerturbative}). Inserting (\ref{Thm:ErgodicityEulerArnoldZeitlinTruncatedDissipatedForcedWithFastNoise:Eq:ExpectationFastPart}) back into (\ref{Thm:ErgodicityEulerArnoldZeitlinTruncatedDissipatedForcedWithFastNoise:Eq:ExpectedItoLyapunov}), using (\ref{Assumption:FastVectorEnergyPreservingPerturbation:Eq:PerturbationParameter}), we get
\begin{equation}\label{Thm:ErgodicityEulerArnoldZeitlinTruncatedDissipatedForcedWithFastNoise:Eq:ExpectedItoLyapunovFinalEstimate}
d\mathbf{E} V(\Omega_t^{\varepsilon,\delta})\leq \left[K^2 l\left(2C^2+\dfrac{1}{2}C(1+2C^2)\right)-2c_{\alpha, \nu}\right]\mathbf{E} V(\Omega_t^{\varepsilon,\delta})dt+D dt \ .
\end{equation}
Using (\ref{Thm:ErgodicityEulerArnoldZeitlinTruncatedDissipatedForcedWithFastNoise:Eq:DampingDissipationL2Estimatecalphanu}) and (\ref{Assumption:FastVectorEnergyPreservingPerturbation:Eq:PerturbationParameter:ConstraintK}), this yields the desired compactness estimate that fits into \cite[Section 3]{[NathanUniqueErgodicityNotes]}, which proves the existence of invariant measure.
\end{proof}

\begin{lemma}\label{Lm:StratonovichToIto}
The Stratonovich stochastic differential terms $(\nabla V(\Omega_t^{\varepsilon,\delta}), Y_k(\Omega_t^{\varepsilon,\delta}))\circ dB^{(k)}_t$, $1\leq k \leq l$ in \emph{(\ref{Thm:ErgodicityEulerArnoldZeitlinTruncatedDissipatedForcedWithFastNoise:Eq:ItoFormula})} can be expanded in It\^{o} form as
\begin{equation}\label{Lm:StratonovichToIto:Eq:StratonovichToIto}
\begin{array}{ll}
& (\nabla V(\Omega_t^{\varepsilon, \delta}), Y_k(\Omega_t^{\varepsilon,\delta}))\circ dB_t^{(k)}
\\
= & (\nabla V(\Omega_t^{\varepsilon, \delta}), Y_k(\Omega_t^{\varepsilon,\delta})) dB_t^{(k)}
\\
& \qquad +
\dfrac{\rho}{2} \left[[D^2 V(\Omega_t^{\varepsilon,\delta})](Y_k(\Omega_t^{\varepsilon,\delta}), Y_k(\Omega_t^{\varepsilon,\delta}))+(\nabla V(\Omega_t^{\varepsilon,\delta}), [D Y_k(\Omega_t^{\varepsilon,\delta})](Y_k(\Omega_t^{\varepsilon,\delta})))\right]dt \ .
\end{array}
\end{equation}
\end{lemma}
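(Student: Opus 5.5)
The plan is to apply the standard Stratonovich-to-It\^{o} conversion rule to the scalar process $g(\Omega_t^{\varepsilon,\delta})$, where
\[
g(\Omega)=(\nabla V(\Omega),Y_k(\Omega)),
\]
integrated against $dB^{(k)}_t$. For a continuous semimartingale integrand $g(\Omega_t)$ we have
\[
g(\Omega_t)\circ dB_t^{(k)}=g(\Omega_t)\,dB_t^{(k)}+\tfrac12\,d\langle g(\Omega),B^{(k)}\rangle_t .
\]
So the whole task reduces to computing the quadratic covariation between $g(\Omega_t^{\varepsilon,\delta})$ and $B^{(k)}_t$.

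\textbf{Step 1: isolate the $dB^{(k)}_t$ part of $dg$.} I will first write $g(\Omega_t^{\varepsilon,\delta})$ as a semimartingale via It\^{o}'s formula applied to (\ref{Eq:EulerArnoldZeitlinTruncatedDissipatedForcedWithFastNoise:Specific}). Only the martingale part driven by $B^{(k)}$ contributes to the covariation, because all other Brownian motions ($W^{(j)}$, $\widetilde W$, $\beta$, $B^{(k')}$ with $k'\neq k$) are independent of $B^{(k)}$. Bounded-variation terms also do not contribute. In (\ref{Eq:EulerArnoldZeitlinTruncatedDissipatedForcedWithFastNoise:Specific}) the coefficient of $dB^{(k)}_t$ is $\frac1\varepsilon\cdot\varepsilon\rho\,Y_k=\rho Y_k(\Omega)$; the Stratonovich correction only adds $dt$ terms. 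Hence
\[
d\langle g(\Omega),B^{(k)}\rangle_t=\rho\,[Dg(\Omega_t)](Y_k(\Omega_t))\,dt .
\]

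\textbf{Step 2: compute the directional derivative of $g$.} By the product rule, for a direction $\Theta$,
\[
[Dg(\Omega)](\Theta)=[D^2V(\Omega)](Y_k(\Omega),\Theta)+(\nabla V(\Omega),[DY_k(\Omega)](\Theta)).
\]
Here I use that $\nabla V$ is the Riesz representative of $DV$, so differentiating $(\nabla V,\cdot)$ in direction $\Theta$ yields the Hessian $D^2V$ as defined in the proof of Theorem \ref{Thm:ErgodicityEulerArnoldZeitlinTruncatedDissipatedForcedWithFastNoise}. Setting $\Theta=Y_k(\Omega)$ and inserting this into the conversion rule gives exactly the $\frac{\rho}{2}[\cdots]\,dt$ correction claimed in (\ref{Lm:StratonovichToIto:Eq:StratonovichToIto}).

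\textbf{Main obstacle: justifying the conversion rule.} The only delicate point is that $g(\Omega_t)$ must be a genuine continuous semimartingale. This needs $g\in C^2$, which holds because $V$ is quadratic and the $Y_k$ are smooth. It also needs the factor $\rho$ (rather than $\varepsilon\rho$ or $\rho/\varepsilon$) to be tracked correctly through the $1/\varepsilon$ prefactor in (\ref{Eq:EulerArnoldZeitlinTruncatedDissipatedForcedWithFastNoise:Specific}). I will check that cancellation explicitly. Integrability issues are local and can be handled by stopping times if needed; the growth bound (\ref{Assumption:FastVectorEnergyPreservingPerturbation:Eq:GrowthCondition}) ensures the expectations used afterwards are finite.
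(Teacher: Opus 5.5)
Your proposal is correct and follows the paper's proof essentially step for step. Both use the conversion $g\circ dB^{(k)}_t=g\,dB^{(k)}_t+\tfrac12\,d\langle g,B^{(k)}\rangle_t$, identify the $dB^{(k)}_t$ coefficient as $\rho Y_k$ after the $\tfrac{1}{\varepsilon}\cdot\varepsilon\rho$ cancellation to obtain the covariation, and compute the directional derivative of $(\nabla V,Y_k)$ by the product rule, your remark that $g\in C^2$ because $V$ is quadratic being a small point of rigor the paper leaves implicit.
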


\begin{proof}
By \cite[Chapter IV, Exercise (2.18)]{[RevuzYorContinuousMartingaleBM]} we have, for every $1\leq k \leq l$, that
\begin{equation}\label{Lm:StratonovichToIto:Eq:StratonovichToItoExpandViaCrossVariation}
(\nabla V(\Omega_t^{\varepsilon, \delta}), Y_k(\Omega_t^{\varepsilon,\delta}))\circ dB_t^{(k)}=(\nabla V(\Omega_t^{\varepsilon, \delta}), Y_k(\Omega_t^{\varepsilon,\delta})) dB_t^{(k)}+
\dfrac{1}{2}d\left\langle
(\nabla V(\Omega_t^{\varepsilon, \delta}), Y_k(\Omega_t^{\varepsilon,\delta})), B^{(k)}\right\rangle_t \ ,
\end{equation}
where $\left\langle \bullet, \bullet \right\rangle_t$ stands for the cross-variation between two continuous semimartingales. Define $F_k(\Omega)=(\nabla V(\Omega), Y_k(\Omega))$ and $[DF_k(\Omega)](\Theta)=\left.\dfrac{d}{dt}\right|_{t=0}F_k(\Omega+t\Theta)$ be the Fr\'{e}chet derivative of $F_k(\Omega)$. Then by the Stratonovich chain rule, using (\ref{Eq:EulerArnoldZeitlinTruncatedDissipatedForcedWithFastNoise:Specific}),  (\ref{Eq:EulerArnoldZeitlinTruncatedDissipatedForcedWithFastNoise:FastPart}) we have
$$\begin{array}{ll}
& dF_k(\Omega_t^{\varepsilon,\delta})
\\
= & [DF_k(\Omega_t^{\varepsilon,\delta})]\circ d\Omega_t^{\varepsilon,\delta}
\\
= & [DF_k(\Omega_t^{\varepsilon,\delta})]\left(\rho Y_k(\Omega_t^{\varepsilon,\delta})\right)\circ dB_t^{(k)}+ \text{ directions having vanishing cross variation with } B_t^{(k)} \ ,
\end{array}$$
so we get
\begin{equation}\label{Lm:StratonovichToIto:Eq:CrossVariationInTermsOfFrechetDerivative} 
d\left\langle
(\nabla V(\Omega_t^{\varepsilon, \delta}), Y_k(\Omega_t^{\varepsilon,\delta})), B^{(k)}\right\rangle_t=[DF_k(\Omega_t^{\varepsilon, \delta})]\left(\rho Y_k(\Omega_t^{\varepsilon, \delta})\right)dt \ .
\end{equation}

It can be further calculated that 
$$\begin{array}{ll}
[DF_k(\Omega)](\Theta) & = \left.\dfrac{d}{dt}\right|_{t=0} (\nabla V(\Omega+t\Theta), Y_k(\Omega+t\Theta))
\\
& =\left(\left.\dfrac{d}{dt}\right|_{t=0}\nabla V(\Omega+t\Theta), Y_k(\Omega)\right)+(\nabla V(\Omega), [DY_k(\Omega)](\Theta))
\\
& = ([\nabla^2 V(\Omega)](\Theta), Y_k(\Omega))+(\nabla V(\Omega), [D Y_k(\Omega)](\Theta))
\\
& = [D^2 V(\Omega)](\Theta, Y_k(\Omega))+(\nabla V(\Omega), [D Y_k(\Omega)](\Theta)) \ .
\end{array}$$
Setting $\Theta=\rho Y_k(\Omega)$ in the above, we get
\begin{equation}\label{Lm:StratonovichToIto:Eq:FrechetDerivativeInDirection}
[D F_k(\Omega)]\left(\rho Y_k(\Omega)\right)=\rho \left[[D^2 V(\Omega)](Y_k(\Omega), Y_k(\Omega))+(\nabla V(\Omega), [D Y_k(\Omega)](Y_k(\Omega)))\right] \ .
\end{equation}
Combining (\ref{Lm:StratonovichToIto:Eq:StratonovichToItoExpandViaCrossVariation}), (\ref{Lm:StratonovichToIto:Eq:CrossVariationInTermsOfFrechetDerivative}) and (\ref{Lm:StratonovichToIto:Eq:FrechetDerivativeInDirection}) we get 
$$\begin{array}{ll}
& (\nabla V(\Omega_t^{\varepsilon, \delta}), Y_k(\Omega_t^{\varepsilon,\delta}))\circ dB_t^{(k)}
\\
= & (\nabla V(\Omega_t^{\varepsilon, \delta}), Y_k(\Omega_t^{\varepsilon,\delta})) dB_t^{(k)}+
\dfrac{1}{2}[DF_k(\Omega_t^{\varepsilon, \delta})]\left(\rho Y_k(\Omega_t^{\varepsilon, \delta})\right)dt
\\
= & (\nabla V(\Omega_t^{\varepsilon, \delta}), Y_k(\Omega_t^{\varepsilon,\delta})) dB_t^{(k)}
\\
& \qquad +
\dfrac{\rho}{2} \left[[D^2 V(\Omega_t^{\varepsilon,\delta})](Y_k(\Omega_t^{\varepsilon,\delta}), Y_k(\Omega_t^{\varepsilon,\delta}))+(\nabla V(\Omega_t^{\varepsilon,\delta}), [D Y_k(\Omega_t^{\varepsilon,\delta})](Y_k(\Omega_t^{\varepsilon,\delta})))\right]dt \ ,
\end{array}$$
which is (\ref{Lm:StratonovichToIto:Eq:StratonovichToIto}).
\end{proof}

\begin{lemma}\label{Lm:StratonovichToIto:X}
The Stratonovich stochastic differential terms $(\nabla V(\Omega_t^{\varepsilon,\delta}), X_j(\Omega_t^{\varepsilon,\delta}))\circ dW^{(j)}_t$, $1\leq j \leq m$ in \emph{(\ref{Thm:ErgodicityEulerArnoldZeitlinTruncatedDissipatedForcedWithFastNoise:Eq:ItoFormula})} can be expanded in It\^{o} form as
\begin{equation}\label{Lm:StratonovichToIto:X:Eq:StratonovichToIto}
\begin{array}{ll}
& (\nabla V(\Omega_t^{\varepsilon, \delta}), X_j(\Omega_t^{\varepsilon,\delta}))\circ dW_t^{(j)}
\\
= & (\nabla V(\Omega_t^{\varepsilon, \delta}), X_j(\Omega_t^{\varepsilon,\delta})) dW_t^{(j)}
\\
& \qquad +
\dfrac{1}{2\varepsilon} \left[[D^2 V(\Omega_t^{\varepsilon,\delta})](X_j(\Omega_t^{\varepsilon,\delta}), X_j(\Omega_t^{\varepsilon,\delta}))+(\nabla V(\Omega_t^{\varepsilon,\delta}), [D X_j(\Omega_t^{\varepsilon,\delta})](X_j(\Omega_t^{\varepsilon,\delta})))\right]dt \ .
\end{array}
\end{equation}
\end{lemma}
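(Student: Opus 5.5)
The argument follows the proof of Lemma \ref{Lm:StratonovichToIto} line for line, with the scaling factor $\rho$ replaced by $1/\varepsilon$. First, by \cite[Chapter IV, Exercise (2.18)]{[RevuzYorContinuousMartingaleBM]}, for each $1\leq j\leq m$ we write
$$(\nabla V(\Omega_t^{\varepsilon,\delta}), X_j(\Omega_t^{\varepsilon,\delta}))\circ dW_t^{(j)} = (\nabla V(\Omega_t^{\varepsilon,\delta}), X_j(\Omega_t^{\varepsilon,\delta}))\, dW_t^{(j)} + \dfrac{1}{2}\, d\left\langle G_j(\Omega^{\varepsilon,\delta}), W^{(j)}\right\rangle_t \ ,$$
where $G_j(\Omega)=(\nabla V(\Omega), X_j(\Omega))$. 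The task then reduces to computing this cross-variation.

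Next, apply the Stratonovich chain rule to $G_j(\Omega_t^{\varepsilon,\delta})$ using (\ref{Eq:EulerArnoldZeitlinTruncatedDissipatedForcedWithFastNoise:Specific}) and (\ref{Eq:EulerArnoldZeitlinTruncatedDissipatedForcedWithFastNoise:FastPart}). The only term of $d\Omega_t^{\varepsilon,\delta}$ with nonzero cross-variation against $W^{(j)}$ is $\frac{1}{\varepsilon}X_j(\Omega_t^{\varepsilon,\delta})\circ dW_t^{(j)}$. The other Brownian motions $W^{(i)}$ ($i\neq j$), $B^{(k)}$, $\widetilde W$ and $\beta$ are independent of $W^{(j)}$, and the $dt$ terms carry zero quadratic variation. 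Hence
$$d\langle G_j(\Omega^{\varepsilon,\delta}), W^{(j)}\rangle_t = [DG_j(\Omega_t^{\varepsilon,\delta})]\Big(\tfrac{1}{\varepsilon}X_j(\Omega_t^{\varepsilon,\delta})\Big)\,dt \ .$$

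Then compute the Fréchet derivative by the product rule, exactly as in the preceding proof:
$$[DG_j(\Omega)](\Theta)=[D^2V(\Omega)](\Theta, X_j(\Omega))+(\nabla V(\Omega),[DX_j(\Omega)](\Theta)) \ .$$
Setting $\Theta=\frac{1}{\varepsilon}X_j(\Omega)$, pulling out the factor $1/\varepsilon$ by linearity, and substituting back gives (\ref{Lm:StratonovichToIto:X:Eq:StratonovichToIto}).

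The computation is routine. The one point needing care is the justification for discarding cross-variations with the other noise components, which holds because the Brownian motions are independent and the drift terms have bounded variation. Smoothness of $X_j$ is needed so that $G_j$ is $C^1$ and the Stratonovich chain rule applies; no growth bound on $X_j$ is required, because the identity is pathwise and local. (In the application one additionally has $(\nabla V, X_j)=2(\Omega,X_j)=0$ by (\ref{Assumption:FastVectorEnergyPreservingPerturbation:Eq:TangentialVector}), but the lemma itself does not use this.)
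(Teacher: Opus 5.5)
Your proposal is correct and matches the paper's proof, which just says to repeat the proof of Lemma \ref{Lm:StratonovichToIto} with $\rho Y_k$ replaced by $\frac{1}{\varepsilon}X_j$ and $B_t^{(k)}$ by $W_t^{(j)}$; that is the computation you carry out. One minor caveat on your closing regularity remark: reading off $d\langle G_j(\Omega^{\varepsilon,\delta}), W^{(j)}\rangle_t$ via It\^{o}'s formula needs $G_j=(\nabla V, X_j)$ to be $C^2$, i.e.\ $V\in C^3$, not merely $C^1$, though this is harmless for the smooth $V$ and $X_j$ used in the paper.
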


\begin{proof}
The proof of this Lemma is very similar to that of Lemma \ref{Lm:StratonovichToIto} with $\rho Y_k$ replaced by $\dfrac{1}{\varepsilon}X_j$ and $B_t^{(k)}$ replaced by $W_t^{(j)}$, so we omit the details.
\end{proof}

\subsection{The averaging framework}\label{Sec:AveragingFramework:Framework}

We define the auxiliary processes

\begin{equation}\label{Eq:EulerArnoldZeitlinTruncatedDissipatedForcedWithFastNoise:FastPart:X}
d\Omega^X_t=\sum\limits_{j=1}^m X_j(\Omega^X_t)\circ dW_t^{(j)} \ ,
\end{equation}
and
\begin{equation}\label{Eq:EulerArnoldZeitlinTruncatedDissipatedForcedWithFastNoise:FastPart:Y}
d\Omega^Y_t=\sum\limits_{k=1}^l Y_k(\Omega^Y_t)\circ dB_t^{(k)} \ ,
\end{equation}
which are the $X$ and $Y$ parts of the fast stochastic forcing $\mathscr{F}(\Omega_t^{\varepsilon,\delta})dt$ in (\ref{Eq:EulerArnoldZeitlinTruncatedDissipatedForcedWithFastNoise:FastPart}). 

Based on (\ref{Eq:EulerArnoldZeitlinTruncatedDissipatedForcedWithFastNoise:Specific}), we make the following

\begin{assumption}[Fast mixing on coadjoint orbits and transversal perturbation]\label{Assumption:FastVectorMixingFullCoadjoint}
We assume that the fast vector fields $X_j(\Omega)$ in
\emph{Assumption \ref{Assumption:FastVectorEnergyPreservingPerturbation}} are tangent
to coadjoint orbits $\mathcal{O}^*(\Omega)$ and preserve all Casimir invariants. In particular, since the second Casimir $c_1(\Omega)$ is conserved, \emph{Assumption \ref{Assumption:FastVectorEnergyPreservingPerturbation}} is automatically satisfied under \emph{Assumption \ref{Assumption:FastVectorMixingFullCoadjoint}}. 

We further assume that on each coadjoint orbit under consideration, the
tangential vector fields \(X_j\) satisfy the bracket-generating condition
\begin{equation}\label{Assumption:FastVectorMixingFullCoadjoint:Eq:LieBracketGenerating}
\operatorname{Lie}\{X_1,\ldots,X_m\}(\Omega)\stackrel{\mathrm{def}}{=} \emph{\text{span}}(X_1,...,X_m, [X_1, X_2], ..., [X_1,[X_2, X_3]], ...)=
T_\Omega\mathcal O^*(\Omega) \ .
\end{equation}
The bracket-generating condition \emph{(\ref{Assumption:FastVectorMixingFullCoadjoint:Eq:LieBracketGenerating})} then implies that the process $\Omega^X_t$ defined in \emph{(\ref{Eq:EulerArnoldZeitlinTruncatedDissipatedForcedWithFastNoise:FastPart:X})}
is hypoelliptic and ergodic on \(\mathcal O^*(\Omega^X_0)\), with invariant measure
given by some normalized measure $dm_{\mathcal{O}^*(\Omega_0^X)}(\Omega)$, $\Omega \in \mathcal{O}^*(\Omega_0^X)$. This represents rapid mixing within symmetry classes determined
by the Euler-Arnold geometry.

The process $\Omega_t^Y$ in \emph{(\ref{Eq:EulerArnoldZeitlinTruncatedDissipatedForcedWithFastNoise:FastPart:Y})} provides a transversal perturbation that breaks the coadjoint orbit conservation of the $\Omega_t^X$ dynamics.
\end{assumption}

By Theorem \ref{Thm:ErgodicityEulerArnoldZeitlinTruncatedDissipatedForcedWithFastNoise}, for any given $\varepsilon, \delta, \alpha, \nu>0$, system (\ref{Eq:EulerArnoldZeitlinTruncatedDissipatedForcedWithFastNoise:Specific}) admits a unique invariant measure $\mu^{(N)}_{\varepsilon, \delta, \alpha, \nu}$. The limiting procedure of $\mu^{(N)}_{\varepsilon, \delta, \alpha, \nu}$ as $\varepsilon\rightarrow 0$ is closely related to the  \textit{averaging principle} (see, for example, \cite[Chapter 7]{[FWBook]}, \cite{[Khasminskii1968]}, \cite{[XuemeiLi2008]}, \cite{[AveragingDiffusionFoliatedSpacesAOP2016]}), which roughly speaking is a dimension reduction process, that replaces the motion of $\Omega_t^{\varepsilon,\delta}$ by fast mixing on the coadjoint orbit associated with slow evolution on the Casimir parameter space $\mathcal{A}$ defined in (\ref{Eq:suNCoAdjointFoliationProjectionMap}). One of the classical results of the averaging principle asserts that as $\varepsilon\rightarrow 0$, the process $Z_t^{\varepsilon, \delta}=\pi(\Omega_t^{\varepsilon, \delta})$ converges (weakly) to a limiting Markov process $Z_t^\delta$ in the space of continuous trajectories on $\mathcal{A}$. Here $\Omega_t^{\varepsilon, \delta}$ is defined in (\ref{Eq:EulerArnoldZeitlinTruncatedDissipatedForcedWithFastNoise:Specific}) and the projection $\pi: \su(N)\rightarrow \mathcal{A}$ is defined in (\ref{Eq:suNCoAdjointFoliationProjectionMap}). The limiting process $Z_t^\delta$ is characterized by averaging out (\ref{Eq:EulerArnoldZeitlinTruncatedDissipatedForcedWithFastNoise:Specific}) with respect to fast mixing on coadjoint orbits. For the rest of this subsection we'll make this averaging procedure more precise.

As in (\ref{Eq:suNCoadjointFoliationLeafDecomposition}), for any point $\Omega\in \su(N)$, we can introduce the Casimir variables $z=\pi(\Omega)=(c_1(\Omega),...,c_{N-1}(\Omega))$ where $c_k(\Omega)=\operatorname{tr}(\Omega^{k+1})$, $k=1,...,N-1$ are defined in (\ref{Eq:suNCoAdjointFoliationCasimirFunctions}). 
Then $\pi^{-1}(z)$ is a coadjoint foliation leaf $\mathcal{F}_z=\mathcal{O}^*(z)$ with Casimir variable $z$, as introduced in (\ref{Eq:suNCoAdjointFoliationPowerSums}). The averaging operator can then be defined on each of the foliation leaf $\mathcal{F}_z=\mathcal{O}^*(z)$. Let us assume that $z\in \mathcal{A}_{\text{regular}}$ corresponds to a regular stratum, as defined in (\ref{Eq:RegularStataCoAdjointFoliationsuNDiscriminantHypersurface}). 
For each such leaf $\mathcal{F}_z$, let $dm_z(\Omega)$ be some normalized measure defined on $\mathcal{F}_z$ with $\Omega\in \mathcal{F}_z$. This corresponds to the invariant measure defined in Assumption \ref{Assumption:FastVectorMixingFullCoadjoint}. Then for any function (observable)
$\phi: \mathfrak{su}(N)\to \mathbb{R}$, we define the leafwise averaging operator by
\begin{equation}\label{Eq:AveragingLeafwiseCoAdjointsuN}
\overline{\phi}(z)
=
\int_{\mathcal O^*(z)}
\phi(\Omega)\,
dm_z(\Omega)\ , \ z\in \mathcal{A}_{\text{regular}} \ ,
\end{equation}
i.e., the averaging procedure consists of integrating out the fast motion
along each regular stratum (coadjoint leaf) while keeping the transverse Casimir variables $z$ fixed. 

Define the slow process
\begin{equation}\label{Eq:EulerArnoldZeitlinTruncatedDissipatedForcedWithFastNoise:EssentialSlowMotionPart}
d\Omega_t^{\text{slow}} =  \rho \sum\limits_{k=1}^l Y_k(\Omega_t^{\text{slow}})\circ dB_t^{(k)}+B(\Omega_t^{\text{slow}})dt+f(t)dt+
\delta d\mathfrak{b}_t \ ,
\end{equation}
which is the part of the process (\ref{Eq:EulerArnoldZeitlinTruncatedDissipatedForcedWithFastNoise:Specific}) without the $\dfrac{1}{\varepsilon}\sum\limits_{j=1}^m X_j(\Omega)\circ dW_t^{(j)}$ term. The slow process (\ref{Eq:EulerArnoldZeitlinTruncatedDissipatedForcedWithFastNoise:EssentialSlowMotionPart}) is the part of (\ref{Eq:EulerArnoldZeitlinTruncatedDissipatedForcedWithFastNoise:Specific}) that is ``essentially slow" at the asymptotic when $\varepsilon\rightarrow 0$. Using the It\^{o}'s formula for (\ref{Eq:EulerArnoldZeitlinTruncatedDissipatedForcedWithFastNoise:Specific}), i.e., the equation (\ref{Thm:ErgodicityEulerArnoldZeitlinTruncatedDissipatedForcedWithFastNoise:Eq:ItoFormula}) applied to a general function $V\in \mathbf{C}^{(2)}_{\su(N)}(\mathbb{R})$, as well as Lemma \ref{Lm:StratonovichToIto} to expand Stratonovich stochastic differential into It\^{o} form, we can write down the generator of $\Omega_t^{\text{slow}}$ as

\begin{equation}\label{Eq:EulerArnoldZeitlinTruncatedDissipatedForcedWithFastNoise:EssentialSlowMotionPartGenerator}
\begin{array}{ll}
& L_{\text{slow}} V(\Omega) 
\\
= &  \dfrac{\rho^2}{2}\sum\limits_{k=1}^l\left[[D^2 V(\Omega)](Y_k(\Omega), Y_k(\Omega))+(\nabla V(\Omega), [DY_k(\Omega)](Y_k(\Omega)))\right]
\\
& \qquad + \left(\nabla V(\Omega), [-A^{-1}\Omega, \Omega]-\alpha \Omega +\nu(-A)\Omega\right)
\\
& \qquad + \dfrac{\delta^2}{2}\sum\limits_{k\in \mathcal{K}_N^+} \left([D^2 V(\Omega)](L_{k, \cos N}, L_{k, \cos N})+[D^2 V(\Omega)](L_{k, \sin, N}, L_{k, \sin, N})\right)
\\
& \qquad + \dfrac{1}{2}\sum\limits_{k\in \mathcal{K}_N^+, |k|\approx k_f}q_k^2\left([D^2 V(\Omega)](L_{k, \cos N}, L_{k, \cos N})+[D^2 V(\Omega)](L_{k, \sin, N}, L_{k, \sin, N})\right) \ .
\end{array}
\end{equation}

Define the fast process
\begin{equation}\label{Eq:EulerArnoldZeitlinTruncatedDissipatedForcedWithFastNoise:FastMotionPart}
d\Omega_t^{\text{fast}}=\sum\limits_{j=1}^m X_j(\Omega_t^{\text{fast}})\circ dW_t^{(j)} \ ,
\end{equation}
which is the driving process of the term $\dfrac{1}{\varepsilon}\sum\limits_{j=1}^m X_j(\Omega)\circ dW_t^{(j)}$ in (\ref{Eq:EulerArnoldZeitlinTruncatedDissipatedForcedWithFastNoise:Specific}). When $\varepsilon\rightarrow 0$, this term gives the fast motion of (\ref{Eq:EulerArnoldZeitlinTruncatedDissipatedForcedWithFastNoise:Specific}). Using the It\^{o}'s formula for (\ref{Eq:EulerArnoldZeitlinTruncatedDissipatedForcedWithFastNoise:Specific}), i.e., the equation (\ref{Thm:ErgodicityEulerArnoldZeitlinTruncatedDissipatedForcedWithFastNoise:Eq:ItoFormula}) applied to a general function $V\in \mathbf{C}^{(2)}_{\su(N)}(\mathbb{R})$, as well as Lemma \ref{Lm:StratonovichToIto:X} to expand Stratonovich stochastic differential into It\^{o} form, we can write down the generator of $\Omega_t^{\text{fast}}$ as

\begin{equation}\label{Eq:EulerArnoldZeitlinTruncatedDissipatedForcedWithFastNoise:FastMotionPartGenerator}
L_{\text{fast}}V(\Omega)=\dfrac{1}{2}\sum\limits_{j=1}^m \left[[D^2 V(\Omega)](X_j(\Omega), X_j(\Omega))+(\nabla V(\Omega), [DX_j(\Omega)](X_j(\Omega)))\right] \ .
\end{equation}

Combining (\ref{Eq:EulerArnoldZeitlinTruncatedDissipatedForcedWithFastNoise:FastMotionPartGenerator}) with (\ref{Eq:EulerArnoldZeitlinTruncatedDissipatedForcedWithFastNoise:EssentialSlowMotionPartGenerator}), the generator of the process $\Omega_t^{\varepsilon,\delta}$ in (\ref{Eq:EulerArnoldZeitlinTruncatedDissipatedForcedWithFastNoise:Specific}) is written as

\begin{equation}\label{Eq:EulerArnoldZeitlinTruncatedDissipatedForcedWithFastNoise:GeneratorFastSlow}
L^{\varepsilon,\delta}=\dfrac{1}{\varepsilon^2}L_{\text{fast}}+L_{\text{slow}} \ .
\end{equation}

Given (\ref{Eq:EulerArnoldZeitlinTruncatedDissipatedForcedWithFastNoise:EssentialSlowMotionPartGenerator}), the averaged process $Z_t^\delta$ on the foliation parameter space $\mathcal{A}$, before its first hitting of $\mathcal{A}_\text{singular}$, is characterized by the generator
\begin{equation}\label{Eq:AveragingPrincipleEffectiveSlowGenerator}
\bar{L}\varphi(z)
= \overline{L_\text{slow}(\varphi\circ \pi)}(z) \stackrel{\text{ by } (\ref{Eq:AveragingLeafwiseCoAdjointsuN})}{=}
\int_{\mathcal{O}^*(z)}
L_{\text{slow}}(\varphi\circ\pi)(\Omega)dm_z(\Omega) \ ,
\end{equation}
where $\varphi\in \mathbf{C}^{(2)}_{\mathcal{A}}(\mathbb{R})$. 

\begin{lemma}\label{Lm:FirstAndSecondFrechetDerivativeCasimirProjection} 
For any $\Omega\in \su(N)$ and $\Theta, \Theta_1, \Theta_2\in \su(N)$ we have
\begin{equation}\label{Lm:FirstAndSecondFrechetDerivativeCasimirProjection:Eq:FirstDerivative}
[D c_k(\Omega)](\Theta)=(k+1)\operatorname{tr}(\Omega^k \Theta) \ , \ k = 1,...,N-1 \ ,
\end{equation}
and
\begin{equation}\label{Lm:FirstAndSecondFrechetDerivativeCasimirProjection:Eq:SecondDerivative}
[D^2 c_k(\Omega)](\Theta_1, \Theta_2)=(k+1)\sum\limits_{r=0}^{k-1} \operatorname{tr}(\Omega^r \Theta_2 \Omega^{k-1-r} \Theta_1) \ , \ k = 1,...,N-1 \ .
\end{equation}

\end{lemma}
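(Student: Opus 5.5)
We will compute both derivatives directly from the definition $c_k(\Omega)=\operatorname{tr}(\Omega^{k+1})$ in (\ref{Eq:suNCoAdjointFoliationCasimirFunctions}), using only multilinearity of the matrix product and cyclicity of the trace. No structure of $\su(N)$ beyond its being a space of matrices is needed, so the identities hold for all $\Omega,\Theta,\Theta_1,\Theta_2\in\su(N)$.

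For the first derivative, we expand $(\Omega+s\Theta)^{k+1}$ as a noncommutative polynomial in $s$. The coefficient of $s$ is $\sum_{r=0}^{k}\Omega^{r}\Theta\Omega^{k-r}$, the sum over which of the $k+1$ factors is replaced by $\Theta$. Taking the trace and applying cyclicity gives $\operatorname{tr}(\Omega^r\Theta\Omega^{k-r})=\operatorname{tr}(\Omega^k\Theta)$ for every $r$. Summing the $k+1$ identical terms yields $[Dc_k(\Omega)](\Theta)=(k+1)\operatorname{tr}(\Omega^k\Theta)$.

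For the second derivative, we will not expand $(\Omega+s\Theta_1+t\Theta_2)^{k+1}$ to second order directly. Instead we differentiate the first-derivative formula: $[D^2c_k(\Omega)](\Theta_1,\Theta_2)=\left.\frac{d}{dt}\right|_{t=0}[Dc_k(\Omega+t\Theta_2)](\Theta_1)=(k+1)\left.\frac{d}{dt}\right|_{t=0}\operatorname{tr}\big((\Omega+t\Theta_2)^k\Theta_1\big)$. This is legitimate because $c_k$ is a polynomial and hence $C^\infty$, so the mixed partial at $s=t=0$ equals this iterated derivative. The first-order coefficient of $(\Omega+t\Theta_2)^k$ is $\sum_{r=0}^{k-1}\Omega^r\Theta_2\Omega^{k-1-r}$. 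Substituting gives exactly $(k+1)\sum_{r=0}^{k-1}\operatorname{tr}(\Omega^r\Theta_2\Omega^{k-1-r}\Theta_1)$, which is the claimed formula. By Schwarz's theorem (or by cyclicity and reindexing $r\mapsto k-1-r$), this expression is symmetric in $\Theta_1$ and $\Theta_2$, consistent with the definition of $D^2$ as a mixed partial.

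The only point needing care is bookkeeping. The factor $(k+1)$ arises from collapsing $k+1$ cyclically equivalent terms in the first derivative. In the second derivative, the remaining $k$ terms must not be collapsed, because $\Theta_1$ and $\Theta_2$ do not commute with $\Omega$. I do not expect any substantive obstacle.
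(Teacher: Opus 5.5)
Your proposal is correct and follows the paper's proof essentially step by step: you expand $(\Omega+s\Theta)^{k+1}$ to first order and collapse the $k+1$ terms by cyclicity of the trace, then you obtain the Hessian by differentiating the first-derivative formula along $\Theta_2$. Your added justification that this iterated derivative equals the mixed partial (since $c_k$ is polynomial), and your check that the result is symmetric in $\Theta_1,\Theta_2$, are small points the paper leaves implicit.
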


\begin{proof}
We calculate
$[D c_k(\Omega)](\Theta) = \left.\dfrac{d}{dt}\right|_{t=0} \operatorname{tr}(\Omega+t\Theta)^{k+1}
= \operatorname{tr}\left(\left.\dfrac{d}{d\varepsilon}\right|_{\varepsilon=0}(\Omega+\varepsilon \Theta)^{k+1}\right)
= \operatorname{tr}\left(\sum\limits_{r=0}^k \Omega^r \Theta \Omega^{k-r}\right)
= \sum\limits_{r=0}^k\operatorname{tr}\left( \Omega^r \Theta \Omega^{k-r}\right)$ which is (\ref{Lm:FirstAndSecondFrechetDerivativeCasimirProjection:Eq:FirstDerivative}) due to $\operatorname{tr}(AB)=\operatorname{tr}(BA)$. Based on this, we further calculate $[D^2 c_k(\Omega)](\Theta_1, \Theta_2)=\left.\dfrac{d}{ds}\right|_{s=0} [D c_k(\Omega+s\Theta_2)](\Theta_1)=\left.\dfrac{d}{ds}\right|_{s=0} (k+1)\operatorname{tr}((\Omega+s\Theta_2)^k \Theta_1)=(k+1)\operatorname{tr}\left(\left.\dfrac{d}{d\varepsilon}\right|_{\varepsilon=0}(\Omega+\varepsilon \Theta_2)^k \Theta_1\right)$ $=(k+1)\operatorname{tr}\left(\sum\limits_{r=0}^{k-1} \Omega^r \Theta_2 \Omega^{k-1-r} \Theta_1\right)$ which is (\ref{Lm:FirstAndSecondFrechetDerivativeCasimirProjection:Eq:SecondDerivative}) due to $\operatorname{tr}(A+B)=\operatorname{tr}(A)+\operatorname{tr}(B)$.
\end{proof} 

By (\ref{Eq:suNCoAdjointFoliationProjectionMap}), we have $\pi(\Omega)=(c_1(\Omega),...,c_{N-1}(\Omega))=z$. Since $$(\varphi\circ \pi)(\Omega)=\varphi(c_1(\Omega),...,c_{N-1}(\Omega)) \ ,$$
therefore for $\Omega, \Theta, \Theta_1, \Theta_2\in \su(N)$ we have that
\begin{equation}\label{Eq:FirstFrechetDerivativephiComposeCasimirProjection}
[D(\varphi\circ \pi)(\Omega)](\Theta) = \sum\limits_{k=1}^{N-1} \dfrac{\partial \varphi(z)}{\partial z_k}[D c_k(\Omega)](\Theta)
\ , \ z = \pi(\Omega) \ ,
\end{equation}
and
\begin{equation}\label{Eq:SecondFrechetDerivativephiComposeCasimirProjection}
\begin{array}{ll}
[D^2(\varphi\circ \pi)(\Omega)](\Theta_1, \Theta_2) & = \sum\limits_{k_1, k_2=1}^{N-1} \dfrac{\partial^2 \varphi(z)}{\partial z_{k_1}\partial z_{k_2}}[D c_{k_1}(\Omega)](\Theta_1)[D c_{k_2}(\Omega)](\Theta_2)
\\
& \qquad + \sum\limits_{k=1}^{N-1} \dfrac{\partial \varphi(z)}{\partial z_k}[D^2 c_k(\Omega)](\Theta_1, \Theta_2) \ , \ z= \pi(\Omega) \ .
\end{array}
\end{equation}

We put $V(\Omega)=(\varphi \circ \pi)(\Omega)$ in (\ref{Eq:EulerArnoldZeitlinTruncatedDissipatedForcedWithFastNoise:EssentialSlowMotionPartGenerator}) and make use of (\ref{Eq:FirstFrechetDerivativephiComposeCasimirProjection}), (\ref{Eq:SecondFrechetDerivativephiComposeCasimirProjection}), to have the explicit form of the operator $\bar{L}$ in (\ref{Eq:AveragingPrincipleEffectiveSlowGenerator}) as
\begin{equation}\label{Eq:AveragingPrincipleEffectiveSlowGeneratorExplicit}
\bar{L}\varphi (z)= \sum\limits_{k=1}^{N-1} \overline{b}_k(z)\dfrac{\partial \varphi(z)}{\partial z_k} + \dfrac{1}{2}\sum\limits_{k_1, k_2=1}^{N-1} \overline{a}_{k_1k_2}(z) \dfrac{\partial^2 \varphi(z)}{\partial z_{k_1}\partial z_{k_2}} \ .
\end{equation}
Here
\begin{equation}\label{Eq:AveragingPrincipleEffectiveSlowGeneratorExplicit:Drift}
\overline{b}_k(z)= \int_{\mathcal{O}^*(z)} L_{\text{slow}} c_k(\Omega) dm_z(\Omega) \ ,
\end{equation}
and
\begin{equation}\label{Eq:AveragingPrincipleEffectiveSlowGeneratorExplicit:Diffusion}
\overline{a}_{k_1k_2}(z)= \int_{\mathcal{O}^*(z)} \Gamma_{\text{slow}} (c_{k_1}, c_{k_2})(\Omega) dm_z(\Omega) \ .
\end{equation}
The integrand $L_{\text{slow}} c_k (\Omega)$ is as in (\ref{Eq:EulerArnoldZeitlinTruncatedDissipatedForcedWithFastNoise:EssentialSlowMotionPartGenerator}) with $V(\Omega)=c_k(\Omega)$. The integrand $\Gamma_{\text{slow}}(c_{k_1}, c_{k_2})(\Omega)$ is defined as
\begin{equation}\label{Eq:AveragingPrincipleEffectiveSlowGeneratorExplicit:GammaSlow}
\begin{array}{ll}
& \Gamma_{\text{slow}}(c_{k_1}, c_{k_2})(\Omega) 
\\
= & \rho^2 \sum\limits_{k=1}^{l} [D c_{k_1}(\Omega)](Y_k(\Omega))[D c_{k_2}(\Omega)](Y_k(\Omega))
\\
& \qquad + \delta^2\sum\limits_{k\in \mathcal{K}_N^+} \left([D c_{k_1}(\Omega)](L_{k, \cos N})[D c_{k_2}(\Omega)](L_{k, \cos N})\right.
\\
& \qquad \qquad \qquad \qquad \left.+[D c_{k_1}(\Omega)](L_{k, \sin, N})[D c_{k_2}(\Omega)](L_{k, \sin, N})\right)
\\
& \qquad + \sum\limits_{k\in \mathcal{K}_N^+, |k|\approx k_f}q_k^2\left([D c_{k_1}(\Omega)](L_{k, \cos N})[D c_{k_2}(\Omega)](L_{k, \cos N})\right.
\\
& \qquad \qquad \qquad \qquad \qquad \left.+[D c_{k_1}(\Omega)](L_{k, \sin, N})[D c_{k_2}(\Omega)](L_{k, \sin, N})\right) \ .
\end{array}
\end{equation}
One can use equations (\ref{Lm:FirstAndSecondFrechetDerivativeCasimirProjection:Eq:FirstDerivative}) and (\ref{Lm:FirstAndSecondFrechetDerivativeCasimirProjection:Eq:SecondDerivative}) in Lemma \ref{Lm:FirstAndSecondFrechetDerivativeCasimirProjection} to replace the terms of $Dc_k(\Omega)$, $D c_{k_1}(\Omega)$, $D c_{k_2}(\Omega)$ in $L_{\text{slow}}$ and $\Gamma_{\text{slow}}$ to further obtain more explicit formulae for $\bar{L}\varphi (z)$ in terms of $\Omega$. 

By (\ref{Eq:AveragingPrincipleEffectiveSlowGeneratorExplicit}), the averaged
process $Z_t^\delta=(Z_{1,t}^{\delta}, ..., Z_{N-1, t}^\delta)$ can be equivalently written in an It\^{o}'s stochastic differential equation form as
\begin{equation}\label{Eq:AveragingPrincipleEffectiveSlowProcessItoFormExplicit}
dZ_t^\delta=\overline{b}(Z_t^\delta)dt+\overline{\sigma}(Z_t^\delta)dw_t \ ,
\end{equation}
where $\overline{b}(z)=(\overline{b}_k(z))_{1\leq k\leq N-1}$ and  $\overline{\sigma}(z)=(\overline{\sigma}_{k_1k_2}(z))_{1\leq k_1, k_2\leq N-1}$ is such that   $\overline{a}(z)=\overline{\sigma(z)}\cdot \overline{\sigma(z)^T}$ with $\overline{a}(z)=(\overline{a}_{k_1k_2}(z))_{1\leq k_1, k_2\leq N-1}$; and $w_t$ is an $(N-1)$-dimensional Brownian motion. The terms $\overline{b}_k(z)$ and $\overline{a}_{k_1k_2}(z)$ are defined as in (\ref{Eq:AveragingPrincipleEffectiveSlowGeneratorExplicit:Drift}), (\ref{Eq:AveragingPrincipleEffectiveSlowGeneratorExplicit:Diffusion}), respectively.

Consider the process $\Omega_t^{\varepsilon, \delta}$ defined in (\ref{Eq:EulerArnoldZeitlinTruncatedDissipatedForcedWithFastNoise:Specific}) and let $\Omega_0^{\varepsilon, \delta}=\Omega_0$ be such that $Z_0=\pi(\Omega_0) \in \mathcal{A}_{\text{regular}}^\circ$, which is the interior of $\mathcal{A}_{\text{regular}}$, i.e., there is an open neighborhood $\mathcal{U}_0$ of $Z_0$ such that $\mathcal{U}_0 \subset \mathcal{A}_{\text{regular}}$. Since $\Omega_t^{\varepsilon, \delta}$ is a strong Markov process on $\su(N)$, we can define  the hitting time 
\begin{equation}\label{Eq:HittingTimeToBoundaryOpenNeighborhoodRegularStrata}
\tau^{\varepsilon, \delta}=\inf\{t\geq 0: \Omega_t^{\varepsilon,\delta}\in \partial (\pi^{-1}(\mathcal{U}_0))\}
\end{equation} 
to be the first time $\Omega_t^{\varepsilon, \delta}$ hits $\partial (\pi^{-1}(\mathcal{U}_0))$.
Similarly, for the process $Z_t^\delta$ defined by the generator (\ref{Eq:AveragingPrincipleEffectiveSlowGeneratorExplicit}) or the It\^{o}'s stochastic differential equation (\ref{Eq:AveragingPrincipleEffectiveSlowProcessItoFormExplicit}), we define the hitting time
\begin{equation}\label{Eq:HittingTimeToBoundaryOpenNeighborhoodRegularCasimir}
\tau^\delta = \inf\{t\geq 0: Z_t^\delta \in \partial \mathcal{U}_0 \}
\end{equation}
to be the first time $Z_t^\delta$ hits $\partial \mathcal{U}_0$. Let $\mathbf{C}_{[0,T]}(\mathcal{A})$ be the space of continuous trajectories on $\mathcal{A}$.

\begin{theorem}[Averaging principle via weak convergence of projected processes]\label{Thm:AveragingPrinciple}
As $\varepsilon \rightarrow 0$, the family of (non-Markov) processes $\zeta_t^{\varepsilon,\delta}=\pi(\Omega^{\varepsilon,\delta}_{t\wedge \tau^{\varepsilon,\delta}})$ converges weakly to $Z_{t\wedge \tau^\delta}^\delta$ in $\mathbf{C}_{[0,T]}(\mathcal{A})$. In other words, for any bounded continuous functional $F$ on $\mathbf{C}_{[0,T]}(\mathcal{A})$ we have, as $\varepsilon \rightarrow 0$, \begin{equation}\label{Thm:AveragingPrinciple:Eq:WeakConvergenceBoundedFunctional}
\mathbf{E}_{\Omega^{\varepsilon,\delta}_0=\Omega_0} F(\pi(\Omega^{\varepsilon,\delta}_{t\wedge \tau^{\varepsilon,\delta}}))\rightarrow \mathbf{E}_{Z_0^\delta=\pi(\Omega_0)} F(Z_{t\wedge \tau^\delta}^\delta) \ .
\end{equation}
\end{theorem}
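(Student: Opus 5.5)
The plan is the standard martingale-problem route to averaging on foliated spaces, in the spirit of \cite[Chapter 7]{[FWBook]} and \cite{[AveragingDiffusionFoliatedSpacesAOP2016]}. It proceeds in three steps: tightness, a corrector (Poisson equation) argument, and identification of the limit.

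\textbf{Step 1: the fast part does not see Casimirs.} First observe that $\pi$ annihilates the fast generator. By Assumption \ref{Assumption:FastVectorMixingFullCoadjoint}, each $X_j$ is tangent to coadjoint orbits, so $[Dc_k(\Omega)](X_j(\Omega))=0$ identically. Differentiating this identity along $X_j$ shows that the second-order Stratonovich correction in (\ref{Eq:EulerArnoldZeitlinTruncatedDissipatedForcedWithFastNoise:FastMotionPartGenerator}) also vanishes on $c_k$. Hence $L_{\text{fast}}(\varphi\circ\pi)=0$ for every $\varphi\in\mathbf{C}^{(2)}_{\mathcal{A}}(\mathbb{R})$. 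By (\ref{Eq:EulerArnoldZeitlinTruncatedDissipatedForcedWithFastNoise:GeneratorFastSlow}), It\^o's formula for $\varphi(\zeta^{\varepsilon,\delta}_t)$ then contains only $L_{\text{slow}}(\varphi\circ\pi)$, with no $\varepsilon^{-2}$ term. The martingale part is also free of $\varepsilon^{-1}$, since the $dW^{(j)}$ integrands $[D(\varphi\circ\pi)](X_j)$ vanish.

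\textbf{Step 2: tightness.} Up to $\tau^{\varepsilon,\delta}$ the process stays in $\pi^{-1}(\overline{\mathcal{U}_0})$. I would first shrink $\mathcal{U}_0$ to be relatively compact in $\mathcal{A}_{\text{regular}}$. Since $\pi$ is proper ($c_1=-\|\Omega\|^2$ controls the norm), this region is compact in $\su(N)$, so the coefficients of $L_{\text{slow}}(c_k)$ and $\Gamma_{\text{slow}}$ are bounded there uniformly in $\varepsilon$. Tightness of $\zeta^{\varepsilon,\delta}$ in $\mathbf{C}_{[0,T]}(\mathcal{A})$ then follows from Kolmogorov or Aldous criteria applied to the stopped semimartingale decomposition of $c_k(\Omega^{\varepsilon,\delta}_{t\wedge\tau^{\varepsilon,\delta}})$.

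\textbf{Step 3: replace $L_{\text{slow}}(\varphi\circ\pi)$ by its leaf average.} This is the core step. Set $g=L_{\text{slow}}(\varphi\circ\pi)-\overline{L_{\text{slow}}(\varphi\circ\pi)}\circ\pi$, which has zero $m_z$-mean on every leaf. On each regular leaf, solve the Poisson equation $L_{\text{fast}}u=-g$. This is solvable because $L_{\text{fast}}$ restricted to the compact orbit $\mathcal{O}^*(z)$ is hypoelliptic by (\ref{Assumption:FastVectorMixingFullCoadjoint:Eq:LieBracketGenerating}) and has the unique invariant measure $m_z$, so Fredholm theory on compact manifolds applies.

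Apply It\^o's formula to $\varepsilon^2u(\Omega^{\varepsilon,\delta}_t)$. This gives
\[
\int_0^{t\wedge\tau^{\varepsilon,\delta}} g(\Omega^{\varepsilon,\delta}_s)\,ds
= -\varepsilon^2\bigl[u(\Omega_{t\wedge\tau^{\varepsilon,\delta}}^{\varepsilon,\delta})-u(\Omega_0)\bigr]
+\varepsilon^2\int_0^{t\wedge\tau^{\varepsilon,\delta}} L_{\text{slow}}u\,ds + \text{martingale}.
\]
The martingale has quadratic variation of order $\varepsilon^2$: the fast integrands carry a factor $\varepsilon^{-1}$, which multiplies $\varepsilon^2$. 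Consequently the right-hand side tends to $0$ in $L^2$ uniformly on $[0,T]$.

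\textbf{Main obstacle.} The hard part is the regularity of $u$ jointly in $(y,z)$. We need $u$ to be $C^2$ across leaves, with bounds uniform on $\pi^{-1}(\overline{\mathcal{U}_0})$, so that $L_{\text{slow}}u$ is bounded. I would obtain this by working in the local product coordinates (\ref{Eq:suNCoadjointFoliationLeafDecomposition}). On regular strata all orbits are diffeomorphic to $SU(N)/T^{N-1}$, so the family $L_{\text{fast}}$ becomes a smooth family of hypoelliptic operators on a fixed compact manifold. Smooth dependence of the Poisson solution on parameters then follows from a uniform spectral gap, obtained by compactness together with continuity of the H\"ormander constants, plus differentiation of the equation in $z$. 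I would also need $z\mapsto m_z$ to be smooth so that the averaged coefficients $\overline{b},\overline{a}$ are smooth.

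\textbf{Step 4: identify the limit.} With Step 3 in hand, every limit point of $\zeta^{\varepsilon,\delta}$ solves the stopped martingale problem for $\bar L$ in (\ref{Eq:AveragingPrincipleEffectiveSlowGeneratorExplicit}) up to exit from $\mathcal{U}_0$. Since $\overline{b},\overline{\sigma}$ are smooth on $\overline{\mathcal{U}_0}$, this martingale problem is well posed. Passing to the limit in the stopping time requires continuity of the exit-time functional at almost every path of $Z^\delta$. I would secure this by choosing $\mathcal{U}_0$ with smooth boundary, using the nondegeneracy of $\overline{a}$ coming from the $\delta$ background noise (which makes $\partial\mathcal{U}_0$ regular for $Z^\delta$), or alternatively by perturbing $\mathcal{U}_0$ slightly. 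Uniqueness of the limit then gives convergence of the whole family, which is (\ref{Thm:AveragingPrinciple:Eq:WeakConvergenceBoundedFunctional}).
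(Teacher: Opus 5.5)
Your proposal is correct and takes the same route as the paper. The paper only invokes the standard martingale-problem averaging principle (Freidlin--Wentzell, Khasminskii, averaging on foliated spaces) for the process stopped on exiting $\pi^{-1}(\mathcal{U}_0)$. Your Steps 1--4 supply the details that citation leaves out: $L_{\text{fast}}(\varphi\circ\pi)=0$, tightness on the compact set $\pi^{-1}(\overline{\mathcal{U}_0})$, the leafwise Poisson corrector (which matches the paper's own argument for Theorem \ref{Thm:BoundedFunctionConvergenceProcessToProjectedAverage}), and identification of the stopped limit through well-posedness of the $\bar{L}$-martingale problem. Your extra requirements that $\overline{\mathcal{U}_0}\subset\mathcal{A}_{\text{regular}}$ and that $\partial\mathcal{U}_0$ be smooth are legitimate, since the statement only asserts the existence of some such neighborhood, and the paper implicitly assumes compactness of $\pi^{-1}(\overline{\mathcal{U}_0})$ anyway.
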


\begin{proof}
The result is a standard application of the averaging principle with fast motion on period components, see \cite[Theorem 3]{[FWAveragingOpenBook]}, \cite{[Khasminskii1968]}, \cite{[AveragingDiffusionFoliatedSpacesAOP2016]}, \cite{[XuemeiLi2008]}, which can be made using martingale problem formulation of Markov processes. Here we apply the averaging principle to the stopped process $\Omega^{\varepsilon,\delta}_{t\wedge \tau^{\varepsilon,\delta}}$ upon hitting $\partial(\pi^{-1}(\mathcal{U}_0))$. Note that since $\mathcal{U}_0\subset \mathcal{A}_{\text{regular}}$, we do not have to consider the averaging behavior near $\mathcal{A}_{\text{singular}}$. 
\end{proof}

We define the process $Z_t$ as $Z_t^\delta$ with $\delta=0$ and set the stopping time \begin{equation}\label{Eq:HittingTimeToBoundaryOpenNeighborhoodRegularCasimir:DeltaVanish}
\tau = \inf\{t\geq 0: Z_t \in \partial \mathcal{U}_0 \}
\end{equation} 
to be the first time $Z_t$ hits $\partial \mathcal{U}_0$. Then we have

\begin{corollary}\label{Cor:WeakConvergenceToZ}
For any bounded continuous functional $F$ on $\mathbf{C}_{[0,T]}(\mathcal{A})$ we have, as $\varepsilon \rightarrow 0$ and then $\delta\rightarrow 0$, \begin{equation}\label{Cor:WeakConvergenceToZ:Eq:WeakConvergenceBoundedFunctional}
\mathbf{E}_{\Omega^{\varepsilon,\delta}_0=\Omega_0} F(\pi(\Omega^{\varepsilon,\delta}_{t\wedge \tau^{\varepsilon,\delta}}))\rightarrow \mathbf{E}_{Z_0=\pi(\Omega_0)} F(Z_{t\wedge \tau}) \ .
\end{equation}
\end{corollary}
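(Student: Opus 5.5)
The plan is to combine Theorem \ref{Thm:AveragingPrinciple} with a second weak-convergence statement as $\delta\to 0$ for the averaged processes on $\mathcal{U}_0$, arguing by iterated limits. Fix a bounded continuous functional $F$ on $\mathbf{C}_{[0,T]}(\mathcal{A})$. By Theorem \ref{Thm:AveragingPrinciple}, for each fixed $\delta>0$ the left-hand side of (\ref{Cor:WeakConvergenceToZ:Eq:WeakConvergenceBoundedFunctional}) converges, as $\varepsilon\to 0$, to $\mathbf{E}_{Z_0^\delta=\pi(\Omega_0)}F(Z^\delta_{t\wedge\tau^\delta})$. It therefore suffices to show that $Z^\delta_{\cdot\wedge\tau^\delta}\Rightarrow Z_{\cdot\wedge\tau}$ in $\mathbf{C}_{[0,T]}(\mathcal{A})$ as $\delta\to 0$.

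First I would record how the coefficients depend on $\delta$. From (\ref{Eq:EulerArnoldZeitlinTruncatedDissipatedForcedWithFastNoise:EssentialSlowMotionPartGenerator}), (\ref{Eq:AveragingPrincipleEffectiveSlowGeneratorExplicit:Drift}), (\ref{Eq:AveragingPrincipleEffectiveSlowGeneratorExplicit:Diffusion}) and (\ref{Eq:AveragingPrincipleEffectiveSlowGeneratorExplicit:GammaSlow}), the averaged drift and diffusion have the form
\begin{equation*}
\overline{b}^\delta(z)=\overline{b}^0(z)+\delta^2\,\overline{b}^{\,\mathfrak{b}}(z) \ , \qquad \overline{a}^\delta(z)=\overline{a}^0(z)+\delta^2\,\overline{a}^{\,\mathfrak{b}}(z) \ ,
\end{equation*}
where the $\mathfrak{b}$-terms are leafwise averages of the Hessian and gradient contributions of the background noise. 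By Lemma \ref{Lm:FirstAndSecondFrechetDerivativeCasimirProjection}, these integrands are polynomials in $\Omega$. On $\overline{\mathcal{U}_0}$ the leaves are compact, and both the leaves and the measures $m_z$ vary smoothly in $z$; I take this smooth dependence as part of the regular-stratum setup. Consequently all coefficients are bounded and continuous on $\overline{\mathcal{U}_0}$, locally Lipschitz in $z$ after the usual modification outside $\mathcal{U}_0$, and $\overline{b}^\delta\to\overline{b}^0$, $\overline{a}^\delta\to\overline{a}^0$ uniformly there.

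Next I would extend the coefficients to all of $\mathbb{R}^{N-1}$ as bounded Lipschitz functions and choose $\overline{\sigma}^\delta$ to be a Lipschitz square root converging uniformly to $\overline{\sigma}^0$. One way is to use the Lipschitz-continuous symmetric square root after adding $\eta I$ and letting $\eta\to 0$; alternatively one can work directly with the martingale problem. Tightness of $\{Z^\delta\}$ follows from the boundedness of the coefficients via Kolmogorov's criterion. Any limit point then solves the martingale problem for $\bar L^0$, because $\bar L^\delta\varphi\to\bar L^0\varphi$ uniformly for $\varphi\in\mathbf{C}^{(2)}$. Since $\overline{b}^0$ and $\overline{\sigma}^0$ are locally Lipschitz, this martingale problem is well posed, so $Z^\delta\Rightarrow Z$ on $[0,T]$.

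The main obstacle is passing to the stopped processes, because the map $\omega\mapsto\omega_{\cdot\wedge\tau(\omega)}$ is not continuous everywhere. I would apply the continuous mapping theorem, which requires that $Z$ a.s. does not touch $\partial\mathcal{U}_0$ without immediately crossing it. My first approach is to pick $\mathcal{U}_0$ with smooth boundary, say a small ball, and to use nondegeneracy of $\overline{a}^0$ in the normal direction, which holds when $\rho>0$ or $q\neq 0$ make $\Gamma_{\text{slow}}$ nondegenerate. If nondegeneracy fails, I would fall back on the standard argument: for all but countably many radii $r$, the exit time from the ball of radius $r$ is a continuity point. Shrinking $\mathcal{U}_0$ accordingly is harmless in this local result. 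Finally, combining the two limits in sequence yields (\ref{Cor:WeakConvergenceToZ:Eq:WeakConvergenceBoundedFunctional}).
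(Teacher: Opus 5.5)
Your proposal is correct and follows the same route as the paper. You apply Theorem~\ref{Thm:AveragingPrinciple} for fixed $\delta$, then let $\delta\to 0$ in the averaged process stopped at $\partial\mathcal{U}_0$, using that $\delta$ enters $\overline{b}$ and $\overline{a}$ only through additive $\delta^2$ terms. The paper simply calls that second step ``clear.'' Your tightness and martingale-problem argument, and your handling of the discontinuity of $\omega\mapsto\omega_{\cdot\wedge\tau(\omega)}$, fill in details it omits; choosing $\mathcal{U}_0$ with a regular boundary is allowed by the paper's local framework and is already implicitly needed for Theorem~\ref{Thm:AveragingPrinciple}.
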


\begin{proof}
By (\ref{Eq:EulerArnoldZeitlinTruncatedDissipatedForcedWithFastNoise:EssentialSlowMotionPartGenerator}), (\ref{Eq:AveragingPrincipleEffectiveSlowGeneratorExplicit}), (\ref{Eq:AveragingPrincipleEffectiveSlowGeneratorExplicit:Drift}), (\ref{Eq:AveragingPrincipleEffectiveSlowGeneratorExplicit:Diffusion}), (\ref{Eq:AveragingPrincipleEffectiveSlowGeneratorExplicit:GammaSlow}) it is clear that as $\delta \rightarrow 0$, the process $Z_t^\delta$ converges weakly to $Z_t$ before the first hitting time to $\partial \mathcal{U}_0$. Thus (\ref{Cor:WeakConvergenceToZ:Eq:WeakConvergenceBoundedFunctional}) is a direct consequence of (\ref{Thm:AveragingPrinciple:Eq:WeakConvergenceBoundedFunctional}).
\end{proof}

Moreover we have

\begin{theorem}[Averaging of observables along the fast coadjoint leaves]\label{Thm:BoundedFunctionConvergenceProcessToProjectedAverage}
For any smooth observable function $G: \su(N)\rightarrow \mathbb{R}$, any $T\geq 0$ we have
\begin{equation}\label{Thm:BoundedFunctionConvergenceProcessToProjectedAverage:Eq}
\lim\limits_{\varepsilon\rightarrow 0}\sup\limits_{0\leq t \leq T}\left|\mathbf{E}_{\Omega_0^{\varepsilon,\delta}=\Omega_0} \int_0^{t\wedge \tau^{\varepsilon,\delta}}G(\Omega_s^{\varepsilon,\delta})ds-\mathbf{E}_{\Omega_0^{\varepsilon,\delta}=\Omega_0} \int_0^{t\wedge \tau^{\varepsilon,\delta}}\overline{G}(\pi(\Omega_s^{\varepsilon,\delta}))ds\right|=0 \ ,
\end{equation}
where $\overline{G}(z)=\displaystyle{\int_{\mathcal{O}^*(z)}}G(\Omega)dm_z(\Omega)$ is defined in \emph{(\ref{Eq:AveragingLeafwiseCoAdjointsuN})}. 
\end{theorem}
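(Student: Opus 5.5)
The plan is the standard Poisson-equation (corrector) argument for averaging. Assume, as in Assumption \ref{Assumption:FastVectorMixingFullCoadjoint}, that on each regular leaf $\mathcal{O}^*(z)$, $z\in\mathcal{U}_0$, the fast generator $L_{\text{fast}}$ in (\ref{Eq:EulerArnoldZeitlinTruncatedDissipatedForcedWithFastNoise:FastMotionPartGenerator}) is hypoelliptic with unique invariant measure $m_z$. Fix $G$ and set $g=G-\overline{G}\circ\pi$; by construction $g$ has zero $m_z$-mean on every leaf. The first step is to solve, leafwise, the Poisson equation
\[
L_{\text{fast}}u(\Omega)=g(\Omega), \quad \Omega\in\pi^{-1}(\overline{\mathcal{U}_0'}),
\]
where $\mathcal{U}_0'$ is a slightly larger relatively compact neighborhood with $\overline{\mathcal{U}_0}\subset\mathcal{U}_0'\subset\mathcal{A}_{\text{regular}}$. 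On each compact leaf the centered Fredholm alternative for the hypoelliptic operator $L_{\text{fast}}$ gives a unique solution of zero $m_z$-mean. One can take $u(\Omega)=-\int_0^\infty \mathbf{E}_\Omega g(\Omega^{\text{fast}}_s)\,ds$, which converges by exponential mixing on compact leaves.

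The main obstacle is proving that $u$ is $C^2$ jointly in $\Omega$, i.e. also in the transverse Casimir directions $z$, with bounds uniform over $\pi^{-1}(\overline{\mathcal{U}_0})$. The paper's hypotheses do not state this explicitly, so I would proceed as follows.

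\begin{itemize}
\item Work in the local product coordinates $(y,z)$ of (\ref{Eq:suNCoadjointFoliationLeafDecomposition}), valid because $\overline{\mathcal{U}_0}\subset\mathcal{A}_{\text{regular}}$. Since all regular orbits in a small neighborhood are diffeomorphic to the flag manifold $SU(N)/T^{N-1}$, $L_{\text{fast}}$ becomes a smooth family of hypoelliptic operators on one fixed compact manifold, parametrized by $z$.
\item The bracket-generating condition (\ref{Assumption:FastVectorMixingFullCoadjoint:Eq:LieBracketGenerating}) is open, so it holds uniformly on the compact parameter set.
\item Use the uniform spectral gap (from uniform Hörmander/subelliptic estimates) together with analytic perturbation of the inverse of $L_{\text{fast}}$ on mean-zero functions. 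Differentiating the Poisson equation in $z$ then yields smoothness and uniform $C^2$ bounds on $u$.
\end{itemize}

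This is the step where I would invoke the cited results \cite{[FWAveragingOpenBook]}, \cite{[AveragingDiffusionFoliatedSpacesAOP2016]}, \cite{[XuemeiLi2008]} rather than redo the estimates. Finally, multiply $u$ by a smooth cutoff equal to $1$ on $\pi^{-1}(\overline{\mathcal{U}_0})$, so that Itô's formula applies globally.

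With $u$ in hand, apply Itô's formula to $u(\Omega^{\varepsilon,\delta}_t)$ up to $t\wedge\tau^{\varepsilon,\delta}$, using the generator (\ref{Eq:EulerArnoldZeitlinTruncatedDissipatedForcedWithFastNoise:GeneratorFastSlow}). The fast noise in (\ref{Eq:EulerArnoldZeitlinTruncatedDissipatedForcedWithFastNoise:Specific}) carries the factor $1/\varepsilon$ on a Stratonovich term, so it contributes $\varepsilon^{-2}L_{\text{fast}}$ to the generator; correspondingly the Itô expansion of Lemma \ref{Lm:StratonovichToIto:X} should be read with its Stratonovich integrand already scaled by $1/\varepsilon$. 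This gives
\[
\mathbf{E}\,u(\Omega^{\varepsilon,\delta}_{t\wedge\tau^{\varepsilon,\delta}})-u(\Omega_0)
=\varepsilon^{-2}\,\mathbf{E}\int_0^{t\wedge\tau^{\varepsilon,\delta}} g(\Omega^{\varepsilon,\delta}_s)\,ds
+\mathbf{E}\int_0^{t\wedge\tau^{\varepsilon,\delta}} L_{\text{slow}}u(\Omega^{\varepsilon,\delta}_s)\,ds .
\]
The stochastic integrals are true martingales because the integrands are bounded before the exit time.

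Multiplying through by $\varepsilon^2$ gives
\[
\Bigl|\mathbf{E}\int_0^{t\wedge\tau^{\varepsilon,\delta}} g(\Omega^{\varepsilon,\delta}_s)\,ds\Bigr|\le \varepsilon^2\bigl(2\|u\|_\infty+T\,\|L_{\text{slow}}u\|_\infty\bigr),
\]
where both sup norms are taken over the compact set $\pi^{-1}(\overline{\mathcal{U}_0})$. There $L_{\text{slow}}u$ is bounded because $u\in C^2$ and the coefficients of $L_{\text{slow}}$ in (\ref{Eq:EulerArnoldZeitlinTruncatedDissipatedForcedWithFastNoise:EssentialSlowMotionPartGenerator}) are continuous. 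The bound is uniform in $t\le T$, so taking $\sup_{0\le t\le T}$ and letting $\varepsilon\to0$ proves (\ref{Thm:BoundedFunctionConvergenceProcessToProjectedAverage:Eq}), at rate $O(\varepsilon^2)$.
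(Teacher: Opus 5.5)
Your proposal is correct and takes the same route as the paper: center $G$ by its leafwise average, solve the leafwise Poisson equation $L_{\text{fast}}u = G-\overline{G}\circ\pi$ on the compact set $\pi^{-1}(\overline{\mathcal{U}_0})$, apply It\^{o}'s formula with generator $\varepsilon^{-2}L_{\text{fast}}+L_{\text{slow}}$ to the stopped process, and bound the result by $C\varepsilon^2$ uniformly in $t\le T$. Your extra treatment of the transverse $C^2$ regularity of the corrector, the cutoff, and the $\varepsilon^{-2}$ scaling just makes explicit what the paper asserts as ``standard solvability of the Poisson equation'', namely that $\varphi_G$ and $L_{\text{slow}}\varphi_G$ are bounded on $K_0$.
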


\begin{proof}
Set
\begin{equation}\label{Thm:BoundedFunctionConvergenceProcessToProjectedAverage:Eq:CenteredG}
H(\Omega)
=
G(\Omega)-\overline G(\pi(\Omega)) \ .
\end{equation}
By (\ref{Eq:AveragingLeafwiseCoAdjointsuN}), for every $z\in \mathcal A_{\mathrm{regular}}$,
we have $\displaystyle{\int_{\mathcal O^*(z)}H(\Omega)dm_z(\Omega)=0}$. Let 
$K_0=\pi^{-1}(\mathcal{U}_0\cup \partial \mathcal{U}_0)$, which is a compact set. By Assumption \ref{Assumption:FastVectorMixingFullCoadjoint} and the standard solvability of the Poisson equation on each coadjoint orbit, there exists a smooth function
$\varphi_G$ on $K_0$ with $\displaystyle{\int_{\mathcal{O}^*(z)}}\varphi(\Omega)dm_z(\Omega)=0$ such that
$$L_{\mathrm{fast}}\varphi_G=H \ .$$
Moreover, on $K_0$, the functions $\varphi_G$ and
$L_{\mathrm{slow}}\varphi_G$ are bounded.

By (\ref{Eq:EulerArnoldZeitlinTruncatedDissipatedForcedWithFastNoise:GeneratorFastSlow}) and applying It\^{o}'s formula to
$\varphi_G(\Omega_{t\wedge\tau^{\varepsilon,\delta}}^{\varepsilon,\delta})$, we get
$$
\varphi_G(\Omega_{t\wedge\tau^{\varepsilon,\delta}}^{\varepsilon,\delta})
-\varphi_G(\Omega_0) =
\int_0^{t\wedge\tau^{\varepsilon,\delta}}
\left[
\frac{1}{\varepsilon^2}L_{\mathrm{fast}}\varphi_G
+
L_{\mathrm{slow}}\varphi_G
\right](\Omega_s^{\varepsilon,\delta})ds
+
M_t^\varepsilon \ ,$$
where $M_t^\varepsilon$ is a martingale. Since
$L_{\mathrm{fast}}\varphi_G=H$, this implies
$$
\int_0^{t\wedge\tau^{\varepsilon,\delta}}
H(\Omega_s^{\varepsilon,\delta})ds =
\varepsilon^2
\left[
\varphi_G(\Omega_{t\wedge\tau^{\varepsilon,\delta}}^{\varepsilon,\delta})
-\varphi_G(\Omega_0)
\right]
-
\varepsilon^2
\int_0^{t\wedge\tau^{\varepsilon,\delta}}
L_{\mathrm{slow}}\varphi_G(\Omega_s^{\varepsilon,\delta})ds
-
\varepsilon^2 M_t^\varepsilon \ .
$$

When taking expectations in the above, the martingale term disappears. Therefore,
$$
\left|
\mathbf{E}_{\Omega_0^{\varepsilon,\delta}=\Omega_0}
\int_0^{t\wedge\tau^{\varepsilon,\delta}}
H(\Omega_s^{\varepsilon,\delta})ds
\right| 
\leq 
\varepsilon^2
\mathbf{E}
\left|
\varphi_G(\Omega_{t\wedge\tau^{\varepsilon,\delta}}^{\varepsilon,\delta})
-\varphi_G(\Omega_0)
\right|
+
\varepsilon^2
\mathbf{E}
\int_0^{t\wedge\tau^{\varepsilon,\delta}}
\left|
L_{\mathrm{slow}}\varphi_G(\Omega_s^{\varepsilon,\delta})
\right|ds \ .
$$

Since $\varphi_G$ and $L_{\mathrm{slow}}\varphi_G$ are bounded on $K_0$, there is a
constant $C=C(G,T,K_0)$ such that, uniformly for $0\leq t\leq T$,
$$
\left|
\mathbf{E}_{\Omega_0^{\varepsilon,\delta}=\Omega_0}
\int_0^{t\wedge\tau^{\varepsilon,\delta}}
H(\Omega_s^{\varepsilon,\delta})ds
\right|
\leq
C\varepsilon^2 \ .
$$
Inserting (\ref{Thm:BoundedFunctionConvergenceProcessToProjectedAverage:Eq:CenteredG}) into the above and letting $\varepsilon\rightarrow 0$ we get (\ref{Thm:BoundedFunctionConvergenceProcessToProjectedAverage:Eq}).
\end{proof}

Combining Theorem \ref{Thm:BoundedFunctionConvergenceProcessToProjectedAverage} with Corollary \ref{Cor:WeakConvergenceToZ}, we get 

\begin{corollary}\label{Cor:BoundedFunctionConvergenceProcessToZ}
For any bounded smooth function $G: \su(N)\rightarrow \mathbb{R}$, any $t>0$ we have, as $\varepsilon\rightarrow 0$ and then $\delta \rightarrow 0$,
\begin{equation}\label{Cor:BoundedFunctionConvergenceProcessToZ:Eq}
\mathbf{E}_{\Omega_0^{\varepsilon,\delta}=\Omega_0} \int_0^{t\wedge \tau^{\varepsilon,\delta}} G(\Omega_s^{\varepsilon,\delta})ds \rightarrow \mathbf{E}_{Z_0=\pi(\Omega_0)} \int_0^{t\wedge \tau} \overline{G}(Z_s)ds \ .
\end{equation}
where $\overline{G}(z)=\displaystyle{\int_{\mathcal{O}^*(z)}}G(\Omega)dm_z(\Omega)$ is defined in \emph{(\ref{Eq:AveragingLeafwiseCoAdjointsuN})}. 
\end{corollary}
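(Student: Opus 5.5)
The plan is to split the quantity in (\ref{Cor:BoundedFunctionConvergenceProcessToZ:Eq}) by a triangle inequality into two differences. The first is
$$\mathbf{E}\int_0^{t\wedge\tau^{\varepsilon,\delta}}G(\Omega^{\varepsilon,\delta}_s)ds-\mathbf{E}\int_0^{t\wedge\tau^{\varepsilon,\delta}}\overline G(\pi(\Omega^{\varepsilon,\delta}_s))ds,$$
which tends to $0$ as $\varepsilon\to0$ for each fixed $\delta$ by Theorem \ref{Thm:BoundedFunctionConvergenceProcessToProjectedAverage}. The second is
$$\mathbf{E}\int_0^{t\wedge\tau^{\varepsilon,\delta}}\overline G(\pi(\Omega^{\varepsilon,\delta}_s))ds-\mathbf{E}_{Z_0=\pi(\Omega_0)}\int_0^{t\wedge\tau}\overline G(Z_s)ds,$$
which I will treat as a weak-convergence statement for the projected stopped process $\zeta^{\varepsilon,\delta}_s=\pi(\Omega^{\varepsilon,\delta}_{s\wedge\tau^{\varepsilon,\delta}})$.

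For the second difference, I write the integral as a path functional of a stopped path. Given a path $\gamma\in\mathbf{C}_{[0,T]}(\mathcal A)$, let $\sigma(\gamma)=\inf\{s:\gamma_s\in\partial\mathcal U_0\}\wedge t$ and set
$$F(\gamma)=\int_0^{\sigma(\gamma)}\overline G(\gamma_s)ds.$$
Because $\zeta^{\varepsilon,\delta}$ is frozen after $\tau^{\varepsilon,\delta}$, we have $F(\zeta^{\varepsilon,\delta})=\int_0^{t\wedge\tau^{\varepsilon,\delta}}\overline G(\pi(\Omega_s^{\varepsilon,\delta}))ds$. In the same way, $F(Z_{\cdot\wedge\tau})$ equals the limiting integral. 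The function $\overline G$ is bounded on $\mathcal U_0\cup\partial\mathcal U_0$ by $\sup|G|$, since each $m_z$ is a probability measure. It is continuous there as well, because $m_z$ depends continuously on $z$ on the regular stratum; this follows from the smooth local product structure $(y,z)$ of Section \ref{Sec:GeometryAlgebraSUN:GeometryAlgebraSUNCoAdjointOrbit} and from the smooth dependence of the invariant measure of the hypoelliptic fast process on its leaf. Hence $F$ is bounded. Corollary \ref{Cor:WeakConvergenceToZ} then gives $\mathbf{E}F(\zeta^{\varepsilon,\delta})\to\mathbf{E}F(Z_{\cdot\wedge\tau})$ as $\varepsilon\to0$ and then $\delta\to0$, provided $F$ may be used in place of a bounded continuous functional.

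The main obstacle is that $F$ is not continuous everywhere: the exit time $\sigma$ is discontinuous at paths that touch $\partial\mathcal U_0$ without crossing it. The fix is to show that $F$ is continuous at almost every path of the law of $Z_{\cdot\wedge\tau}$, and then invoke the continuous mapping theorem, which requires only continuity off a null set of the limit law. Away from the paths where $\sigma$ jumps, $F$ is continuous: if $\gamma^n\to\gamma$ uniformly and $\sigma(\gamma^n)\to\sigma(\gamma)$, then $F(\gamma^n)\to F(\gamma)$ by dominated convergence using boundedness and continuity of $\overline G$. It therefore suffices to show that almost surely $Z$ crosses $\partial\mathcal U_0$ immediately after $\tau$, i.e.\ that $\partial\mathcal U_0$ is regular for the diffusion (\ref{Eq:AveragingPrincipleEffectiveSlowProcessItoFormExplicit}) at $\delta=0$. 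I would make this hold by choosing $\mathcal U_0$ to be a small ball with smooth boundary. Regularity then follows if the averaged diffusion matrix $\overline a$ is nondegenerate in the normal direction on $\partial\mathcal U_0$. The forcing term in (\ref{Eq:AveragingPrincipleEffectiveSlowGeneratorExplicit:GammaSlow}), and possibly the $\rho^2Y$ term, supplies this. If normal nondegeneracy cannot be verified, I would instead note that the time $Z$ spends near a touching is small and bound the discontinuity of $F$ by $\sup|G|$ times that time; this sandwiches $F$ between continuous functionals obtained by shrinking and enlarging $\mathcal U_0$.

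Finally I combine the two pieces. For fixed $\delta$, the first difference vanishes as $\varepsilon\to0$, and the second converges in the iterated limit. This gives (\ref{Cor:BoundedFunctionConvergenceProcessToZ:Eq}). The boundedness of $G$ ensures that all integrands are uniformly bounded by $t\sup|G|$, so no additional tightness or integrability argument is needed beyond what Theorem \ref{Thm:AveragingPrinciple} already provides.
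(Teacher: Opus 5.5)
Your decomposition is the paper's own proof. The paper says only ``apply Theorem~\ref{Thm:BoundedFunctionConvergenceProcessToProjectedAverage}, then Corollary~\ref{Cor:WeakConvergenceToZ} with $F=\overline G$'', and never addresses that $\gamma\mapsto\int_0^{t\wedge\sigma(\gamma)}\overline G(\gamma_s)\,ds$ is not a bounded continuous path functional. You are right to flag this, but your repair fails as written.

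The limit law in Corollary~\ref{Cor:WeakConvergenceToZ} is that of the \emph{stopped} process $Z_{\cdot\wedge\tau}$. Its paths sit at $Z_\tau\in\partial\mathcal U_0$ on $[\tau,t]$ and never cross the boundary, whatever the unstopped diffusion would do. Moreover, $F$ is discontinuous at every such path with $\tau<t$ and $\overline G(Z_\tau)\neq 0$. To see this, take $\gamma=Z_{\cdot\wedge\tau}(\omega)$ and $\gamma^n=\gamma_{\cdot\wedge(\tau-1/n)}$. Then $\gamma^n\to\gamma$ uniformly and $\gamma^n$ stays in $\mathcal U_0$, so $\sigma(\gamma^n)=t$. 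Hence $F(\gamma^n)\to F(\gamma)+(t-\tau)\overline G(Z_\tau)$. So the discontinuity set of $F$ generally has positive limit measure. Regularity of $\partial\mathcal U_0$ for (\ref{Eq:AveragingPrincipleEffectiveSlowProcessItoFormExplicit}) is therefore irrelevant to continuity at stopped paths. Your fallback sandwich only concerns the limit $Z$, whereas what is missing is a bound that holds uniformly over the prelimit paths.

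There are two ways to close this.

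\emph{(i) Enlarge the stopping domain.} Pick an open $\mathcal U_1$ with $\overline{\mathcal U_0}\subset\mathcal U_1$ and $\overline{\mathcal U_1}$ in the interior of $\mathcal A_{\mathrm{regular}}$. Apply Theorem~\ref{Thm:AveragingPrinciple} and Corollary~\ref{Cor:WeakConvergenceToZ} with $\mathcal U_1$ in place of $\mathcal U_0$. Since $\tau^{\varepsilon,\delta}$ precedes exit from $\pi^{-1}(\mathcal U_1)$, your $F$ (with $\sigma$ the first hitting of $\partial\mathcal U_0$) still returns $\int_0^{t\wedge\tau^{\varepsilon,\delta}}\overline G(\pi(\Omega^{\varepsilon,\delta}_s))\,ds$ on the $\mathcal U_1$-stopped projected paths. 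Now the limit paths are \emph{not} frozen at $\partial\mathcal U_0$, so your crossing argument gives a.e.\ continuity. It must be applied at both stages of the iterated limit, i.e.\ for each $Z^\delta$ and for $Z$.

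\emph{(ii) Use a cutoff.} Take a continuous $\chi_\eta$ equal to $1$ at distance $\ge\eta$ from $\partial\mathcal U_0$ and $0$ on $\partial\mathcal U_0$. On stopped paths $\int_0^{t\wedge\sigma}\overline G\chi_\eta(\gamma_s)\,ds=\int_0^t\overline G\chi_\eta(\gamma_s)\,ds$, which is a genuine bounded continuous functional. The remainder is at most $\sup|G|$ times the expected time $\zeta^{\varepsilon,\delta}$ spends in the $\eta$-strip before $\tau^{\varepsilon,\delta}$. This can be bounded uniformly in $\varepsilon$. Because the $X_j$ preserve all Casimirs, $L_{\mathrm{fast}}c_k=0$, so $\pi(\Omega^{\varepsilon,\delta})$ is an It\^o process with coefficients $L_{\mathrm{slow}}c_k$ and $\Gamma_{\mathrm{slow}}(c_{k_1},c_{k_2})$. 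These are free of $\varepsilon$ and, for $\delta>0$, uniformly nondegenerate on $\pi^{-1}(\overline{\mathcal U_0})$. A Krylov-type or barrier estimate then applies.

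Both routes need $\overline a$ to be nondegenerate at $\delta=0$ for the final $\delta\to0$ step. The forcing term supplies this only if some $q_k\neq 0$, which the paper does not assume.
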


\begin{proof}
Apply (\ref{Thm:BoundedFunctionConvergenceProcessToProjectedAverage:Eq}) and then (\ref{Cor:WeakConvergenceToZ:Eq:WeakConvergenceBoundedFunctional}) with  $F=\overline{G}$, we get (\ref{Cor:BoundedFunctionConvergenceProcessToZ:Eq}).
\end{proof}

\subsection{Nonlinear fluxes for energy and enstrophy with their averaged limits}\label{Sec:AveragingFramework:EnergyEnstrophyFlux}

In parallel with (\ref{Eq:Kraichnan2dTurbulence:CumulativeLowFreqEnergyEnstropy}), for given $\Omega\in \su(N)$ with decomposition (\ref{Eq:ZeitlinExpansionVorticityAndStreamMatrices}), we introduce the Zeitlin approximation version of cumulative low-frequency energy and enstrophy as follows:

\begin{equation}\label{Eq:FluxZeitlin:CumulativeLowFreqEnergyEnstropy}
\mathcal{E}_{\leq K}(\Omega) =\dfrac{1}{2}\sum\limits_{|k|\leq K , k\in \mathcal{K}_N}\dfrac{|\Omega_k|^2}{|k|^2} 
\ \ , \ \
\mathcal{Z}_{\leq K}(\Omega) = \dfrac{1}{2}\sum\limits_{|k|\leq K, k\in \mathcal{K}_N}|\Omega_k|^2 \ .
\end{equation}

We consider cross-shell fluxes associated with $\mathcal{E}_{\leq K}$ and $\mathcal{Z}_{\leq K}$, which are the change rates of these quantities carried by the Euler-Arnold nonlinear term $[\Psi, \Omega]$ in (\ref{Eq:EulerArnoldZeitlinTruncatedDissipatedForcedWithFastNoise:Specific}) or in (\ref{Eq:EulerArnoldZeitlinTruncated}). This is in parallel with (\ref{Eq:Kraichnan2dTurbulence:EnergyFlux}) and (\ref{Eq:Kraichnan2dTurbulence:EnstrophyFlux}) that are of the $2$-d fluid case. 

\begin{definition}\emph{(Instantaneous Flux)}\label{Def:FluxZeitlin:InstantaneousFluxEnergyEnstropy}
We define the \emph{instantaneous flux} across the
shell level $K$ via the energy and enstrophy rates carried by the nonlinear dynamics \emph{(\ref{Eq:EulerArnoldZeitlinTruncated})}, i.e. 
\begin{equation}\label{Def:FluxZeitlin:InstantaneousFluxEnergyEnstrophy:Eq}
\Pi_\mathcal{E}(K; \Omega)=-[D\mathcal{E}_{\leq K}(\Omega)]([\Psi, \Omega]) \ , \ 
\Pi_\mathcal{Z}(K; \Omega)=-[D\mathcal{Z}_{\leq K}(\Omega)]([\Psi, \Omega]) \ , \end{equation}
where $[DF(\Omega)](\Theta)=\left.\dfrac{d}{ds}\right|_{s=0} F(\Omega+s\Theta)$ is the Fr\'{e}chet derivative of $F: \su(N)\rightarrow \mathbb{R}$ at $\Omega\in \su(N)$ in the direction $\Theta \in \su(N)$; $\Psi = -A^{-1}\Omega$ and $A$ is defined in \emph{(\ref{Eq:InertiaOperatorZeitlinCase})} and the sign convention is chosen so that a positive flux
corresponds to a net transfer of energy or enstrophy from the low modes
$|k|\leq K$ to the complementary modes $|k|>K$, similar as in \emph{(\ref{Eq:Kraichnan2dTurbulence:EnergyFlux})}, \emph{(\ref{Eq:Kraichnan2dTurbulence:EnstrophyFlux})}.
\end{definition}

\begin{definition}[Fast coadjoint leafwise averaged flux]\label{Def:FluxZeitlin:LeafwiseAveragedFluxEnergyEnstropy}
We define the \emph{fast coadjoint leafwise averaged flux} as applying the leafwise averaging operator \emph{(\ref{Eq:AveragingLeafwiseCoAdjointsuN})} to the instantaneous flux \emph{(\ref{Def:FluxZeitlin:InstantaneousFluxEnergyEnstrophy:Eq})}, i.e.
\begin{equation}\label{Def:FluxZeitlin:LeafwiseAveragedFluxEnergyEnstropy:Eq}
\overline{\Pi}_{\mathcal{E}}(K; z)=\int_{\mathcal{O}^*(z)}\Pi_{\mathcal{E}}(K; \Omega)dm_z(\Omega) \ , \ \overline{\Pi}_{\mathcal{Z}}(K; z)=\int_{\mathcal{O}^*(z)}\Pi_{\mathcal{Z}}(K; \Omega)dm_z(\Omega) \ .
\end{equation}
\end{definition}

Under the same notations as in Corollary \ref{Cor:BoundedFunctionConvergenceProcessToZ}, we can connect the instantaneous flux with the fast coadjoint leafwise averaged flux as the following result.

\begin{theorem}[Averaged flux convergence]\label{Thm:AveragedFluxConvergence}
For any $t>0$, as first $\varepsilon\rightarrow 0$ and then $\delta \rightarrow 0$ we have
\begin{equation}\label{Thm:AveragedFluxConvergence:Eq:Convergence:Energy}
\mathbf{E}_{\Omega_0^{\varepsilon,\delta}=\Omega_0}\int_0^{t\wedge \tau^{\varepsilon,\delta}}\Pi_{\mathcal{E}}(K;\Omega_s^{\varepsilon,\delta})ds \rightarrow \mathbf{E}_{Z_0=\pi(\Omega_0)}\int_0^{t\wedge \tau}\overline{\Pi}_{\mathcal{E}}(K; Z_s)ds \ , \ 
\end{equation}
and
\begin{equation}\label{Thm:AveragedFluxConvergence:Eq:Convergence:Enstrophy}
\mathbf{E}_{\Omega_0^{\varepsilon,\delta}=\Omega_0}\int_0^{t\wedge \tau^{\varepsilon,\delta}}\Pi_{\mathcal{Z}}(K; \Omega_s^{\varepsilon,\delta})ds \rightarrow \mathbf{E}_{Z_0=\pi(\Omega_0)}\int_0^{t\wedge \tau}\overline{\Pi}_{\mathcal{Z}}(K; Z_s)ds \ .
\end{equation}
\end{theorem}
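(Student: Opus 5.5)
The plan is to read Theorem \ref{Thm:AveragedFluxConvergence} as a direct instance of Corollary \ref{Cor:BoundedFunctionConvergenceProcessToZ}, with the observable $G$ taken to be the instantaneous flux $\Pi_{\mathcal{E}}(K;\cdot)$ or $\Pi_{\mathcal{Z}}(K;\cdot)$ from Definition \ref{Def:FluxZeitlin:InstantaneousFluxEnergyEnstropy}. The leafwise average $\overline{G}$ then coincides with $\overline{\Pi}_{\mathcal{E}}(K;\cdot)$ or $\overline{\Pi}_{\mathcal{Z}}(K;\cdot)$ by Definition \ref{Def:FluxZeitlin:LeafwiseAveragedFluxEnergyEnstropy}. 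So the work lies in checking that these observables satisfy the hypotheses of the corollary.

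First, I will record the regularity of the fluxes. Writing $\Omega=\sum_k \Omega_k L_{k,N}$, the functionals $\mathcal{E}_{\leq K}$ and $\mathcal{Z}_{\leq K}$ are quadratic forms in $\Omega$. Their Fr\'echet derivatives are therefore linear in $\Omega$, and $[\Psi,\Omega]=[-A^{-1}\Omega,\Omega]$ is quadratic. Hence $\Pi_{\mathcal{E}}(K;\Omega)$ and $\Pi_{\mathcal{Z}}(K;\Omega)$ are homogeneous cubic polynomials on $\su(N)$, so they are smooth. They are not globally bounded, and this is the one gap relative to the stated hypothesis of Corollary \ref{Cor:BoundedFunctionConvergenceProcessToZ}.

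Second, I will remove that gap by localization, which I expect to be the main point needing care. All processes are stopped at $\tau^{\varepsilon,\delta}$ or $\tau$, so only the values of $G$ on the compact set $K_0=\pi^{-1}(\mathcal{U}_0\cup\partial\mathcal{U}_0)$ matter. This set is compact because $c_1(\Omega)=-\|\Omega\|^2$ is bounded there and $\mathcal{U}_0$ may be taken bounded.

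I will choose a smooth cutoff $\chi$ with $\chi\equiv1$ on a neighbourhood of $K_0$ and compact support, and set $\widetilde G=\chi\,\Pi$. Then $\widetilde G$ is bounded and smooth. Along stopped trajectories, $\widetilde G(\Omega^{\varepsilon,\delta}_s)=\Pi(\Omega^{\varepsilon,\delta}_s)$ for $s\le \tau^{\varepsilon,\delta}$. Since each leaf $\mathcal{O}^*(z)$ with $z\in\overline{\mathcal{U}_0}$ lies inside $K_0$, we also have $\overline{\widetilde G}(z)=\overline{\Pi}(K;z)$ for such $z$, and $Z_s\in\overline{\mathcal U_0}$ for $s\le\tau$.

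Applying Corollary \ref{Cor:BoundedFunctionConvergenceProcessToZ} to $\widetilde G$ then yields (\ref{Thm:AveragedFluxConvergence:Eq:Convergence:Energy}) and (\ref{Thm:AveragedFluxConvergence:Eq:Convergence:Enstrophy}) verbatim. I will also note that the Poisson equation used in Theorem \ref{Thm:BoundedFunctionConvergenceProcessToProjectedAverage} only ever involves $G$ on $K_0$, so the localization is consistent with that proof. The passage $\delta\to0$ inherits from Corollary \ref{Cor:WeakConvergenceToZ} applied to the bounded continuous path functional $\int_0^{t\wedge\tau}\overline{\widetilde G}(\cdot)\,ds$. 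Here a brief remark is needed that this functional is continuous at almost every path of $Z$, given the boundary regularity of $\partial\mathcal{U}_0$. I will assume this as it is implicit in that corollary.
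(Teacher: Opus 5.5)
Your proposal is correct and is essentially the paper's proof. The paper likewise applies Corollary~\ref{Cor:BoundedFunctionConvergenceProcessToZ} to $G=\Pi_{\mathcal{E}}(K;\cdot)$ and $G=\Pi_{\mathcal{Z}}(K;\cdot)$, justifying this by the boundedness of these cubic fluxes on the stopped region over $\mathcal{U}_0$; your cutoff $\widetilde G=\chi\,\Pi$ just makes that localization explicit, and your remark on continuity of the hitting-time path functional flags a point the paper's corollary leaves implicit.
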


\begin{proof}
We can apply Corollary \ref{Cor:BoundedFunctionConvergenceProcessToZ} to $G(\Omega)=\Pi_{\mathcal{E}}(K; \Omega)$ and $G(\Omega)=\Pi_{\mathcal{Z}}(K; \Omega)$, since these functions are bounded within $\mathcal{U}_0$. 
\end{proof}

In parallel with the original stationary nonlinear flux for Kraichnan's scenario, defined in (\ref{Eq:Kraichnan2dTurbulence:EnergyFlux}) and (\ref{Eq:Kraichnan2dTurbulence:EnstrophyFlux}), we also introduce the following

\begin{definition}\emph{(Stationary Flux)}\label{Def:FluxZeitlin:StationaryFluxEnergyEnstropy}
We define the \emph{stationary flux} as the expected instantaneous flux \emph{(\ref{Def:FluxZeitlin:InstantaneousFluxEnergyEnstrophy:Eq})} at $\Omega \sim \mu_{\varepsilon, \delta, \alpha, \nu}^{(N)}$ where $\mu_{\varepsilon, \delta, \alpha, \nu}^{(N)}$ is the invariant measure of $\Omega_t^{\varepsilon ,\delta}$ in \emph{Theorem \ref{Thm:ErgodicityEulerArnoldZeitlinTruncatedDissipatedForcedWithFastNoise}}, i.e.
\begin{equation}\label{Def:FluxZeitlin:StationaryFluxEnergyEnstropy:Eq}
\Pi_\mathcal{E}(K)=\mathbf{E}_{\Omega\sim \mu_{\varepsilon, \delta, \alpha, \nu}^{(N)}}\Pi_\mathcal{E}(K; \Omega)
 \ , \ 
\Pi_\mathcal{Z}(K)=\mathbf{E}_{\Omega\sim \mu_{\varepsilon, \delta, \alpha, \nu}^{(N)}}\Pi_\mathcal{Z}(K; \Omega) \ . 
\end{equation}
\end{definition}

The averaged flux convergence result, i.e., Theorem \ref{Thm:AveragedFluxConvergence} stated above is a local-in-stratum, stopped finite-time result. It should not be confused with a convergence theorem for the stationary fluxes in Definition \ref{Def:FluxZeitlin:StationaryFluxEnergyEnstropy}. Passing to stationary fluxes would require a global averaging result for the invariant measures $\mu^{(N)}_{\varepsilon,\delta,\alpha,\nu}$, including the behavior of the averaged dynamics near the boundary of $\mathcal{A}_{\text{regular}}$  and across possible singular strata parameterized by $\mathcal{A}_{\text{singular}}$. In the present work we do not characterize such boundary conditions. Therefore, the stationary fluxes are introduced as the natural objects corresponding to the Kraichnan setting, while the rigorous averaging statements we have here are formulated only for stopped dynamics inside a regular stratum. We will discuss the boundary setting and the averaging for stationary flux in a future work.

We note that although the above notions of energy and enstrophy fluxes are referring to the cross-shell transfer carried by the Euler-Arnold nonlinearity, in the real dynamics of (\ref{Eq:EulerArnoldZeitlinTruncatedDissipatedForcedWithFastNoise:Specific}), each of the other terms such as the fast stochastic forcing $\mathscr{F}$, the damping $-\alpha\Omega$, the viscosity $\nu(-A)\Omega$, and the external stochastic forcing $f$ all enter the full shell balance of the process and change the shell-wise energy or enstrophy budget. We will analyze the averaged flux contributions of each of these terms in Sections \ref{Sec:HeatingUpAllCoAdjoint} and \ref{Sec:SymmetryBreakingCoAdjoint}.

\section{Fully symmetric heating up of the whole coadjoint orbit} \label{Sec:HeatingUpAllCoAdjoint}

In this section, we consider the full symmetric case, where the fast stochastic forcing directions $X_j(\Omega)$ in (\ref{Eq:EulerArnoldZeitlinTruncatedDissipatedForcedWithFastNoise:FastPart}) are given by Hamiltonian vector fields on the coadjoint orbit, so that they preserve the KKS form and thus the Liouville volume measure $m_{\mathrm{KKS}}$. To this end we make the following assumption throughout this section.

\begin{assumption}[Hamiltonian fast mixing structure on coadjoint orbits]\label{Assumption:FastVectorHamiltonianFullCoadjoint}
Under \emph{Assumption \ref{Assumption:FastVectorMixingFullCoadjoint}}, we further assume that the tangent fast vector fields $X_j(\Omega)$ are Hamiltonian vector fields on coadjoint orbits of $SU(N)$. Given
smooth Hamiltonian functions $h_j$ on $\mathfrak{su}(N)$, we define, by \emph{(\ref{Eq:HamiltonianVectorFieldExplicitFormCoAdjointOrbitSUN})}, that
\begin{equation}\label{Assumption:FastVectorHamiltonianFullCoadjoint:Eq:HamiltonianTangentVector}
X_j(\Omega)=X_{h_j}(\Omega)=[\Omega, \nabla h_j(\Omega)] \ , \ j=1,...,m \ .
\end{equation}
As a special case, when  $h_j(\Omega)=( \beta_j,\Omega)$, this gives
\(X_j(\Omega)=[\Omega, \beta_j]\) for some $\beta_j\in \su(N)$.
\end{assumption}

Assumption \ref{Assumption:FastVectorHamiltonianFullCoadjoint} (which includes Assumption \ref{Assumption:FastVectorMixingFullCoadjoint}) ensures that the $X$ part of the fast stochastic forcing $\mathscr{F}(\Omega_t^{\varepsilon,\delta})dt$ in (\ref{Eq:EulerArnoldZeitlinTruncatedDissipatedForcedWithFastNoise:FastPart}), which is given by the process $\Omega_t^X$ in (\ref{Eq:EulerArnoldZeitlinTruncatedDissipatedForcedWithFastNoise:FastPart:X}), admits the Liouville volume measure $m_{\mathrm{KKS}}$ as the unique invariant measure on the coadjoint orbit. The next Lemma asserts that the leafwise averaged flux of any smooth observable $Q: \su(N)\rightarrow \mathbb{R}$ under the measure $m_{\mathrm{KKS}}$ vanishes.

\begin{lemma}[KKS average of the Euler-Arnold derivative vanishes]\label{Lm:LeafAverageVanishKKSMeasure}
For any $z$ in the interior of $\mathcal{A}_{\text{regular}}$ and any smooth observable $Q: \su(N)\rightarrow \mathbb{R}$, we have
\begin{equation}\label{Lm:LeafAverageVanishKKSMeasure:Eq}
\int_{\mathcal{O}^*(z)}[D Q(\Omega)]([\Psi, \Omega])dm_{\mathrm{KKS},z}(\Omega)=0 \ ,
\end{equation}
where $\Psi = -A^{-1}\Omega$ with the operator $A$ defined as in \emph{(\ref{Eq:InertiaOperatorZeitlinCase})}.
\end{lemma}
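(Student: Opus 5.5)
The plan is to recognize the integrand as the derivative of $Q$ along the Euler--Arnold vector field $V_{\mathscr H}(\Omega)=[\Psi,\Omega]=[\Omega,A^{-1}\Omega]$, and then apply the divergence theorem on the closed symplectic manifold $\mathcal{O}^*(z)$ together with Liouville's theorem.

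\textbf{Step 1: the vector field is Hamiltonian on each orbit.} Take $\mathscr{H}(\Omega)=\tfrac12(\Omega,A^{-1}\Omega)$ as in (\ref{Eq:CoAdjointHamiltonian}). Then $\nabla\mathscr{H}(\Omega)=A^{-1}\Omega$, since $A$ is symmetric with respect to $(\bullet,\bullet)$: it is diagonal in the orthonormal Zeitlin basis with real eigenvalues $|k|^2$, and $|k|=|-k|$. By (\ref{Eq:HamiltonianVectorFieldExplicitFormCoAdjointOrbitSUN}), $X_{\mathscr H}(\Omega)=[\Omega,A^{-1}\Omega]=[-A^{-1}\Omega,\Omega]=[\Psi,\Omega]$. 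This is exactly the field appearing in (\ref{Lm:LeafAverageVanishKKSMeasure:Eq}). It is tangent to $\mathcal{O}^*(z)$ by (\ref{Eq:CoAdjointOrbitTangetSpace}), so $[DQ(\Omega)]([\Psi,\Omega])=\mathcal{L}_{X_{\mathscr H}}(Q|_{\mathcal{O}^*(z)})(\Omega)$ depends only on the restriction $q=Q|_{\mathcal{O}^*(z)}$, which is smooth because the orbit is an embedded submanifold.

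\textbf{Step 2: Liouville invariance.} On the orbit, $X_{\mathscr H}$ preserves $\omega_{\mathrm{KKS}}$. Indeed, $\mathcal{L}_{X}\omega=d\iota_X\omega+\iota_X d\omega=\pm d(d\mathscr{H}|_{\mathcal O})=0$ by (\ref{Eq:HamiltonianEqViaKirillovForm}) and closedness of the KKS form. Hence $X_{\mathscr H}$ preserves $\widehat m_{\mathrm{KKS}}=\omega_{\mathrm{KKS}}^d/d!$, i.e. $\mathrm{div}_{m_{\mathrm{KKS}}}X_{\mathscr H}=0$.

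\textbf{Step 3: integrate.} Let $\mu=\omega_{\mathrm{KKS}}^d/d!$. Then
\[
(X q)\,\mu=\mathcal{L}_X(q\mu)-q\,\mathcal{L}_X\mu=d\big(\iota_X(q\mu)\big),
\]
using Step 2 and $d(q\mu)=0$, since $q\mu$ is a top-degree form. Because $\mathcal{O}^*(z)$ is compact without boundary (Section \ref{Sec:GeometryAlgebraSUN:GeometryAlgebraSUNCoAdjointOrbit}), Stokes' theorem gives $\int_{\mathcal{O}^*(z)}(Xq)\,\mu=0$. Dividing by the total volume yields (\ref{Lm:LeafAverageVanishKKSMeasure:Eq}).

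Equivalently, let $\phi_t$ be the flow of (\ref{Eq:EulerArnoldZeitlinTruncated}). It stays on the orbit and preserves $m_{\mathrm{KKS},z}$, so $\int q\circ\phi_t\,dm$ is constant in $t$, and differentiating at $t=0$ gives the claim. This version avoids forms entirely: one only needs that $\phi_t$ preserves the Liouville measure, which is the standard Liouville theorem for Hamiltonian flows on symplectic manifolds.

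The only point needing care is Step 1, namely the symmetry of $A$ (so that $\nabla\mathscr H=A^{-1}\Omega$) and the sign convention. Any sign ambiguity between $[\Omega,\nabla\mathscr H]$ and $[\nabla\mathscr H,\Omega]$ is harmless, since $-X_{\mathscr H}$ also preserves the Liouville measure. The restriction to $z$ in the interior of $\mathcal{A}_{\text{regular}}$ only serves to make $\mathcal{O}^*(z)$ a smooth orbit of constant dimension; compactness holds for every orbit.
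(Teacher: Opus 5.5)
Your proof is correct and is essentially the paper's argument. The paper also identifies $[\Psi,\Omega]$, via the symmetry of $A$, as the KKS-Hamiltonian vector field of $\mathscr{H}(\Omega)=\tfrac12(\Omega,A^{-1}\Omega)$, uses Liouville invariance of $m_{\mathrm{KKS},z}$ under its flow $\Phi_t$, and differentiates $\int Q\circ\Phi_t\,dm_{\mathrm{KKS},z}$ at $t=0$ on the compact orbit. That is exactly your ``equivalent'' flow version, and your Cartan--Stokes computation in Step 3 is just the infinitesimal form of the same argument.
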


\begin{proof}
Fix \(z\in \mathcal A_{\mathrm{regular}}^\circ\). Then
\(\mathcal O^*(z)\) is a compact smooth coadjoint orbit equipped with the
KKS symplectic form and the corresponding normalized Liouville measure
\(m_{\mathrm{KKS},z}\). Equations (\ref{Eq:CoAdjointHamiltonian}) and (\ref{Eq:HamiltonianEqViaKirillovForm}) have shown the Hamiltonian structure of the Euler-Arnold nonlinear term for general Lie group $G$. In the special case $G=SU(N)$, we consider the Hamiltonian
$\mathscr{H}(\Omega)=\dfrac{1}{2}(\Omega,A^{-1}\Omega)\ , \ \Omega\in \mathcal O^*(z)$.
Since \(A\) is symmetric with respect to the inner product on \(\mathfrak{su}(N)\),
we have
$\nabla \mathscr{H}(\Omega)=A^{-1}\Omega$.
By (\ref{Eq:HamiltonianVectorFieldExplicitFormCoAdjointOrbitSUN}),
the Hamiltonian vector field associated with $\mathscr{H}$ is given by 
$X_{\mathscr{H}}(\Omega)
=
[\Omega,\nabla \mathscr{H}(\Omega)]
=
[\Omega,A^{-1}\Omega]
=
[-A^{-1}\Omega,\Omega]
=
[\Psi,\Omega]$ where we have used $\Psi=-A^{-1}\Omega$.
Thus the vector field \([\Psi,\Omega]\) is Hamiltonian on \(\mathcal O^*(z)\). Let \(\Phi_t\) be the flow generated by \(X_{\mathscr{H}}\) on \(\mathcal O^*(z)\). Hamiltonian
flows preserve the KKS symplectic form and therefore preserve the associated
Liouville measure \(m_{\mathrm{KKS},z}\). Hence, for every smooth observable
\(Q:\mathfrak{su}(N)\to \mathbb R\),
\[
\int_{\mathcal O^*(z)} Q(\Phi_t(\Omega))\,dm_{\mathrm{KKS},z}(\Omega)
=
\int_{\mathcal O^*(z)} Q(\Omega)\,dm_{\mathrm{KKS},z}(\Omega).
\]
Differentiating this identity at \(t=0\), and using the compactness of
\(\mathcal O^*(z)\), this gives
\[
0
=
\int_{\mathcal O^*(z)}
[DQ(\Omega)](X_{\mathscr{H}}(\Omega))\,dm_{\mathrm{KKS},z}(\Omega) \ ,
\]
which is (\ref{Lm:LeafAverageVanishKKSMeasure:Eq}).
\end{proof}

\begin{corollary}\label{Cor:LeafAverageVanishKKSMeasure:EnergyEnstrophy}
For any $z$ in the interior of \emph{$\mathcal{A}_{\text{regular}}$}, as $dm_z(\Omega)=dm_{\mathrm{KKS},z}(\Omega)$, the fast coadjoint leafwise averaged fluxes \emph{(\ref{Def:FluxZeitlin:LeafwiseAveragedFluxEnergyEnstropy:Eq})} for energy and enstrophy both vanish, i.e.
\begin{equation}\label{Cor:LeafAverageVanishKKSMeasure:EnergyEnstrophy:Eq}
\overline{\Pi}_{\mathcal{E}}(K; z)=\overline{\Pi}_{\mathcal{Z}}(K; z)=0 \ .
\end{equation}
\end{corollary}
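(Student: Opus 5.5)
This corollary follows by applying Lemma \ref{Lm:LeafAverageVanishKKSMeasure} to the observables $Q=\mathcal{E}_{\leq K}$ and $Q=\mathcal{Z}_{\leq K}$ from (\ref{Eq:FluxZeitlin:CumulativeLowFreqEnergyEnstropy}). The first step is to check that these are admissible observables, i.e.\ smooth real-valued functions on $\su(N)$. I would expand $\Omega=\sum_k \Omega_k L_{k,N}$ as in (\ref{Eq:ZeitlinExpansionVorticityAndStreamMatrices}) and note that each coefficient $\Omega_k$ is a linear functional of $\Omega$. By (\ref{Eq:OrthogonalBasisZeitlinInnerProduct}) one has $\Omega_k=c_N^{-1}(\Omega,L_{-k,N})$, extended complex-bilinearly. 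Reality of $\Omega$, together with $L_{k,N}^*=-L_{-k,N}$, gives $\Omega_{-k}=\overline{\Omega_k}$. Hence $|\Omega_k|^2=\Omega_k\Omega_{-k}$ is a real quadratic form in $\Omega$, and equivalently a sum of squares of the real coordinates $\omega_p$ against the orthonormal basis (\ref{Eq:BasisInsuNOrthonormal}). So $\mathcal{E}_{\leq K}$ and $\mathcal{Z}_{\leq K}$ are real quadratic polynomials on $\su(N)$, hence smooth.

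Next I would use Definition \ref{Def:FluxZeitlin:InstantaneousFluxEnergyEnstropy}, which gives $\Pi_{\mathcal{E}}(K;\Omega)=-[D\mathcal{E}_{\leq K}(\Omega)]([\Psi,\Omega])$ and likewise for $\mathcal{Z}$. Under the hypothesis $dm_z=dm_{\mathrm{KKS},z}$, Definition \ref{Def:FluxZeitlin:LeafwiseAveragedFluxEnergyEnstropy} then reads
\[
\overline{\Pi}_{\mathcal{E}}(K;z)=-\int_{\mathcal{O}^*(z)}[D\mathcal{E}_{\leq K}(\Omega)]([\Psi,\Omega])\,dm_{\mathrm{KKS},z}(\Omega).
\]
This integral vanishes by (\ref{Lm:LeafAverageVanishKKSMeasure:Eq}) with $Q=\mathcal{E}_{\leq K}$, since $z\in\mathcal{A}^\circ_{\text{regular}}$. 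The same argument with $Q=\mathcal{Z}_{\leq K}$ gives $\overline{\Pi}_{\mathcal{Z}}(K;z)=0$.

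The only point needing care, and it is minor, is the identification $dm_z=dm_{\mathrm{KKS},z}$. Under Assumption \ref{Assumption:FastVectorHamiltonianFullCoadjoint}, each $X_j$ is Hamiltonian and therefore preserves the Liouville measure, so $m_{\mathrm{KKS},z}$ is invariant for $\Omega^X_t$. By the bracket-generating condition (\ref{Assumption:FastVectorMixingFullCoadjoint:Eq:LieBracketGenerating}) in Assumption \ref{Assumption:FastVectorMixingFullCoadjoint}, the invariant measure on the orbit is unique, so it must equal $m_{\mathrm{KKS},z}$. The corollary, however, states this identification as a hypothesis, so it can simply be invoked. The proof is then immediate.
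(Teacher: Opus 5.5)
Your proposal is correct and follows the paper's own proof, which applies Lemma \ref{Lm:LeafAverageVanishKKSMeasure} to $Q=\mathcal{E}_{\leq K}$ and $Q=\mathcal{Z}_{\leq K}$. Your extra checks are sound but not needed: that these observables are smooth real quadratic forms via $\Omega_{-k}=\overline{\Omega_k}$, and that $m_{\mathrm{KKS},z}$ is the unique fast invariant measure.
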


\begin{proof}
We apply Lemma \ref{Lm:LeafAverageVanishKKSMeasure} to $Q(\Omega)=\mathcal{E}_{\leq K}(\Omega)$ and $Q(\Omega)=\mathcal{Z}_{\leq K}(\Omega)$ defined in (\ref{Eq:FluxZeitlin:CumulativeLowFreqEnergyEnstropy}).
\end{proof}

\begin{corollary}\label{Cor:AveragedNonlinearFluxConvergenceZero}
For any $t>0$, we have
\begin{equation}\label{Cor:AveragedNonlinearFluxConvergenceZero:Eq}
\lim\limits_{\delta\rightarrow 0}\lim\limits_{\varepsilon\rightarrow 0}\mathbf{E}_{\Omega_0^{\varepsilon,\delta}=\Omega_0}\int_0^{t\wedge \tau^{\varepsilon,\delta}}\Pi_\mathcal{E}(K; \Omega_s^{\varepsilon,\delta})ds=\lim\limits_{\delta\rightarrow 0}\lim\limits_{\varepsilon\rightarrow 0}\mathbf{E}_{\Omega_0^{\varepsilon,\delta}=\Omega_0}\int_0^{t\wedge \tau^{\varepsilon,\delta}}\Pi_\mathcal{Z}(K; \Omega_s^{\varepsilon,\delta})ds=0 \ .
\end{equation}
\end{corollary}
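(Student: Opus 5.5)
The corollary follows by combining Theorem \ref{Thm:AveragedFluxConvergence} with Corollary \ref{Cor:LeafAverageVanishKKSMeasure:EnergyEnstrophy}. The plan is to pass to the averaged limit and then observe that the averaged integrand vanishes identically along the trajectory of the limit process.

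First, I check that the hypotheses of Theorem \ref{Thm:AveragedFluxConvergence} hold under Assumption \ref{Assumption:FastVectorHamiltonianFullCoadjoint}. The fast fields $X_j=[\Omega,\nabla h_j(\Omega)]$ are tangent to coadjoint orbits and bracket generating, since Assumption \ref{Assumption:FastVectorMixingFullCoadjoint} is included. Being Hamiltonian for the KKS form, they preserve the Liouville measure $m_{\mathrm{KKS},z}$ on each leaf, because their flows are symplectomorphisms. By hypoellipticity, the invariant measure of $\Omega^X_t$ on $\mathcal{O}^*(z)$ is unique, so it equals $m_{\mathrm{KKS},z}$. Hence in the averaging operator (\ref{Eq:AveragingLeafwiseCoAdjointsuN}) we may take $dm_z=dm_{\mathrm{KKS},z}$ for every $z\in\mathcal{U}_0\subset\mathcal{A}_{\text{regular}}$.

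The one point that needs a short argument is that the Stratonovich generator $L_{\mathrm{fast}}$ in (\ref{Eq:EulerArnoldZeitlinTruncatedDissipatedForcedWithFastNoise:FastMotionPartGenerator}) is $\frac12\sum_j X_j^2$. Its formal adjoint with respect to $m_{\mathrm{KKS},z}$ is again $\frac12\sum_j X_j^2$, because each $X_j$ is divergence-free for that measure. So $L_{\mathrm{fast}}^*1=0$, which gives invariance, and uniqueness follows from the bracket-generating condition.

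Next, I apply Theorem \ref{Thm:AveragedFluxConvergence}. It gives that the iterated limit of the left-hand sides in (\ref{Cor:AveragedNonlinearFluxConvergenceZero:Eq}) equals
\[
\mathbf{E}_{Z_0=\pi(\Omega_0)}\int_0^{t\wedge\tau}\overline{\Pi}_{\mathcal{E}}(K;Z_s)\,ds
\]
and the analogous expression for $\mathcal{Z}$. For $s<\tau$ the process $Z_s$ stays in $\mathcal{U}_0$, which lies in the interior of $\mathcal{A}_{\text{regular}}$; at $s=\tau$ it lies on $\partial\mathcal{U}_0$, a single time point that does not affect the integral. Corollary \ref{Cor:LeafAverageVanishKKSMeasure:EnergyEnstrophy}, which rests on Lemma \ref{Lm:LeafAverageVanishKKSMeasure} applied to $Q=\mathcal{E}_{\le K}$ and $Q=\mathcal{Z}_{\le K}$, then gives $\overline{\Pi}_{\mathcal{E}}(K;Z_s)=\overline{\Pi}_{\mathcal{Z}}(K;Z_s)=0$ for a.e.\ $s$ in $[0,t\wedge\tau)$. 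So both integrands vanish pathwise and both expectations are $0$.

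I expect the main obstacle to be the identification of the fast invariant measure with $m_{\mathrm{KKS},z}$, and I would handle it through the divergence-free argument above. A secondary technical point is that the fluxes are polynomial rather than bounded. Theorem \ref{Thm:AveragedFluxConvergence} already covers this, since the dynamics are stopped on the compact set $\pi^{-1}(\overline{\mathcal{U}_0})$, where the fluxes are bounded.
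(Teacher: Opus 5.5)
Your proposal is correct and follows the paper's proof, which simply combines Theorem~\ref{Thm:AveragedFluxConvergence} with Corollary~\ref{Cor:LeafAverageVanishKKSMeasure:EnergyEnstrophy}. Your argument that $L_{\mathrm{fast}}=\frac12\sum_j X_j^2$ is formally self-adjoint with respect to $m_{\mathrm{KKS},z}$, with uniqueness coming from the bracket-generating condition, fills in the identification of the fast invariant measure with $m_{\mathrm{KKS},z}$, which the paper only asserts after Assumption~\ref{Assumption:FastVectorHamiltonianFullCoadjoint}.
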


\begin{proof}
This is a direct consequence by combining Corollary \ref{Cor:LeafAverageVanishKKSMeasure:EnergyEnstrophy} with Theorem \ref{Thm:AveragedFluxConvergence}.
\end{proof}

To investigate the cross-shell transfer of energy and enstrophy contributed by each of the other terms in (\ref{Eq:EulerArnoldZeitlinTruncatedDissipatedForcedWithFastNoise:Specific}), we first establish the following general orbit average result using the idea of Schur's Lemma from representation theory. 

\begin{lemma}[Orbit average of quadratic form on $\su(N)$]\label{Lm:OrbitAveragesuN} 
Let $\mathcal{O}^*(\Lambda)$ be the $\su(N)$ coadjoint orbit passing through $\Lambda=\text{diag}(i\lambda_1,...,i\lambda_N)$ for $\Lambda\in \su(N)$, as defined in \emph{(\ref{Eq:CoAdjointOrbitsuNIsospectral})}. For any $\Omega\in \mathcal{O}^*(\Lambda)$, consider the decomposition \emph{(\ref{Eq:ZeitlinExpansionVorticityAndStreamMatrices})}. Then we have
\begin{equation}\label{Lm:OrbitAveragesuN:Eq:Average}
\int_{\mathcal{O}^*(\Lambda)}\Omega_k\Omega_l dm_{\mathrm{KKS}}(\Omega)=\dfrac{\|\Lambda\|^2}{c_N(N^2-1)}\delta_{k+l, 0} \ ,
\end{equation}
where $dm_{\mathrm{KKS}}(\Omega)=dm_{\mathrm{KKS}, \mathcal{O}^*(\Lambda)}(\Omega)$ is the KKS measure on $\mathcal{O}^*(\Lambda)$ and $\| \Lambda \|^2=(\Lambda, \Lambda)=-\operatorname{tr}(\Lambda^2)$ as in \emph{(\ref{Eq:suNinnerproduct})}, and $c_N>0$ is the normalizing constant defined in \emph{(\ref{Eq:OrthogonalBasisZeitlinInnerProduct})}. 
\end{lemma}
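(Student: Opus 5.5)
We plan to use the fact, recorded in Section \ref{Sec:Background:EulerArnoldGeneralLieGroup}, that the normalized KKS measure on $\mathcal{O}^*(\Lambda)$ is the push-forward of normalized Haar measure under $g\mapsto g\Lambda g^{-1}$. The key object is the symmetric bilinear form
\[
\mathcal{B}(U,V)=\int_{\mathcal{O}^*(\Lambda)}(\Omega,U)(\Omega,V)\,dm_{\mathrm{KKS}}(\Omega)=\int_{SU(N)}(g\Lambda g^{-1},U)(g\Lambda g^{-1},V)\,dg_{\mathrm{Haar}} \ , \quad U,V\in\su(N).
\]
We extend it complex-bilinearly to $\su(N;\mathbb{C})=\mathfrak{sl}(N;\mathbb{C})$, where $(A,B)=-\operatorname{tr}(AB)$ is extended by the same formula.

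\textbf{Step 1: invariance.} By bi-invariance of Haar measure and $Ad$-invariance of $(\bullet,\bullet)$, we have $\mathcal{B}(hUh^{-1},hVh^{-1})=\mathcal{B}(U,V)$ for all $h\in SU(N)$. Writing $\mathcal{B}(U,V)=(TU,V)$ defines a symmetric operator $T$ on $\su(N)$ that commutes with every $Ad_h$.

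\textbf{Step 2: Schur's lemma.} The adjoint representation of $SU(N)$ on $\su(N)$ is irreducible, and it stays irreducible after complexification because $\mathfrak{sl}(N;\mathbb{C})$ is simple. Schur's lemma then gives $T=c\,\mathrm{Id}$. This is the main step. We expect no real obstacle here, only the need to justify absolute irreducibility, which we would do by citing simplicity of $\mathfrak{sl}(N;\mathbb{C})$. Alternatively, since $T$ is real symmetric, each eigenspace of $T$ is a real $Ad$-invariant subspace, hence an ideal of the simple real Lie algebra $\su(N)$. So $T$ has a single eigenvalue, and complexification is not needed.

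\textbf{Step 3: the constant.} We take the trace over the orthonormal basis $\{Z_p\}$ of (\ref{Eq:OrthonormalBasisObtainedFromZeitlinsuNBasis}):
\[
c(N^2-1)=\sum_p \mathcal{B}(Z_p,Z_p)=\int\|\Omega\|^2\,dm_{\mathrm{KKS}}=\|\Lambda\|^2 ,
\]
because $\|g\Lambda g^{-1}\|=\|\Lambda\|$ on the whole orbit. Hence $c=\|\Lambda\|^2/(N^2-1)$.

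\textbf{Step 4: extracting coefficients.} Expand $\Omega=\sum_k\Omega_kL_{k,N}$ as in (\ref{Eq:ZeitlinExpansionVorticityAndStreamMatrices}). By (\ref{Eq:OrthogonalBasisZeitlinInnerProduct}), $(\Omega,L_{-k,N})=c_N\Omega_k$, so $\Omega_k=c_N^{-1}(\Omega,L_{-k,N})$. Then
\[
\int\Omega_k\Omega_l\,dm_{\mathrm{KKS}}=c_N^{-2}\,\mathcal{B}(L_{-k,N},L_{-l,N})=c_N^{-2}\,c\,(L_{-k,N},L_{-l,N})=c_N^{-2}\,c\,c_N\,\delta_{k+l,0}.
\]
This equals $\dfrac{\|\Lambda\|^2}{c_N(N^2-1)}\delta_{k+l,0}$, which is (\ref{Lm:OrbitAveragesuN:Eq:Average}).

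The step that needs care is the bookkeeping of normalizations and of complex bilinearity. We must use the bilinear rather than the sesquilinear extension, so that $\delta_{k+l,0}$ appears rather than $\delta_{kl}$. We must also keep (\ref{Eq:OrthogonalBasisZeitlinInnerProduct}) consistent with the sign convention $L_{k,N}^*=-L_{-k,N}$.
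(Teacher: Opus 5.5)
Your proposal is correct and takes essentially the same route as the paper. Your operator $T$, defined by $\mathcal{B}(U,V)=(TU,V)$, is exactly the paper's intertwining operator $C_\Lambda(X)=\int_{\mathcal{O}^*(\Lambda)}(\Omega,X)\Omega\,dm_{\mathrm{KKS}}(\Omega)$. The paper shows this operator is scalar by precisely your ``alternative'' argument: self-adjointness, then $Ad$-invariant eigenspaces are ideals of the simple algebra $\su(N)$. It then fixes the constant by the same trace identity $\int\|\Omega\|^2\,dm_{\mathrm{KKS}}=\|\Lambda\|^2$, and extracts $\int\Omega_k\Omega_l\,dm_{\mathrm{KKS}}$ by complex-bilinear extension with $X=L_{-k,N}$, $Y=L_{-l,N}$, just as in your Step 4.
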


\begin{proof}
For any $X, Y\in \su(N)$, we claim that
\begin{equation}\label{Lm:OrbitAveragesuN:Eq:QuadraticAverageIntegral}
\int_{\mathcal{O}^*(\Lambda)} (\Omega, X)(\Omega, Y)dm_{\mathrm{KKS}}(\Omega) = \dfrac{\|\Lambda\|^2}{N^2-1}(X,Y) \ .
\end{equation}
Given (\ref{Lm:OrbitAveragesuN:Eq:QuadraticAverageIntegral}), by complex bilinearity we extend it to $\su(N; \mathbb{C})$ and let $X=L_{-k,N}$, $Y=L_{-l, N}$. Then $(\Omega, X)=\Omega_k (L_{k,N}, L_{-k,N})=c_N\Omega_k$ and similarly $(\Omega, Y)=c_N\Omega_l$. Since $(L_{-k, N}, L_{-l, N})=c_N\delta_{-k-l, 0}=c_N\delta_{k+l, 0}$, we arrive at (\ref{Lm:OrbitAveragesuN:Eq:Average}). 

It remains to prove (\ref{Lm:OrbitAveragesuN:Eq:QuadraticAverageIntegral}). Define the linear map $C_\Lambda:\ \su(N)\rightarrow \su(N)$ by $$C_\Lambda(X)=\displaystyle{\int_{\mathcal{O}^*(\Lambda)}(\Omega,X)\Omega dm_{\mathrm{KKS}}(\Omega)} \ .$$
Then for any $g\in SU(N)$ we have 
$$\begin{array}{lll}
& C_\Lambda(gXg^{-1}) &
\\
= & \displaystyle{\int_{\mathcal{O}^*(\Lambda)}} (\Omega, gXg^{-1})\Omega dm_{\mathrm{KKS}}(\Omega)&
\\
= & \displaystyle{\int_{\mathcal{O}^*(\Lambda)}} (g^{-1}\Omega g, X)\Omega dm_{\mathrm{KKS}}(\Omega) & 
\\
= & \displaystyle{\int_{\mathcal{O}^*(\Lambda)}} (\Omega', X)g\Omega'g^{-1} dm_{\mathrm{KKS}}(g\Omega'g^{-1}) & (\text{change of variable } \Omega'=g^{-1}\Omega g)
\\
= & \displaystyle{\int_{\mathcal{O}^*(\Lambda)}} g(\Omega', X)\Omega'g^{-1} dm_{\mathrm{KKS}}(\Omega') & (dm_{\mathrm{KKS}}(\Omega') \text{ is invariant under } \Omega' \rightarrow g^{-1}\Omega' g)
\\
= & g C_\Lambda(X) g^{-1} \ . &
\end{array}$$
So $C_\Lambda$ is an intertwining operator (see \cite[Section 4.8]{[RepresentationTheoryGTM222]}) of the adjoint representation, i.e. $C_\Lambda \circ Ad_g = Ad_g \circ C_\Lambda$ for any $g\in SU(N)$.  Using this fact, an argument similar to the proof of Schur's Lemma (see \cite[Theorem 4.26]{[RepresentationTheoryGTM222]}, \cite[Section 8.2, Theorem 1]{[KirillovRepresentationBook]}) helps to prove that \begin{equation}\label{Lm:OrbitAveragesuN:Eq:IntertwiningCIdentity}
C_\Lambda=\dfrac{\|\Lambda\|^2}{N^2-1}I \ ,
\end{equation}
which directly leads to (\ref{Lm:OrbitAveragesuN:Eq:QuadraticAverageIntegral}). 

To show (\ref{Lm:OrbitAveragesuN:Eq:IntertwiningCIdentity}), we first notice that $C_\Lambda$ is self-adjoint, as we have 
$$\begin{array}{lll}
(C_\Lambda X, Y) & =\displaystyle{\int_{\mathcal{O}^*(\Lambda)}}(\Omega, X)(\Omega, Y)dm_{\mathrm{KKS}}(\Omega)& 
\\
& =\displaystyle{\int_{\mathcal{O}^*(\Lambda)}}(\Omega, Y)(\Omega, X)dm_{\mathrm{KKS}}(\Omega)& =(X, C_\Lambda Y) \ .
\end{array}$$ Therefore $C_\Lambda$ is real-diagonalizable on $\su(N)$, i.e., $\su(N)$ can be decomposed into eigenspaces of $C_\Lambda$ with real eigenvalues. Let $E_\lambda \subseteq \su(N)$ be an eigenspace of $C_\Lambda$ with real eigenvalue $\lambda$. Then if $X\in E_\lambda$, we must have $C_\Lambda (gXg^{-1})=\lambda gXg^{-1}$, i.e., $g E_\lambda g^{-1}=E_\lambda$. If we let $g=g(t)=e^{tY}$ for some $Y\in \su(N)$, then $e^{tY}E_\lambda e^{-tY}=E_\lambda$ gives $[Y, X]\in E_\lambda$ for any $X\in E_\lambda$, which means that $E_\lambda$ is an $\su(N)$ ideal. Since $\su(N)$ is a simple Lie algebra, the only nontrivial ideal is $\su(N)$ itself. So this yields $E_\lambda = \{0\}$ or $E_\lambda = \su(N)$. This means that we must have $C_\Lambda = c_\Lambda I$ for some constant $c_\Lambda$.

The constant $c_\Lambda$ can be determined as follows. First we have $\operatorname{Tr}_{\su(N)}(C_\Lambda)=c_\Lambda\operatorname{Tr}_{\su(N)}(I)=c_\Lambda \text{dim}(\su(N))=(N^2-1)c_\Lambda$. On the other hand, 
$$\begin{array}{ll}
\operatorname{Tr}_{\su(N)}(C_\Lambda) & =\displaystyle{\int_{\mathcal{O}^*(\Lambda)}}\operatorname{Tr}_{\su(N)}(X\mapsto (\Omega, X)\Omega) dm_{\mathrm{KKS}}(\Omega)
\\
& =\displaystyle{\int_{\mathcal{O}^*(\Lambda)}}\|\Omega\|^2 dm_{\mathrm{KKS}}(\Omega)
\\
& =\|\Lambda\|^2 \ ,
\end{array}$$
since for any $\Omega\in \mathcal{O}^*(\Lambda)$ we have $\Omega=g\Lambda g^{-1}$ for some $g\in SU(N)$, yielding $\|\Omega\|^2=\|\Lambda\|^2$. Therefore $(N^2-1)c_\Lambda=\|\Lambda\|^2$, giving (\ref{Lm:OrbitAveragesuN:Eq:IntertwiningCIdentity}).
\end{proof}

Lemma \ref{Lm:OrbitAveragesuN} helps us to calculate the orbit average of cumulative low-frequency energy and enstrophy as the following 

\begin{corollary}\label{Cor:OrbitAverageCumulativeEnergyEnstrophy}
We have
\begin{equation}\label{Cor:OrbitAverageCumulativeEnergyEnstrophy:Eq}
\begin{array}{ll}
\displaystyle{\int_{\mathcal{O}^*(\Lambda)}}\mathcal{E}_{\leq K}(\Omega)dm_{\mathrm{KKS}}(\Omega) & = \dfrac{\|\Lambda\|^2}{2c_N(N^2-1)}\cdot \sum\limits_{0<|k|\leq K} \dfrac{1}{|k|^2} \ ,
\\
\displaystyle{\int_{\mathcal{O}^*(\Lambda)}}\mathcal{Z}_{\leq K}(\Omega)dm_{\mathrm{KKS}}(\Omega) & = \dfrac{\|\Lambda\|^2}{2c_N(N^2-1)}\cdot \left|\{k: 0<|k|\leq K\}\right| \ .
\end{array}
\end{equation}
Here the cumulative energy and enstrophy $\mathcal{E}_{\leq K}(\Omega)$, $\mathcal{Z}_{\leq K}(\Omega)$ are defined as in \emph{(\ref{Eq:FluxZeitlin:CumulativeLowFreqEnergyEnstropy})}.
\end{corollary}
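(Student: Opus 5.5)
The plan is to integrate the definitions in (\ref{Eq:FluxZeitlin:CumulativeLowFreqEnergyEnstropy}) term by term against $m_{\mathrm{KKS}}$ and then apply Lemma \ref{Lm:OrbitAveragesuN} to each quadratic term $|\Omega_k|^2$. The only preliminary point is to express $|\Omega_k|^2$ as a product $\Omega_k\Omega_l$ of the type covered by that lemma.

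\textbf{Step 1 (rewrite $|\Omega_k|^2$).} For $\Omega\in\su(N)$, use $L_{k,N}^*=-L_{-k,N}$ from Section \ref{Sec:GeometryAlgebraSUN:RealOrthonormalBasisZeitlinApproximation}. Applying $\Omega^*=-\Omega$ to (\ref{Eq:ZeitlinExpansionVorticityAndStreamMatrices}) gives
\[
\sum_k \overline{\Omega_k}\,(-L_{-k,N})=-\sum_k\Omega_k L_{k,N} .
\]
By uniqueness of coefficients in the basis $\{L_{k,N}\}$ of $\su(N;\mathbb{C})$, this yields $\Omega_{-k}=\overline{\Omega_k}$. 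Hence $|\Omega_k|^2=\Omega_k\Omega_{-k}$.

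\textbf{Step 2 (apply the orbit-average lemma).} Take $l=-k$ in (\ref{Lm:OrbitAveragesuN:Eq:Average}), so that $\delta_{k+l,0}=1$. This gives
\[
\int_{\mathcal{O}^*(\Lambda)}|\Omega_k|^2\,dm_{\mathrm{KKS}}(\Omega)=\frac{\|\Lambda\|^2}{c_N(N^2-1)}\quad\text{for every } k\in\mathcal{K}_N .
\]
The value is the same for every mode $k$, which is the key consequence of the Schur-type argument.

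\textbf{Step 3 (sum the finite series).} The sums in (\ref{Eq:FluxZeitlin:CumulativeLowFreqEnergyEnstropy}) are finite, so integral and sum can be interchanged. Multiplying the result of Step 2 by $\tfrac12|k|^{-2}$ for the energy, or by $\tfrac12$ for the enstrophy, and summing over $0<|k|\le K$, $k\in\mathcal{K}_N$, gives the two formulas in (\ref{Cor:OrbitAverageCumulativeEnergyEnstrophy:Eq}). In the enstrophy case the sum simply counts the modes with $0<|k|\le K$.

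\textbf{Main obstacle.} The only delicate step is the bookkeeping. First, $|k|$ has to be read through a fixed set of representatives of $(\mathbb{Z}\bmod N)^2$, the same convention already used in (\ref{Eq:InertiaOperatorZeitlinCase}). Second, (\ref{Eq:OrthogonalBasisZeitlinInnerProduct}) and Lemma \ref{Lm:OrbitAveragesuN} must be used with the complex-bilinear extension of $(\cdot,\cdot)$, not a sesquilinear one. This is exactly why the reality relation $\Omega_{-k}=\overline{\Omega_k}$ of Step 1 is needed, rather than writing $|\Omega_k|^2$ as $(\Omega,X)\overline{(\Omega,X)}$. With these conventions fixed, the computation goes through directly.
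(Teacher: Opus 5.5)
Your proposal is correct and follows the same route as the paper. The paper gives no separate proof; it simply applies Lemma \ref{Lm:OrbitAveragesuN} with $l=-k$ and sums over the finitely many modes $0<|k|\le K$. Your reality relation $\Omega_{-k}=\overline{\Omega_k}$ makes explicit the identification $|\Omega_k|^2=\Omega_k\Omega_{-k}$, which the paper uses tacitly.
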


Now we use Lemma \ref{Lm:OrbitAveragesuN} to analyze the cross-shell transfer of each of the other terms than the nonlinear term in (\ref{Eq:EulerArnoldZeitlinTruncatedDissipatedForcedWithFastNoise:Specific}). Let $Q: \su(N)\rightarrow \mathbb{R}$ be a quadratic observable defined by 
\begin{equation}\label{Eq:QuadraticObservableGeneralizingEnergyEnstrophy}
Q(\Omega)=\dfrac{1}{2}\sum\limits_{i,j\in \mathcal{K}_N} b_{ij}\Omega_i\Omega_j \ , \ b_{ij}=b_{ji} \ ,
\end{equation} 
where $\Omega=\sum\limits_{k\in \mathcal{K}_N}\Omega_k L_{k,N}$ as in (\ref{Eq:ZeitlinExpansionVorticityAndStreamMatrices}).

When 
\begin{equation}\label{Eq:QuadraticObservableGeneralizingEnergyEnstrophy:Coefficientb:Energy} b_{ij}=\dfrac{1}{|i|^2}\cdot \delta_{i+j, 0}\cdot \delta_{|i|\leq K}
\end{equation} 
we have $Q(\Omega)=\dfrac{1}{2}\sum\limits_{|k|\leq K, k\in \mathcal{K}_N}\dfrac{|\Omega_k|^2}{|k|^2}=\mathcal{E}_{\leq K}(\Omega)$; and when 
\begin{equation}\label{Eq:QuadraticObservableGeneralizingEnergyEnstrophy:Coefficientb:Enstrophy}  b_{ij}=\delta_{i+j, 0}\cdot \delta_{|i|\leq K} \ ,
\end{equation} 
we have $Q(\Omega)=\dfrac{1}{2}\sum\limits_{|k|\leq K, k\in \mathcal{K}_N} |\Omega_k|^2=\mathcal{Z}_{\leq K}(\Omega)$.

The evolution of the quantity $Q(\Omega_t^{\varepsilon,\delta})$ satisfies the Dynkin's formula (see \cite[Lemma 17.21, p383]{[KallenbergProbabilityBook]})

\begin{equation}\label{Eq:DynkinFormulaEvolutionQuadraticObserble}
\begin{array}{ll}
\mathbf{E}_{\Omega_0} Q(\Omega_{t\wedge \tau^{\varepsilon,\delta}}^{\varepsilon,\delta})-Q(\Omega_0)
& =\mathbf{E}_{\Omega_0^{\varepsilon,\delta} =\Omega_0} \displaystyle{\int_0^{t\wedge \tau^{\varepsilon,\delta}}} \left(\dfrac{1}{\varepsilon^2}L_{\text{fast}}+L_{\text{slow}}\right) Q(\Omega_s^{\varepsilon,\delta})ds \\
&
=\dfrac{1}{\varepsilon^2}\mathbf{E}_{\Omega_0} \displaystyle{\int_0^{t\wedge \tau^{\varepsilon,\delta}}} L_{\text{fast}} Q(\Omega_s^{\varepsilon,\delta})ds + \mathbf{E}_{\Omega_0} \displaystyle{\int_0^{t\wedge \tau^{\varepsilon,\delta}}} L_{\text{slow}} Q(\Omega_s^{\varepsilon,\delta})ds \ . 
\end{array}
\end{equation}
where $L=\dfrac{1}{\varepsilon^2}L_{\text{fast}}+L_{\text{slow}}$ is the generator of $\Omega_t^{\varepsilon,\delta}$ as defined in (\ref{Eq:EulerArnoldZeitlinTruncatedDissipatedForcedWithFastNoise:GeneratorFastSlow}) with $L_{\text{fast}}$ defined in (\ref{Eq:EulerArnoldZeitlinTruncatedDissipatedForcedWithFastNoise:FastMotionPartGenerator}) and $L_{\text{slow}}$ defined in (\ref{Eq:EulerArnoldZeitlinTruncatedDissipatedForcedWithFastNoise:EssentialSlowMotionPartGenerator}). For $Q=\mathcal{E}_{\leq K}$ or $Q=\mathcal{Z}_{\leq K}$, the terms on the RHS of (\ref{Eq:DynkinFormulaEvolutionQuadraticObserble}) give the change rates of these quantities that can be used to define the corresponding fluxes. However, two issues must be noticed: 

\begin{itemize}
\item[(1)] The term $$\begin{array}{ll}
-\mathcal{J}^{\varepsilon,\delta}_{\text{fast}}(Q, t)& \stackrel{\text{def}}{=}\displaystyle{\dfrac{1}{\varepsilon^2}\mathbf{E}_{\Omega_0}\int_0^{t\wedge \tau^{\varepsilon,\delta}}L_{\text{fast}}Q(\Omega_s^{\varepsilon,\delta})ds}
\\
& = \displaystyle{\mathbf{E}_{\Omega_0} Q(\Omega^{\varepsilon,\delta}_{t\wedge \tau^{\varepsilon,\delta}})-Q(\Omega_0)-\mathbf{E}_{\Omega_0}\int_0^{t\wedge \tau^{\varepsilon,\delta}}L_{\text{slow}}Q(\Omega_s^{\varepsilon,\delta})ds} \ ,
\end{array}$$
where the second equality is by (\ref{Eq:DynkinFormulaEvolutionQuadraticObserble}). 
This term corresponds to the change rate of $Q$ made by the fast process, and it need not be considered as the physical flux in the sense of Kraichnan's theory. The reason is that $L_{\mathrm{fast}}$ is introduced as an auxiliary fast mixing mechanism on the coadjoint leaves, rather than as a genuine shell-transfer mechanism of the Euler-Arnold dynamics. Although $L_{\mathrm{fast}}Q$ may change a shell observable $Q= \mathcal E_{\le K}$ or $Q=\mathcal Z_{\le K}$, this change represents the rapid thermalization of the leaf variable under the artificial fast process. Since $Q$ is not a Casimir invariant, this thermalization may redistribute the value of $Q$ within the same coadjoint orbit even though the Casimir parameter $z=\pi(\Omega)$ is kept fixed. The role of the fast process is to determine the leafwise statistical ensemble with respect to which this nonlinear flux observable is averaged, not to provide an additional physical flux channel. 

\item[(2)] The term 
\begin{equation}\label{Eq:FluxSlowMotion}
-\mathcal{J}_{\text{slow}}^{\varepsilon,\delta}(Q, t)=\mathbf{E}_{\Omega_0}\int_0^{t\wedge \tau^{\varepsilon,\delta}}L_{\text{slow}}Q(\Omega_s^{\varepsilon,\delta})ds
\end{equation}
gives the change rates of $Q$ made by each of the terms in $L_\text{slow}$ defined in (\ref{Eq:EulerArnoldZeitlinTruncatedDissipatedForcedWithFastNoise:EssentialSlowMotionPartGenerator}). For $Q=\mathcal E_{\le K}$ or $Q=\mathcal Z_{\le K}$, this term should be understood as a shell-budget contribution. It is not yet a single flux quantity. Depending on the particular component of $L_{\mathrm{slow}}$, the corresponding rate may represent a genuine cross-shell transfer across the cutoff $K$, or it may represent a within-shell effect, such as injection, dissipation, damping, stochastic heating, or redistribution that changes the amount of $Q$ contained in the low modes without being a transfer between the two sides of the shell boundary. As $\varepsilon\rightarrow 0$, the interpretation of the averaged rates as cross-shell or within-shell will be made only after decomposing $L_{\mathrm{slow}}$ into its individual dynamical components.
\end{itemize}

By (\ref{Eq:EulerArnoldZeitlinTruncatedDissipatedForcedWithFastNoise:EssentialSlowMotionPartGenerator}), we decompose
\begin{equation}\label{Eq:SlowMotionGeneratorDecoposition}
L_{\mathrm{slow}}
=
L_Y+L_{EA}+L_\alpha+L_\nu+L_\delta+L_f \ ,
\end{equation}
where 
\begin{equation}\label{Eq:SlowMotionGeneratorDecoposition:Termwise}
\begin{array}{lll}
L_Y Q(\Omega) & = & \dfrac{\rho^2}{2}\sum\limits_{k=1}^l\left[[D^2 Q(\Omega)](Y_k(\Omega), Y_k(\Omega))+(\nabla Q(\Omega), [DY_k(\Omega)](Y_k(\Omega)))\right] \ , 
\\
L_{EA} Q(\Omega) & = & [DQ(\Omega)]([\Psi, \Omega]) \ ,
\\
L_\alpha Q(\Omega) & = & [DQ(\Omega)](-\alpha\Omega) \ ,
\\
L_\nu Q(\Omega) & = & [DQ(\Omega)](\nu(-A)\Omega) \ ,
\\
L_\delta Q(\Omega) & = & \dfrac{\delta^2}{2}\sum\limits_{k\in \mathcal{K}_N^+} \left([D^2 Q(\Omega)](L_{k, \cos N}, L_{k, \cos N})+[D^2 Q(\Omega)](L_{k, \sin, N}, L_{k, \sin, N})\right) \ ,
\\
L_f Q(\Omega) & = & \dfrac{1}{2}\sum\limits_{k\in \mathcal{K}_N^+, |k|\approx k_f}q_k^2\left([D^2 Q(\Omega)](L_{k, \cos N}, L_{k, \cos N})+[D^2 Q(\Omega)](L_{k, \sin, N}, L_{k, \sin, N})\right) \ ,
\end{array}
\end{equation}
i.e., these terms correspond respectively to the \(Y\)-noise, the Euler-Arnold
nonlinearity, the damping, the viscosity, the background non-degenerate noise,
and the external stochastic forcing. For each component \(L_r\), \(r\in
\{Y,EA,\alpha,\nu,\delta,f\}\), we define
\begin{equation}\label{Eq:FluxComponent}
\mathcal{J}_r^{\varepsilon,\delta}(Q,t)
\stackrel{\text{def}}{=}
\mathbf{E}_{\Omega_0}\int_0^{t\wedge \tau^{\varepsilon,\delta}}j_Q^r(\Omega_s^{\varepsilon,\delta})ds 
\ , \text{ where } j_Q^r(\Omega)=-L_r Q(\Omega) \ .
\end{equation}
Then using (\ref{Eq:SlowMotionGeneratorDecoposition}), (\ref{Eq:FluxSlowMotion}), (\ref{Eq:FluxComponent}), we can decompose

\begin{equation}\label{Eq:FluxSlowMotionDecompositionIntoIndividualFluxes}
\begin{array}{ll}
\mathcal{J}_{\text{slow}}^{\varepsilon,\delta}(Q,t) & = \sum\limits_{r\in\{Y,EA,\alpha,\nu,\delta,f\}}
\mathcal J_r^{\varepsilon,\delta}(Q,t) \ .
\end{array}
\end{equation}

\begin{lemma}\label{Lm:QuadraticObserbleSlowGeneratorComponentLimit}
We have, for any $t> 0$ and each of $r\in \{Y, EA, \alpha, \nu, \delta, f\}$,
\emph{\begin{equation}\label{Lm:QuadraticObserbleSlowGeneratorComponentLimit:Eq}
\lim\limits_{\delta\rightarrow 0}\lim\limits_{\varepsilon\rightarrow 0} \mathcal{J}_r^{\varepsilon,\delta}(Q,t)=\mathbf{E}_{Z_0=\pi(\Omega_0)}\int_0^{t\wedge \tau}\overline{j_Q^r}(Z_s)ds \ ,
\end{equation}}
where the averaging operation is defined in \emph{(\ref{Eq:AveragingLeafwiseCoAdjointsuN})}.
\end{lemma}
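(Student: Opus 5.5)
The plan is to reduce the statement to Corollary \ref{Cor:BoundedFunctionConvergenceProcessToZ}, applied with $G=j_Q^r=-L_rQ$ for each $r\in\{Y,EA,\alpha,\nu,\delta,f\}$. By (\ref{Eq:FluxComponent}) we have $\mathcal{J}_r^{\varepsilon,\delta}(Q,t)=\mathbf{E}_{\Omega_0}\int_0^{t\wedge\tau^{\varepsilon,\delta}}G(\Omega_s^{\varepsilon,\delta})ds$. This is exactly the left-hand side of (\ref{Cor:BoundedFunctionConvergenceProcessToZ:Eq}), and the right-hand side of that corollary is $\mathbf{E}_{Z_0=\pi(\Omega_0)}\int_0^{t\wedge\tau}\overline{j_Q^r}(Z_s)ds$, which is what we want. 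So the whole task is to check that each $j_Q^r$ is an admissible observable. The precise requirement is smoothness, together with boundedness on the relevant region.

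First I will check regularity term by term, using that $Q$ in (\ref{Eq:QuadraticObservableGeneralizingEnergyEnstrophy}) is a quadratic polynomial in the coordinates $\Omega_k$.
\begin{itemize}
\item $L_{EA}Q$ is a cubic polynomial in $\Omega$.
\item $L_\alpha Q$ and $L_\nu Q$ are quadratic polynomials in $\Omega$.
\item $L_\delta Q$ and $L_fQ$ are constants, since $D^2Q$ is constant.
\item $L_YQ=\frac{\rho^2}{2}\sum_k\big[D^2Q(Y_k,Y_k)+(\nabla Q,DY_k\,Y_k)\big]$ is smooth because $Y_k$ is smooth.
\end{itemize}
None of these is globally bounded on $\su(N)$, but only bounded on compacts. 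So I will not use the "bounded" hypothesis of Corollary \ref{Cor:BoundedFunctionConvergenceProcessToZ} literally. Instead I localize, exactly as in the proof of Theorem \ref{Thm:AveragedFluxConvergence}. The stopped process $\Omega^{\varepsilon,\delta}_{s\wedge\tau^{\varepsilon,\delta}}$ stays in the compact set $K_0=\pi^{-1}(\overline{\mathcal{U}_0})$, which is compact because all orbits have the same norm $\|\Lambda\|$ and $\overline{\mathcal U_0}$ is compact. Hence I can replace $G$ by a smooth compactly supported $\widetilde G$ that agrees with $G$ on a neighborhood of $K_0$. This changes neither side of the identity: the process before $\tau^{\varepsilon,\delta}$ lives in $K_0$, and the leaf averages $\overline{G}(z)$ for $z\in\overline{\mathcal U_0}$ only see values on $K_0$.

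With this localization in place, the proof of Theorem \ref{Thm:BoundedFunctionConvergenceProcessToProjectedAverage} applies verbatim. The Poisson equation $L_{\mathrm{fast}}\varphi=\widetilde G-\overline{\widetilde G}\circ\pi$ is solved on $K_0$ under Assumption \ref{Assumption:FastVectorMixingFullCoadjoint}, and this gives an $O(\varepsilon^2)$ error. Corollary \ref{Cor:WeakConvergenceToZ} then handles the limits $\varepsilon\to0$ followed by $\delta\to0$. For this I need $z\mapsto\overline{\widetilde G}(z)$ to be continuous and bounded on $\overline{\mathcal U_0}$, so that the functional $F(\zeta)=\int_0^{t\wedge\tau}\overline{\widetilde G}(\zeta_s)ds$ makes sense.

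The main obstacle is that this functional is not continuous on $\mathbf C_{[0,T]}(\mathcal A)$, because of the stopping time $\tau$. I would handle it as is implicitly done in Corollary \ref{Cor:BoundedFunctionConvergenceProcessToZ}: argue that the exit time from $\mathcal U_0$ is a.s. a continuity point of the path functional for the limit diffusion $Z$. This holds because the background noise, or nondegeneracy of $\overline a$ near $\partial\mathcal U_0$, makes $\partial\mathcal U_0$ regular, and then the continuous mapping theorem applies. A second subtlety is continuity of $z\mapsto\overline{\widetilde G}(z)$. It follows from smooth dependence of $m_z$ on $z$ in the regular stratum; for $m_z=m_{\mathrm{KKS},z}$ this is the Haar pushforward $g\mapsto g\Lambda(z)g^{-1}$ with $\Lambda(z)$ depending smoothly on $z$ away from $\mathcal A_{\mathrm{singular}}$.
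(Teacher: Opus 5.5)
Your proposal is correct and follows the same route as the paper, whose proof is just the reduction to Corollary \ref{Cor:BoundedFunctionConvergenceProcessToZ} with $G=j_Q^r$, using that the stopped process stays in the compact region $\pi^{-1}(\overline{\mathcal U_0})$ where $j_Q^r$ is bounded; your localization, your remark on the exit-time functional, and your check that $z\mapsto\overline{G}(z)$ is continuous make explicit what the paper leaves implicit. One small slip: $\pi^{-1}(\overline{\mathcal U_0})$ is compact not because all orbits have the same norm (they do not), but because each leaf lies on the sphere $\|\Omega\|^2=-c_1(z)$ and $c_1$ is bounded on the compact set $\overline{\mathcal U_0}$.
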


\begin{proof}
This is a direct consequence of Corollary \ref{Cor:BoundedFunctionConvergenceProcessToZ} provided that the process is stopped before leaving the chosen regular stratum neighborhood.
\end{proof}

Note that $j_{\mathcal{E}_{\leq K}}^{EA}(\Omega)=\Pi_\mathcal{E}(K;\Omega)$ and $j_{\mathcal{Z}_{\leq K}}^{EA}(\Omega)=\Pi_\mathcal{Z}(K;\Omega)$ correspond to the nonlinear cross-shell
fluxes that we have discussed in Section \ref{Sec:AveragingFramework:EnergyEnstrophyFlux}.
We have already analyzed the above average for $\overline{j^{EA}_{\mathcal{E}_{\leq K}}}$ and $\overline{j^{EA}_{\mathcal{Z}_{\leq K}}}$ in Corollary \ref{Cor:AveragedNonlinearFluxConvergenceZero} and concluded that they are $0$ with respect to the KKS invariant measure $m_{\mathrm{KKS}}$. The rest of this section is to analyze the average of all the other $j^r_{\mathcal{E}_{\leq K}}$ and $j^r_{\mathcal{Z}_{\leq K}}$'s for $r\in \{Y, \alpha, \nu, \delta, f\}$ and investigate their role played in the shell-budget change, i.e., cross-shell transfer or within-shell effect.

\begin{lemma}\label{Lm:AverageSlowGeneratorOtherTermsjForQuadraticObservableQ}
For the quadratic observable $Q$ introduced in \emph{(\ref{Eq:QuadraticObservableGeneralizingEnergyEnstrophy})}, any $z$ in the interior of $\mathcal{A}_{\mathrm{regular}}$, and the coadjoint orbit invariant measure $dm_z(\Omega)=dm_{\text{KKS},z}(\Omega)$, assume $\mathcal{O}^*(z)=\mathcal{O}^*(\Lambda)$ for $\Lambda=\text{diag}(i\lambda_1,...,i\lambda_N)\in \su(N)$, then we have
\begin{equation}\label{Lm:AverageSlowGeneratorOtherTermsjForQuadraticObservableQ:Eq}
\begin{array}{ll}
\overline{j_Q^\alpha}(z) & = \dfrac{\alpha \|\Lambda\|^2}{c_N(N^2-1)}\sum\limits_{k\in \mathcal{K}_N} b_{k, -k} \ ;
\\
\overline{j_Q^\nu}(z) & = \dfrac{\nu \|\Lambda\|^2}{c_N(N^2-1)}\sum\limits_{k\in \mathcal{K}_N}|k|^2 b_{k, -k} \ ;
\\
\overline{j_Q^\delta}(z) & =-\dfrac{\delta^2}{2}\sum\limits_{k\in \mathcal{K}_N^+} b_{k, -k} \ ;
\\
\overline{j_Q^f}(z) & = -\dfrac{1}{2}\sum\limits_{k\in \mathcal{K}_N^+, |k|\approx k_f}q_k^2 b_{k, -k} \ .
\end{array}
\end{equation}
\end{lemma}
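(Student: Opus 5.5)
The plan is to compute each $j_Q^r=-L_rQ$ explicitly in the coordinates $\Omega=\sum_k\Omega_kL_{k,N}$ and then average over $\mathcal O^*(\Lambda)$. For the two drift terms only Lemma \ref{Lm:OrbitAveragesuN} is needed. For the two noise terms the Hessian of $Q$ is constant, so no averaging is needed at all.

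\emph{Drift terms.} Since $Q$ is quadratic, $[DQ(\Omega)](\Theta)=\sum_{i,j}b_{ij}\Omega_i\Theta_j$, where $\Theta_j$ denotes the $L_{j,N}$-coordinate of $\Theta$. For $\Theta=-\alpha\Omega$ this gives $j_Q^\alpha(\Omega)=\alpha\sum_{i,j}b_{ij}\Omega_i\Omega_j$. For $\Theta=\nu(-A)\Omega$, using (\ref{Eq:InertiaOperatorZeitlinCase}), the coordinates are $-\nu|j|^2\Omega_j$, so $j_Q^\nu(\Omega)=\nu\sum_{i,j}b_{ij}|j|^2\Omega_i\Omega_j$. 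Averaging with (\ref{Lm:OrbitAveragesuN:Eq:Average}) replaces $\Omega_i\Omega_j$ by $\frac{\|\Lambda\|^2}{c_N(N^2-1)}\delta_{i+j,0}$. This collapses each double sum to $\sum_k b_{k,-k}$, respectively $\sum_k|k|^2b_{k,-k}$, using $|-k|=|k|$. These are the first two formulas.

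\emph{Noise terms.} Here $j_Q^\delta=-L_\delta Q$ and $j_Q^f=-L_fQ$ involve only $D^2Q$, which is the constant bilinear form $(\Theta^1,\Theta^2)\mapsto\sum_{i,j}b_{ij}\Theta^1_i\Theta^2_j$. So these fluxes are constant in $\Omega$ and equal their own averages. By (\ref{Eq:RealSinCosZeitlinBasis}), $L_{k,\cos,N}$ has coordinates $\tfrac12$ at $\pm k$, and $L_{k,\sin,N}$ has coordinates $\tfrac1{2i}$ at $k$ and $-\tfrac1{2i}$ at $-k$. Then
\[
[D^2Q](L_{k,\cos,N},L_{k,\cos,N})=\tfrac14\left(b_{kk}+b_{-k,-k}+2b_{k,-k}\right),
\]
\[
[D^2Q](L_{k,\sin,N},L_{k,\sin,N})=-\tfrac14\left(b_{kk}+b_{-k,-k}-2b_{k,-k}\right).
\]
Their sum is $b_{k,-k}$; the diagonal entries cancel without needing $b_{kk}=0$. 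Multiplying by $-\delta^2/2$, respectively $-q_k^2/2$, and summing over $k\in\mathcal K_N^+$ (respectively $|k|\approx k_f$) gives the last two formulas.

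\emph{Main obstacle.} The delicate point is bookkeeping of conventions: the complex coordinates, the factor $c_N$ hidden in the normalization, and the real-basis normalization. For the drift terms I will rely on Lemma \ref{Lm:OrbitAveragesuN}, which already absorbs $c_N$. For the noise terms I will check the $\cos/\sin$ computation against the reality condition $\Omega_{-k}=\pm\overline{\Omega_k}$, which follows from $L_{k,N}^*=-L_{-k,N}$. This ensures $Q$ is real and the Hessian evaluation is legitimate, so that no stray factor $c_N/2$ from $\|L_{k,\cos,N}\|^2$ enters: the Hessian is taken in coordinates, not in the inner product.
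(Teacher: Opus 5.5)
Your proposal is correct and follows the paper's proof essentially step by step. You write $j_Q^\alpha=\alpha\sum_{i,j}b_{ij}\Omega_i\Omega_j$ and $j_Q^\nu=\nu\sum_{i,j}b_{ij}|j|^2\Omega_i\Omega_j$ and average them with Lemma~\ref{Lm:OrbitAveragesuN}, and for the noise terms you use that $D^2Q$ is constant, with the $\cos$ and $\sin$ Hessian evaluations summing to $b_{k,-k}$ exactly as the paper does; incidentally, the reality condition is $\Omega_{-k}=\overline{\Omega_k}$ with the plus sign, though this plays no role in the argument.
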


\begin{proof}
Since $Q$ defined in (\ref{Eq:QuadraticObservableGeneralizingEnergyEnstrophy}) is quadratic, we have $j_Q^\alpha(\Omega)=-[DQ(\Omega)](-\alpha\Omega))=2\alpha Q(\Omega)$. Therefore by Lemma \ref{Lm:OrbitAveragesuN} we get 
$$\begin{array}{ll}
\overline{j_Q^\alpha}(z)& =\displaystyle{\int_{\mathcal{O}^*(z)}}2\alpha Q(\Omega) dm_{\mathrm{KKS}, z}(\Omega)
\\
& = \alpha \sum\limits_{i,j\in \mathcal{K}_N}b_{ij}\displaystyle{\int_{\mathcal{O}^*(z)}}\Omega_i\Omega_j dm_{\mathrm{KKS}, z}(\Omega)
\\
& = \alpha \sum\limits_{i,j\in \mathcal{K}_N}b_{ij} \dfrac{\|\Lambda\|^2}{c_N(N^2-1)}\delta_{i+j, 0}
\\
& = \dfrac{\alpha \|\Lambda\|^2}{c_N(N^2-1)}\sum\limits_{k\in \mathcal{K}_N} b_{k, -k} \ .
\end{array}$$

Next, we have $j_Q^\nu(\Omega)=-[DQ(\Omega)](\nu(-A)\Omega)$, where $A$ defined in (\ref{Eq:InertiaOperatorZeitlinCase}) satisfies $AL_{k,N}=|k|^2 L_{k,N}$. From (\ref{Eq:QuadraticObservableGeneralizingEnergyEnstrophy}) we see that $[DQ(\Omega)](\Theta)=\sum\limits_{i,j\in \mathcal{K}_N}\Omega_i b_{ij} \Theta_j$ where $\Theta=\sum\limits_{k\in \mathcal{K}_N}\Theta_k L_{k,N}\in \su(N)$. Setting $\Theta=A\Omega=\sum\limits_{k\in \mathcal{K}_N} |k|^2\Omega_k L_{k,N}$, we get $[DQ(\Omega)](A\Omega)=\sum\limits_{i,j\in \mathcal{K}_N} |j|^2\Omega_ib_{ij}\Omega_j$, so $j_Q^\nu(\Omega)=\nu\sum\limits_{i,j\in \mathcal{K}_N}|j|^2\Omega_ib_{ij}\Omega_j$. Therefore by Lemma \ref{Lm:OrbitAveragesuN} we get
$$\begin{array}{ll}
\overline{j_Q^\nu}(z) & = \nu\sum\limits_{i,j\in \mathcal{K}_N}b_{ij}|j|^2\displaystyle{\int_{\mathcal{O}^*(z)}}\Omega_i\Omega_j dm_{\mathrm{KKS}, z}(\Omega)
\\
& = \nu \sum\limits_{i,j\in \mathcal{K}_N}b_{ij}|j|^2 \dfrac{\|\Lambda\|^2}{c_N(N^2-1)}\delta_{i+j, 0}
\\
& = \dfrac{\nu \|\Lambda\|^2}{c_N(N^2-1)}\sum\limits_{k\in \mathcal{K}_N}|k|^2 b_{k, -k} \ .
\end{array}$$

Next, we have $$j_Q^\delta(\Omega)=-\dfrac{\delta^2}{2}\sum\limits_{k\in \mathcal{K}_N^+} \left([D^2 Q(\Omega)](L_{k, \cos N}, L_{k, \cos N})+[D^2 Q(\Omega)](L_{k, \sin, N}, L_{k, \sin, N})\right) \ .$$
Since $D^2 Q(\Omega)$ is bilinear and $[D^2 Q(\Omega)](L_{i,N}, L_{j,N})=b_{ij}$, using (\ref{Eq:RealSinCosZeitlinBasis}), we get
$$\begin{array}{ll}
& [D^2Q(\Omega)](L_{k, \cos, N}, L_{k, \cos, N})
\\
= & [D^2 Q(\Omega)]\left(\dfrac{L_{k, N}+L_{-k,N}}{2}, \dfrac{L_{k,N}+L_{-k,N}}{2}\right)
\\
= & \dfrac{1}{4}\left([D^2 Q(\Omega)](L_{k,N}, L_{k,N})+[D^2 Q(\Omega)](L_{-k,N}, L_{k,N}) \right.
\\
& \qquad \left. +[D^2 Q(\Omega)](L_{k,N}, L_{-k,N})+ [D^2 Q(\Omega)](L_{-k,N}, L_{-k,N})\right)
\\
= & \dfrac{1}{4}(b_{kk}+b_{-k,k}+b_{k,-k}+b_{-k,-k}) \ .
\end{array}$$
Similarly we have $[D^2Q(\Omega)](L_{k,\sin,N}, L_{k,\sin,N})=\dfrac{1}{4}(-b_{kk}+b_{-k,k}+b_{k,-k}-b_{-k,-k})$. Thus we get $j_Q^\delta(\Omega)=-\dfrac{\delta^2}{2}\sum\limits_{k\in \mathcal{K}_N^+} b_{k, -k}$, which is a constant. Therefore $\overline{j_Q^\delta}(\Omega)=-\dfrac{\delta^2}{2}\sum\limits_{k\in \mathcal{K}_N^+} b_{k, -k}$.

Using the same computation as the above we also get $\overline{j_Q^f}(\Omega)=-\dfrac{1}{2}\sum\limits_{k\in \mathcal{K}_N^+, |k|\approx k_f}q_k^2 b_{k, -k}$.
\end{proof}

Using (\ref{Eq:QuadraticObservableGeneralizingEnergyEnstrophy:Coefficientb:Energy}) and (\ref{Eq:QuadraticObservableGeneralizingEnergyEnstrophy:Coefficientb:Enstrophy}), Lemma \ref{Lm:AverageSlowGeneratorOtherTermsjForQuadraticObservableQ} enables us to analyze the shell-budget contributions for the energy and enstrophy by the damping, viscosity, background noise and the external forcing terms. 

\begin{corollary}\label{Cor:AverageSlowGeneratorOtherTermsjForDamping}
Under the same assumptions of \emph{Lemma \ref{Lm:AverageSlowGeneratorOtherTermsjForQuadraticObservableQ}}, we have
\begin{equation}\label{Cor:AverageSlowGeneratorOtherTermsjForDamping:Eq}
\overline{j_{\mathcal{E}_{\leq K}}^\alpha}(z)=2\alpha \overline{\mathcal{E}_{\leq K}}(z) \ , \ \overline{j_{\mathcal{Z}_{\leq K}}^\alpha}(z)=2\alpha \overline{\mathcal{Z}_{\leq K}}(z) \ .
\end{equation}
Thus the damping term $-\alpha \Omega$ in \emph{(\ref{Eq:EulerArnoldZeitlinTruncatedDissipatedForcedWithFastNoise:Specific})} only contributes within-shell budget change.
\end{corollary}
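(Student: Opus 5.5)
The identity follows pointwise, before any averaging, from the fact that the damping vector field $-\alpha\Omega$ is a multiple of the Euler vector field. The plan is to establish the pointwise identity $j^\alpha_Q(\Omega)=2\alpha Q(\Omega)$ for every quadratic observable of the form (\ref{Eq:QuadraticObservableGeneralizingEnergyEnstrophy}), average it over the leaf, and then read off the interpretation.

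\textbf{Step 1: pointwise identity.} Since $Q$ is homogeneous of degree two, Euler's identity gives $[DQ(\Omega)](\Omega)=2Q(\Omega)$. Concretely, $[DQ(\Omega)](\Theta)=\sum_{i,j}\Omega_i b_{ij}\Theta_j$, so setting $\Theta=\Omega$ yields $2Q(\Omega)$. By (\ref{Eq:FluxComponent}) and (\ref{Eq:SlowMotionGeneratorDecoposition:Termwise}),
\[
j^\alpha_Q(\Omega)=-[DQ(\Omega)](-\alpha\Omega)=2\alpha Q(\Omega).
\]
This was already recorded at the start of the proof of Lemma \ref{Lm:AverageSlowGeneratorOtherTermsjForQuadraticObservableQ}.

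\textbf{Step 2: leafwise averaging.} Integrate the identity against $dm_{\mathrm{KKS},z}$ using linearity of the averaging operator (\ref{Eq:AveragingLeafwiseCoAdjointsuN}). This gives $\overline{j^\alpha_Q}(z)=2\alpha\,\overline{Q}(z)$. Specializing $b_{ij}$ to (\ref{Eq:QuadraticObservableGeneralizingEnergyEnstrophy:Coefficientb:Energy}) and then to (\ref{Eq:QuadraticObservableGeneralizingEnergyEnstrophy:Coefficientb:Enstrophy}) yields (\ref{Cor:AverageSlowGeneratorOtherTermsjForDamping:Eq}).

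\textbf{Step 3: consistency check.} As a check, compare with Lemma \ref{Lm:AverageSlowGeneratorOtherTermsjForQuadraticObservableQ} and Corollary \ref{Cor:OrbitAverageCumulativeEnergyEnstrophy}. The sum $\sum_k b_{k,-k}$ equals $\sum_{0<|k|\le K}|k|^{-2}$ for energy and $|\{0<|k|\le K\}|$ for enstrophy. Hence $\alpha\|\Lambda\|^2/(c_N(N^2-1))\cdot\sum_k b_{k,-k}$ is exactly $2\alpha$ times the orbit averages in (\ref{Cor:OrbitAverageCumulativeEnergyEnstrophy:Eq}).

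\textbf{Step 4: within-shell interpretation.} This is the only conceptual point rather than a computation. The damping field $-\alpha\Omega$ is diagonal in the Zeitlin basis: it maps $\Omega_kL_{k,N}\mapsto-\alpha\Omega_kL_{k,N}$ and couples no mode $k$ to any $l\neq k$. Consequently, the rate of change of $\mathcal{E}_{\le K}$ due to damping involves only modes with $|k|\le K$. Mode by mode, it equals $-2\alpha$ times that mode's contribution, namely $-\alpha|\Omega_k|^2/|k|^2$ and $-\alpha|\Omega_k|^2$ respectively. No term in this rate pairs a low mode with a high mode, so nothing is transferred across the cutoff $K$; the contribution is pure dissipation inside the low shell. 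I would make this precise by noting that $j^\alpha_{Q}$ splits as a sum over $|k|\le K$ of single-mode terms, in contrast with $j^{EA}_Q$, whose triadic structure (\ref{Eq:CommutatorsuN}) couples $k,l$ with $k+l$.
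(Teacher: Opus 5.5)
Your argument is correct, but its core differs from the paper's route; in fact your Step 3 is exactly the paper's proof.

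The paper inserts the coefficients $b_{ij}$ for $\mathcal{E}_{\le K}$ and $\mathcal{Z}_{\le K}$ into the closed-form average $\overline{j^\alpha_Q}(z)=\frac{\alpha\|\Lambda\|^2}{c_N(N^2-1)}\sum_k b_{k,-k}$ of Lemma \ref{Lm:AverageSlowGeneratorOtherTermsjForQuadraticObservableQ}. That formula rests on the Schur-type orbit computation of Lemma \ref{Lm:OrbitAveragesuN}. The paper then recognizes the result as $2\alpha$ times the orbit averages of Corollary \ref{Cor:OrbitAverageCumulativeEnergyEnstrophy}.

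Your Steps 1--2 instead take the pointwise identity $j^\alpha_Q=2\alpha Q$ and average it by linearity. The paper records this identity but uses it only as an intermediate step toward its explicit formula. Your route is more elementary and more general:
\begin{itemize}
\item it never invokes Lemma \ref{Lm:OrbitAveragesuN};
\item it holds for every quadratic $Q$ and every leafwise probability measure $m_z$, so it applies unchanged to the tilted measures of Assumption \ref{Assumption:FastVectorSymetryBreakingInvariantMeasureFullCoadjoint} in Section \ref{Sec:SymmetryBreakingCoAdjoint};
\item your Step 4 justifies the ``within-shell'' conclusion pointwise, since the integrand is a sum of single-mode terms $\alpha\, b_{k,-k}|\Omega_k|^2$ with $|k|\le K$. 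The paper only reads this off from the averaged formula.
\end{itemize}

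What the paper's route buys is the explicit value of the averaged damping rate, in terms of the second Casimir $\|\Lambda\|^2=-c_1(z)$ and the shell sums. It does this in a uniform template shared with the viscosity and forcing terms. That template is genuinely needed for $\overline{j^\nu_{\mathcal{Z}_{\le K}}}$ in Corollary \ref{Cor:AverageSlowGeneratorOtherTermsjForViscosity}, because $j^\nu_{\mathcal{Z}_{\le K}}=\nu\sum_{|k|\le K}|k|^2|\Omega_k|^2$ is not a pointwise multiple of $\mathcal{E}_{\le K}$ or $\mathcal{Z}_{\le K}$. For damping, your one-line argument suffices.
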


\begin{proof}
Setting $b$ as in (\ref{Eq:QuadraticObservableGeneralizingEnergyEnstrophy:Coefficientb:Energy}), using Lemma \ref{Lm:AverageSlowGeneratorOtherTermsjForQuadraticObservableQ}, we get $\overline{j_{\mathcal{E}_{\leq K}}^\alpha}(z)=\dfrac{\alpha\|\Lambda\|^2}{c_N(N^2-1)}\sum\limits_{0<|k|\leq K}\dfrac{1}{|k|^2}$,
which by Corollary \ref{Cor:OrbitAverageCumulativeEnergyEnstrophy} gives $\overline{j_{\mathcal{E}_{\leq K}}^\alpha}(z)=2\alpha \overline{\mathcal{E}_{\leq K}}(z)$. Similarly, setting $b$ as in (\ref{Eq:QuadraticObservableGeneralizingEnergyEnstrophy:Coefficientb:Enstrophy}), using Lemma \ref{Lm:AverageSlowGeneratorOtherTermsjForQuadraticObservableQ}, we get $\overline{j_{\mathcal{Z}_{\leq K}}^\alpha}(z)=\dfrac{\alpha \|\Lambda\|^2}{c_N(N^2-1)}|\{k: 0< |k|\leq K\}|$, which by Corollary \ref{Cor:OrbitAverageCumulativeEnergyEnstrophy} gives $\overline{j_{\mathcal{Z}_{\leq K}}^\alpha}(z)=2\alpha \overline{\mathcal{Z}_{\leq K}}(z)$. 
\end{proof}

\begin{corollary}\label{Cor:AverageSlowGeneratorOtherTermsjForViscosity}
Under the same assumptions of \emph{Lemma \ref{Lm:AverageSlowGeneratorOtherTermsjForQuadraticObservableQ}}, we have
\begin{equation}\label{Cor:AverageSlowGeneratorOtherTermsjForViscosity:Eq}
\overline{j_{\mathcal{E}_{\leq K}}^\nu}(z)=2\nu \overline{\mathcal{Z}_{\leq K}}(z) \ , \ \overline{j_{\mathcal{Z}_{\leq K}}^\nu}(z)=\dfrac{\nu \|\Lambda\|^2}{c_N(N^2-1)}\sum\limits_{k\in \mathcal{K}_N, |k|\leq K}|k|^2 \ .
\end{equation}
Thus the viscosity term $\nu(-A)\Omega$ in \emph{(\ref{Eq:EulerArnoldZeitlinTruncatedDissipatedForcedWithFastNoise:Specific})} only contributes within-shell budget change.
\end{corollary}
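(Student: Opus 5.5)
The plan is to mirror the proof of Corollary \ref{Cor:AverageSlowGeneratorOtherTermsjForDamping}: specialize the viscosity line of Lemma \ref{Lm:AverageSlowGeneratorOtherTermsjForQuadraticObservableQ} to the two choices of coefficients $b_{ij}$ in (\ref{Eq:QuadraticObservableGeneralizingEnergyEnstrophy:Coefficientb:Energy}) and (\ref{Eq:QuadraticObservableGeneralizingEnergyEnstrophy:Coefficientb:Enstrophy}). Then compare the resulting numbers with the orbit averages in Corollary \ref{Cor:OrbitAverageCumulativeEnergyEnstrophy}.

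For the energy, take $b_{ij}=|i|^{-2}\delta_{i+j,0}\delta_{|i|\le K}$. Then $b_{k,-k}=|k|^{-2}\delta_{|k|\le K}$, so $|k|^2 b_{k,-k}=\delta_{|k|\le K}$. The formula for $\overline{j^\nu_Q}(z)$ in Lemma \ref{Lm:AverageSlowGeneratorOtherTermsjForQuadraticObservableQ} gives
\[
\overline{j^\nu_{\mathcal{E}_{\le K}}}(z)=\frac{\nu\|\Lambda\|^2}{c_N(N^2-1)}\,\bigl|\{k\in\mathcal{K}_N:0<|k|\le K\}\bigr| .
\]
By the second line of (\ref{Cor:OrbitAverageCumulativeEnergyEnstrophy:Eq}), the orbit average $\overline{\mathcal{Z}_{\le K}}(z)$ equals $\frac{\|\Lambda\|^2}{2c_N(N^2-1)}|\{k:0<|k|\le K\}|$. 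Hence the right-hand side above is exactly $2\nu\,\overline{\mathcal{Z}_{\le K}}(z)$. This can also be seen pointwise before averaging: $[D\mathcal{E}_{\le K}(\Omega)](A\Omega)=2\mathcal{Z}_{\le K}(\Omega)$, since $A$ multiplies mode $k$ by $|k|^2$. Averaging this identity gives the claim directly, which serves as a consistency check.

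For the enstrophy, take $b_{ij}=\delta_{i+j,0}\delta_{|i|\le K}$. Then $|k|^2 b_{k,-k}=|k|^2\delta_{|k|\le K}$, and the lemma immediately yields the stated expression $\frac{\nu\|\Lambda\|^2}{c_N(N^2-1)}\sum_{|k|\le K}|k|^2$.

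For the interpretive sentence, note that both integrands $j^\nu_Q$ are diagonal quadratic forms in the modes $|k|\le K$. Their averages are positive combinations of orbit averages of $|\Omega_k|^2$ with $|k|\le K$ only, with no coupling to modes $|k|>K$. This is exactly the within-shell dissipation criterion used for the damping term in Corollary \ref{Cor:AverageSlowGeneratorOtherTermsjForDamping}.

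The only step needing care is the counting convention: the sums run over the full lattice $\mathcal{K}_N$ (both $k$ and $-k$), consistently with Corollary \ref{Cor:OrbitAverageCumulativeEnergyEnstrophy}. I would check this so that the factor $2$ in $2\nu\overline{\mathcal{Z}_{\le K}}$ comes out correctly. Otherwise the argument is routine substitution.
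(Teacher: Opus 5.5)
Your proposal is correct and follows the paper's proof essentially verbatim. You substitute the two coefficient choices $b_{ij}$ into the viscosity line of Lemma \ref{Lm:AverageSlowGeneratorOtherTermsjForQuadraticObservableQ}, and for the energy you identify the resulting count $|\{k: 0<|k|\le K\}|$ with $2\overline{\mathcal{Z}_{\le K}}(z)$ via Corollary \ref{Cor:OrbitAverageCumulativeEnergyEnstrophy}. Your extra pointwise identity $j^\nu_{\mathcal{E}_{\le K}}(\Omega)=\nu[D\mathcal{E}_{\le K}(\Omega)](A\Omega)=2\nu\mathcal{Z}_{\le K}(\Omega)$ is also correct, and it is a genuine strengthening: it shows the energy relation holds for any leaf measure $m_z$ (e.g.\ the tilted measure of Section \ref{Sec:SymmetryBreakingCoAdjoint}), not only for the KKS average.
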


\begin{proof}
Setting $b$ as in (\ref{Eq:QuadraticObservableGeneralizingEnergyEnstrophy:Coefficientb:Energy}), using Lemma \ref{Lm:AverageSlowGeneratorOtherTermsjForQuadraticObservableQ}, we get $\overline{j_{\mathcal{E}_{\leq K}}^\nu}(z)=\dfrac{\nu\|\Lambda\|^2}{c_N(N^2-1)}|\{k: 0<|k|\leq K\}|$,
which by Corollary \ref{Cor:OrbitAverageCumulativeEnergyEnstrophy} gives $\overline{j_{\mathcal{E}_{\leq K}}^\nu}(z)=2\nu \overline{\mathcal{Z}_{\leq K}}(z)$. 

Setting $b$ as in (\ref{Eq:QuadraticObservableGeneralizingEnergyEnstrophy:Coefficientb:Enstrophy}), using Lemma \ref{Lm:AverageSlowGeneratorOtherTermsjForQuadraticObservableQ}, we get $\overline{j_{\mathcal{Z}_{\leq K}}^\nu}(z)=\dfrac{\nu \|\Lambda\|^2}{c_N(N^2-1)}\sum\limits_{k\in \mathcal{K}_N, |k|\leq K}|k|^2$. 
\end{proof}

\begin{corollary}\label{Cor:AverageSlowGeneratorOtherTermsjForBackgroundNoiseExternalForcing}
Under the same assumptions of \emph{Lemma \ref{Lm:AverageSlowGeneratorOtherTermsjForQuadraticObservableQ}}, we have
\begin{equation}\label{Cor:AverageSlowGeneratorOtherTermsjForBackgroundNoiseExternalForcing:Eq}
\begin{array}{l}
\overline{j_{\mathcal{E}_{\leq K}}^\delta}(z)= -\dfrac{\delta^2}{2}\sum\limits_{k\in \mathcal{K}_N^+, |k|\leq K} \dfrac{1}{|k|^2}\ , \ \overline{j_{\mathcal{Z}_{\leq K}}^\delta}(z)= -\dfrac{\delta^2}{2}|\{k\in \mathcal{K}_N^+: 0<|k|\leq K\}| \ .
\\
\overline{j_{\mathcal{E}_{\leq K}}^f}(z)= -\dfrac{1}{2} \sum\limits_{k\in \mathcal{K}_N^+, |k|\approx k_f, |k|\leq K} q_k^2\dfrac{1}{|k|^2}\ , \ \overline{j_{\mathcal{Z}_{\leq K}}^f}(z)=-\dfrac{1}{2} \sum\limits_{k\in \mathcal{K}_N^+, |k|\approx k_f, |k|\leq K} q_k^2 \ .
\end{array}
\end{equation}
Thus the background noise $\delta d\mathfrak{b}_t$ and the external forcing term $f(t)dt$ in \emph{(\ref{Eq:EulerArnoldZeitlinTruncatedDissipatedForcedWithFastNoise:Specific})} only contribute within-shell budget change.
\end{corollary}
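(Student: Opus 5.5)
The plan is to specialize the last two lines of Lemma \ref{Lm:AverageSlowGeneratorOtherTermsjForQuadraticObservableQ}, namely the formulas for $\overline{j_Q^\delta}(z)$ and $\overline{j_Q^f}(z)$, to the coefficient choices (\ref{Eq:QuadraticObservableGeneralizingEnergyEnstrophy:Coefficientb:Energy}) and (\ref{Eq:QuadraticObservableGeneralizingEnergyEnstrophy:Coefficientb:Enstrophy}). The proof of that lemma already shows that $j_Q^\delta$ and $j_Q^f$ are constant functions of $\Omega$. They depend only on $\sum_{k\in\mathcal{K}_N^+} b_{k,-k}$, restricted to the forced shell $|k|\approx k_f$ in the case of $f$. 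No orbit averaging, and hence no use of Lemma \ref{Lm:OrbitAveragesuN}, is needed.

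\textbf{Substituting the energy coefficients.} For $\mathcal{E}_{\leq K}$ we have $b_{k,-k}=|k|^{-2}\delta_{|k|\leq K}$. One point needs checking: the shell indicator in (\ref{Eq:QuadraticObservableGeneralizingEnergyEnstrophy:Coefficientb:Energy}) is attached to the first index $i=k$. Since $|{-k}|=|k|$, the entries $b_{k,-k}$ and $b_{-k,k}$ agree, so the symmetric combination $\tfrac14(b_{k,-k}+b_{-k,k}+\dots)$ from the lemma's proof reduces to $b_{k,-k}$ as claimed. Inserting this gives
\[
\overline{j^\delta_{\mathcal{E}_{\leq K}}}=-\tfrac{\delta^2}{2}\sum_{k\in\mathcal{K}_N^+,|k|\leq K}|k|^{-2},
\qquad
\overline{j^f_{\mathcal{E}_{\leq K}}}=-\tfrac12\sum_{k\in\mathcal{K}_N^+,\ |k|\approx k_f,\ |k|\leq K}q_k^2|k|^{-2}.
\]

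\textbf{Substituting the enstrophy coefficients.} For $\mathcal{Z}_{\leq K}$ we have $b_{k,-k}=\delta_{|k|\leq K}$. The $\delta$-sum then counts the lattice points $k\in\mathcal{K}_N^+$ with $|k|\leq K$, and the $f$-sum collects the $q_k^2$ over the forced modes with $|k|\leq K$. These are the remaining two formulas in (\ref{Cor:AverageSlowGeneratorOtherTermsjForBackgroundNoiseExternalForcing:Eq}).

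\textbf{Within-shell interpretation.} Each averaged rate is a sum of independent single-mode contributions $b_{k,-k}$, one for each mode $k$ that is driven and lies inside the shell $|k|\leq K$. No term couples a mode with $|k|\leq K$ to a mode with $|k|>K$. The noise therefore injects energy or enstrophy directly into the low modes it forces, which is a within-shell change of budget rather than a transfer across the cutoff. In particular, for $K<k_f$ the forcing contribution vanishes identically.

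\textbf{Expected difficulty.} There is no real obstacle. The only point needing care is the index convention noted above, i.e.\ that the symmetrized second derivative reproduces $b_{k,-k}$ even though the indicator in $b_{ij}$ depends only on $i$. This holds because $|{-k}|=|k|$.
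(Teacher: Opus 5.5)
Your proposal is correct and follows the same route as the paper. The paper states this corollary without a separate proof: it is obtained by substituting the coefficients (\ref{Eq:QuadraticObservableGeneralizingEnergyEnstrophy:Coefficientb:Energy}) and (\ref{Eq:QuadraticObservableGeneralizingEnergyEnstrophy:Coefficientb:Enstrophy}) into the $\delta$- and $f$-formulas of Lemma \ref{Lm:AverageSlowGeneratorOtherTermsjForQuadraticObservableQ}, whose integrands are already constant in $\Omega$, so the leaf average is trivial. One slip in your side remark: the forcing contribution vanishes only for $K$ below the whole forcing band $|k|\approx k_f$, not for every $K<k_f$, since forced modes may have $|k|$ slightly less than $k_f$.
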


It remains to analyze the shell-budget contribution of the $Y$ term in (\ref{Eq:EulerArnoldZeitlinTruncatedDissipatedForcedWithFastNoise:Specific}). By (\ref{Eq:SlowMotionGeneratorDecoposition:Termwise}) and (\ref{Eq:FluxComponent}) we know that
$$\overline{j_Q^Y}(z)
= -\dfrac{\rho^2}{2}\sum\limits_{k=1}^l \int_{\mathcal{O}^*(z)} \left[[D^2 Q](Y_k, Y_k)+(\nabla Q, [DY_k](Y_k))\right](\Omega) dm_{\mathrm{KKS},z}(d\Omega) \ ,
$$
which is generally non-zero.
For general vector fields $Y_k$ and $Q=\mathcal{E}_{\leq K}$ or $Q=\mathcal{Z}_{\leq K}$, usually there is no closed-form formula for the above quantity. However, by (\ref{Assumption:FastVectorEnergyPreservingPerturbation:Eq:PerturbationParameter}) and (\ref{Assumption:FastVectorEnergyPreservingPerturbation:Eq:PerturbationParameter:ConstraintK}), on the regular compact region considered above, the order of $\overline{j_Q^Y}(z)$ is at $O(\alpha+\nu)$ and thus by Corollary \ref{Cor:AverageSlowGeneratorOtherTermsjForDamping}, \ref{Cor:AverageSlowGeneratorOtherTermsjForViscosity} it is comparable with the shell-budget contributions made by the damping and viscosity terms. In summary, $\overline{j^Y_{\mathcal{E}_{\leq K}}}(z)$ and $\overline{j^Y_{\mathcal{Z}_{\leq K}}}(z)$ should be interpreted as a small transverse stochastic
shell-budget effect, or leakage effect, rather than as a nonlinear cross-shell flux
carried by the Euler-Arnold dynamics.

\section{Symmetry breaking and emergence of non-trivial flux}\label{Sec:SymmetryBreakingCoAdjoint}

In the previous section we considered the fully symmetric situation in which the fast leafwise dynamics preserves the KKS measure on each coadjoint orbit. In that case the averaged contribution of the Euler-Arnold nonlinearity to every shell energy and shell enstrophy flux vanishes. This shows that complete coadjoint thermalization, although it produces rapid mixing along the orbit, does not by itself create a non-equilibrium transfer of energy or enstrophy across shells.

We now explain how non-trivial flux can emerge once this KKS symmetry is broken. The basic mechanism is simple. The Euler-Arnold nonlinear term may produce instantaneous transfer between shells, but under the KKS orbital ensemble the positive and negative contributions cancel exactly. If the invariant measure of the fast tangent dynamics is tilted away from the KKS measure, this cancellation is no longer exact. The resulting statistical bias can select a preferred direction of nonlinear shell transfer. The tilted invariant measure is made precise in the following Assumption, which is made throughout this section.

\begin{assumption}[Symmetry breaking invariant measure on coadjoint orbits]\label{Assumption:FastVectorSymetryBreakingInvariantMeasureFullCoadjoint}
Under \emph{Assumption \ref{Assumption:FastVectorMixingFullCoadjoint}}, we further assume that the invariant measure of the fast process $\Omega_t^X$ is given by tilting the KKS measure, i.e.,
\begin{equation}\label{Assumption:FastVectorSymetryBreakingInvariantMeasureFullCoadjoint:Eq:Measure}
dm_z(\Omega)\stackrel{\mathrm{def}}{=}dm_{z, \mathrm{KKS}}^\kappa(\Omega)=(1+\kappa q(\Omega)+O(\kappa^2))\cdot dm_{z, \mathrm{KKS}}(\Omega) \ ,
\end{equation}
where $0<\kappa<\!\!<1$ is a small tilting parameter and $q(\Omega)\in \mathbf{C}^{(2)}_{\mathcal{O}^*(z)}(\mathbb{R})$ is not identically equal to $0$ such that \begin{equation}\label{Assumption:FastVectorSymetryBreakingInvariantMeasureFullCoadjoint:Eq:qCentering}
\int_{\mathcal{O}^*(z)}q(\Omega) dm_{z,\mathrm{KKS}}(\Omega)=0 \ .
\end{equation}
Moreover, the remainder term $O(\kappa^2)$ is uniform on $\mathcal{O}^*(z)$.
\end{assumption}

\begin{lemma}\label{Lm:SymetryBreakingAverageFlux}
Under \emph{Assumption \ref{Assumption:FastVectorSymetryBreakingInvariantMeasureFullCoadjoint}}, for any $z$ in the interior of $\mathcal{A}_{\text{regular}}$, as $dm_z(\Omega)=dm^\kappa_{z,\mathrm{KKS}}(\Omega)$, we have
\begin{equation}\label{Lm:SymetryBreakingAverageFlux:Eq}
\begin{array}{l}
\displaystyle{\overline{\Pi}_\mathcal{E}(K;z)=\kappa \int_{\mathcal{O}^*(z)}\Pi_\mathcal{E}(K; \Omega)q(\Omega)dm_{z, \mathrm{KKS}}(\Omega)}+O(\kappa^2) \ , \ 
\\
\displaystyle{\overline{\Pi}_\mathcal{Z}(K;z)=\kappa \int_{\mathcal{O}^*(z)}\Pi_\mathcal{Z}(K; \Omega)q(\Omega)dm_{z, \mathrm{KKS}}(\Omega)}+O(\kappa^2) \ .
\end{array}
\end{equation}
In particular, if the integrals in \emph{(\ref{Lm:SymetryBreakingAverageFlux:Eq})} are nonzero, then the leafwise averaged fluxes $\overline{\Pi}_{\mathcal{E}}(K; z)$, $\overline{\Pi}_\mathcal{Z}(K;z)$ are nonzero for small enough $\kappa>0$. 
\end{lemma}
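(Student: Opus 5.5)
The plan is to expand the tilted measure inside the definition of the leafwise averaged flux. By Definition \ref{Def:FluxZeitlin:LeafwiseAveragedFluxEnergyEnstropy} and Assumption \ref{Assumption:FastVectorSymetryBreakingInvariantMeasureFullCoadjoint},
\[
\overline{\Pi}_\mathcal{E}(K;z)=\int_{\mathcal{O}^*(z)}\Pi_\mathcal{E}(K;\Omega)\,dm_{z,\mathrm{KKS}}(\Omega)+\kappa\int_{\mathcal{O}^*(z)}\Pi_\mathcal{E}(K;\Omega)q(\Omega)\,dm_{z,\mathrm{KKS}}(\Omega)+\int_{\mathcal{O}^*(z)}\Pi_\mathcal{E}(K;\Omega)\,R_\kappa(\Omega)\,dm_{z,\mathrm{KKS}}(\Omega),
\]
where $R_\kappa$ is the $O(\kappa^2)$ remainder of the density. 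The same decomposition will be written for $\overline{\Pi}_\mathcal{Z}(K;z)$.

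The first term is the KKS average of $\Pi_\mathcal{E}(K;\Omega)=-[D\mathcal{E}_{\leq K}(\Omega)]([\Psi,\Omega])$. I will show it vanishes by invoking Lemma \ref{Lm:LeafAverageVanishKKSMeasure} with $Q=\mathcal{E}_{\leq K}$, exactly as in Corollary \ref{Cor:LeafAverageVanishKKSMeasure:EnergyEnstrophy}. This uses that $z$ lies in the interior of $\mathcal{A}_{\text{regular}}$. It also uses that $\mathcal{E}_{\leq K}$ and $\mathcal{Z}_{\leq K}$ are smooth quadratic observables on $\su(N)$.

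The second term is exactly the claimed leading-order coefficient. For the third term, $\Pi_\mathcal{E}(K;\cdot)$ is a polynomial, hence continuous, on the compact orbit $\mathcal{O}^*(z)$. It is therefore bounded by some $M_z<\infty$. The uniformity of the remainder in Assumption \ref{Assumption:FastVectorSymetryBreakingInvariantMeasureFullCoadjoint} gives $\sup_{\mathcal{O}^*(z)}|R_\kappa|\le C\kappa^2$. Since $m_{z,\mathrm{KKS}}$ is a probability measure, the third term is bounded by $M_zC\kappa^2=O(\kappa^2)$. The enstrophy case is identical.

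For the final assertion, write $I_\mathcal{E}=\int\Pi_\mathcal{E}q\,dm_{z,\mathrm{KKS}}\neq 0$. Then $|\overline{\Pi}_\mathcal{E}(K;z)|\ge \kappa|I_\mathcal{E}|-C'\kappa^2>0$ once $\kappa<|I_\mathcal{E}|/C'$, and likewise for enstrophy.

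No step is a real obstacle. The only point needing care is the remainder bound, which relies on the uniformity hypothesis in Assumption \ref{Assumption:FastVectorSymetryBreakingInvariantMeasureFullCoadjoint} together with the compactness of the orbit. I would note that the vanishing of the zeroth-order term is where the KKS symmetry of Section \ref{Sec:HeatingUpAllCoAdjoint} enters.
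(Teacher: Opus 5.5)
Your proposal is correct and follows the paper's own argument: it expands the tilted density, kills the zeroth-order term using the KKS-vanishing result (Lemma \ref{Lm:LeafAverageVanishKKSMeasure} via Corollary \ref{Cor:LeafAverageVanishKKSMeasure:EnergyEnstrophy}), and absorbs the rest into $O(\kappa^2)$. Your explicit remainder bound (boundedness of the polynomial flux on the compact orbit plus uniformity of the remainder) and your explicit threshold for $\kappa$ in the final assertion only spell out steps the paper leaves implicit.
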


\begin{proof}
We simply combine (\ref{Eq:AveragingLeafwiseCoAdjointsuN}), (\ref{Assumption:FastVectorSymetryBreakingInvariantMeasureFullCoadjoint:Eq:Measure}) with Corollary \ref{Cor:LeafAverageVanishKKSMeasure:EnergyEnstrophy} to get
$$\begin{array}{ll}
\overline{\Pi}_{\mathcal{E}}(K;z) & = \displaystyle{\int_{\mathcal{O}^*(z)} \Pi_{\mathcal{E}}(K; \Omega)dm^\kappa_{z, \mathrm{KKS}}}(\Omega)
\\
& = \displaystyle{\int_{\mathcal{O}^*(z)} \Pi_{\mathcal{E}}(K; \Omega)(1+\kappa q(\Omega)+O(\kappa^2))dm_{z, \mathrm{KKS}}}(\Omega)
\\
& = \displaystyle{\int_{\mathcal{O}^*(z)} \Pi_{\mathcal{E}}(K; \Omega)dm_{z, \mathrm{KKS}}}(\Omega)+\kappa \displaystyle{\int_{\mathcal{O}^*(z)} \Pi_{\mathcal{E}}(K; \Omega) q(\Omega)dm_{z, \mathrm{KKS}}}(\Omega) + O(\kappa^2)
\\
& = \kappa \displaystyle{\int_{\mathcal{O}^*(z)} \Pi_{\mathcal{E}}(K; \Omega) q(\Omega)dm_{z, \mathrm{KKS}}}(\Omega) + O(\kappa^2) \ .
\end{array}$$
The case of $\overline{\Pi}_{\mathcal{Z}}(K; z)$ follows a similar argument.
\end{proof}

In the rest of this section we discuss, from the construction of appropriate fast vector fields $X_j(\Omega)$ in (\ref{Eq:EulerArnoldZeitlinTruncatedDissipatedForcedWithFastNoise:FastMotionPart}), when can Assumption \ref{Assumption:FastVectorSymetryBreakingInvariantMeasureFullCoadjoint} be satisfied, and when the integrals in (\ref{Lm:SymetryBreakingAverageFlux:Eq}) are nonzero.

Suppose we have Hamiltonian vector fields $X_j^0(\Omega)=[\Omega, \nabla h_j(\Omega)]$, $j=1,...,m$ on the coadjoint orbit $\mathcal{O}^*(z)$, $z\in \mathcal{A}_{\text{regular}}$, that satisfies Assumption \ref{Assumption:FastVectorHamiltonianFullCoadjoint} (and thus Assumption \ref{Assumption:FastVectorMixingFullCoadjoint}). Let $\boldsymbol{v}_j=\boldsymbol{v}_j(\Omega)$, $j=1,...,m$ be non-Hamiltonian smooth vector fields on $\mathcal{O}^*(z)$ and consider the combined vector fields
\begin{equation}\label{Eq:SymmetryBreakingFastVectorFieldsCoadjoint}
X_j(\Omega)=X_j^0(\Omega)+\kappa \boldsymbol{v}_j(\Omega) \ , \ 0<\kappa <\!\!<1 \ , \ j=1,...,m \ .
\end{equation}
Inserting (\ref{Eq:SymmetryBreakingFastVectorFieldsCoadjoint}) into (\ref{Eq:EulerArnoldZeitlinTruncatedDissipatedForcedWithFastNoise:FastMotionPartGenerator}), the corresponding fast generator takes the form
\begin{equation}\label{Eq:SymmetryBreakingFastMotionGenerator}
L_{\text{fast}} V(\Omega)=L_{\text{fast}}^0 V(\Omega) + \kappa \mathcal{R} V(\Omega) + \kappa^2 \mathcal{S} V(\Omega) \ ,
\end{equation}
where
\begin{equation}\label{Eq:SymmetryBreakingFastMotionGenerator:L0}
L_{\text{fast}}^0 V(\Omega)= \dfrac{1}{2}\sum\limits_{j=1}^m \left[[D^2 V(\Omega)](X_j^0(\Omega), X_j^0(\Omega))+(\nabla V(\Omega), [DX_j^0(\Omega)](X_j^0(\Omega)))\right] \ ,
\end{equation}

\begin{equation}\label{Eq:SymmetryBreakingFastMotionGenerator:R}
\begin{array}{ll}
\mathcal{R} V(\Omega) & = \sum\limits_{j=1}^m [D^2 V(\Omega)](X_j^0(\Omega), \boldsymbol{v}_j(\Omega))
\\
& \qquad + \dfrac{1}{2}\sum\limits_{j=1}^m\left(\nabla V(\Omega), [DX_j^0(\Omega)](\boldsymbol{v}_j(\Omega))+[D\boldsymbol{v}_j(\Omega)](X_j^0(\Omega))\right) \ ,
\end{array}
\end{equation}

\begin{equation}\label{Eq:SymmetryBreakingFastMotionGenerator:S}
\mathcal{S} V(\Omega)= \dfrac{1}{2}\sum\limits_{j=1}^m \left[[D^2 V(\Omega)](\boldsymbol{v}_j(\Omega), \boldsymbol{v}_j(\Omega))+(\nabla V(\Omega), [D\boldsymbol{v}_j(\Omega)](\boldsymbol{v}_j(\Omega)))\right] \ .
\end{equation}

We consider the invariant measure on $\mathcal{O}^*(z)$ calculated from the fast generator (\ref{Eq:SymmetryBreakingFastMotionGenerator}) and we write it in terms of the density function $r^\kappa(\Omega)$ with respect to $m_{\mathrm{KKS}}$. This gives us
$$L_{\text{fast}}^* r^\kappa(\Omega) = 0 \ ,$$
where for $f, g\in \mathbf{C}^{(2)}_{\mathcal{O}^*(z)}(\mathbb{R})$ we have
$$\int_{\mathcal{O}^*(z)} (L_\text{fast}f(\Omega))g(\Omega)dm_{\mathrm{KKS}, z}(\Omega)=\int_{\mathcal{O}^*(z)} f(\Omega)(L^*_\text{fast}g(\Omega))dm_{\mathrm{KKS}, z}(\Omega) \ .$$
Write 
$r^\kappa(\Omega)=1+\kappa q(\Omega)+O(\kappa^2)$ such that $q(\Omega)$ satisfies (\ref{Assumption:FastVectorSymetryBreakingInvariantMeasureFullCoadjoint:Eq:qCentering}),
and make use of (\ref{Eq:SymmetryBreakingFastMotionGenerator}) we get from the above that 
\begin{equation}\label{Eq:SymmetryBreakingFastMotionInvariantMeasureEquationForq}
(L_\text{fast}^0)^* q = -\mathcal{R}^* 1 \ ,
\end{equation}
where $1$ is the constant one function. Equation (\ref{Eq:SymmetryBreakingFastMotionInvariantMeasureEquationForq}) is the equation for $q(\Omega)$, which, under the hypoellipticity assumption of $L_\text{fast}^0$ admits a unique solution $q(\Omega)\in \mathbf{C}^{(2)}_{\mathcal{O}^*(z)}(\mathbb{R})$ that satisfy the constraint (\ref{Assumption:FastVectorSymetryBreakingInvariantMeasureFullCoadjoint:Eq:qCentering}), given by the formula
\begin{equation}\label{Eq:SymmetryBreakingFastMotionInvariantMeasureSolutionFormulaForq}
q = -\left[(L_\text{fast}^0)^*\right]^{-1}\left(\mathcal{R}^* 1\right) \ .
\end{equation}
Here the inverse operators are understood on the corresponding zero-mean subspaces.

Consider the solutions $\phi_{\mathcal{E}, K, z}(\Omega)$ and $\phi_{\mathcal{Z}, K, z}(\Omega)$ satisfying the equations

\begin{equation}\label{Eq:SymmetryBreakingFluxFastGenerator0PoissonEquation:Energy}
L^0_{\text{fast}}\phi_{\mathcal{E}, K, z}(\Omega)=\Pi_\mathcal{E}(K; \Omega) \ , \ \int_{\mathcal{O}^*(z)}\phi_{\mathcal{E}, K, z}(\Omega)dm_{z, \mathrm{KKS}}(\Omega)=0 \ ,
\end{equation}

\begin{equation}\label{Eq:SymmetryBreakingFluxFastGenerator0PoissonEquation:Enstrophy}
L^0_{\text{fast}}\phi_{\mathcal{Z}, K, z}(\Omega)=\Pi_\mathcal{Z}(K; \Omega) \ , \ \int_{\mathcal{O}^*(z)}\phi_{\mathcal{Z}, K, z}(\Omega)dm_{z, \mathrm{KKS}}(\Omega)=0 \ .
\end{equation}
These equations admit unique solutions $\phi_{\mathcal{E}, K, z}(\Omega), \phi_{\mathcal{Z}, K, z}(\Omega)\in \mathbf{C}^{(2)}_{\mathcal{O}^*(z)}(\mathbb{R})$ due to Corollary \ref{Cor:LeafAverageVanishKKSMeasure:EnergyEnstrophy} and the hypoellipticity assumption of $L^0_{\text{fast}}$. They are given by the formulae
\begin{equation}\label{Eq:SymmetryBreakingFluxFastGenerator0PoissonEquation:Solution}
\phi_{\mathcal{E}, K, z}(\Omega)=\left[L^0_{\text{fast}}\right]^{-1} (\Pi_\mathcal{E}(K; \Omega)) \ , \ \phi_{\mathcal{Z}, K, z}(\Omega)=\left[L^0_{\text{fast}}\right]^{-1} (\Pi_\mathcal{Z}(K; \Omega)) \ . 
\end{equation}
 
\begin{corollary}\label{Cor:SymetryBreakingAverageFluxNotZeroCondition}
If
\begin{equation}\label{Cor:SymetryBreakingAverageFluxNotZeroCondition:Eq:Energy}
\int_{\mathcal{O}^*(z)}\mathcal{R} \phi_{\mathcal{E}, K, z}(\Omega)dm_{z, \mathrm{KKS}}(\Omega) \neq 0 \ ,
\end{equation}
then the averaged nonlinear energy flux $\overline{\Pi}_{\mathcal{E}}(K; z)$ is nonzero for small enough $\kappa$. Similarly, if 
\begin{equation}\label{Cor:SymetryBreakingAverageFluxNotZeroCondition:Eq:Enstrophy}
\int_{\mathcal{O}^*(z)}\mathcal{R} \phi_{\mathcal{Z}, K, z}(\Omega)dm_{z, \mathrm{KKS}}(\Omega) \neq 0 \ ,
\end{equation}
then the averaged nonlinear enstrophy flux $\overline{\Pi}_{\mathcal{Z}}(K; z)$ is nonzero for small enough $\kappa >0$.
\end{corollary}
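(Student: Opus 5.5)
Combine Lemma \ref{Lm:SymetryBreakingAverageFlux} with the first-order perturbation formula for $q$ in (\ref{Eq:SymmetryBreakingFastMotionInvariantMeasureSolutionFormulaForq}), moving the inverse operator onto the flux by duality. By Lemma \ref{Lm:SymetryBreakingAverageFlux}, $\overline{\Pi}_{\mathcal{E}}(K;z)=\kappa I_{\mathcal{E}}+O(\kappa^2)$ with $I_{\mathcal{E}}=\int_{\mathcal{O}^*(z)}\Pi_{\mathcal{E}}(K;\Omega)q(\Omega)\,dm_{z,\mathrm{KKS}}(\Omega)$. It therefore suffices to show that $I_{\mathcal{E}}$ equals, up to sign, $\int_{\mathcal{O}^*(z)}\mathcal{R}\phi_{\mathcal{E},K,z}\,dm_{z,\mathrm{KKS}}$. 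Once that holds, condition (\ref{Cor:SymetryBreakingAverageFluxNotZeroCondition:Eq:Energy}) gives $I_{\mathcal{E}}\neq0$, and $|\overline{\Pi}_{\mathcal{E}}|\ge\kappa|I_{\mathcal{E}}|-C\kappa^2>0$ for small $\kappa$. Here $C$ is finite because $\Pi_{\mathcal{E}}(K;\cdot)$ is bounded on the compact orbit and the $O(\kappa^2)$ remainder is uniform by Assumption \ref{Assumption:FastVectorSymetryBreakingInvariantMeasureFullCoadjoint}.

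The key computation is a chain of identities. First substitute the Poisson equation (\ref{Eq:SymmetryBreakingFluxFastGenerator0PoissonEquation:Energy}), $\Pi_{\mathcal{E}}=L^0_{\mathrm{fast}}\phi_{\mathcal{E},K,z}$, which is solvable because Corollary \ref{Cor:LeafAverageVanishKKSMeasure:EnergyEnstrophy} gives $\Pi_{\mathcal{E}}$ zero KKS-mean. Then use the adjoint in $L^2(m_{z,\mathrm{KKS}})$ to get $I_{\mathcal{E}}=\int\phi_{\mathcal{E},K,z}\,(L^0_{\mathrm{fast}})^*q\,dm_{z,\mathrm{KKS}}$. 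Next insert (\ref{Eq:SymmetryBreakingFastMotionInvariantMeasureEquationForq}), $(L^0_{\mathrm{fast}})^*q=-\mathcal{R}^*1$, to obtain $I_{\mathcal{E}}=-\int\phi_{\mathcal{E},K,z}\,\mathcal{R}^*1\,dm_{z,\mathrm{KKS}}$. Finally dualize once more to conclude $I_{\mathcal{E}}=-\int\mathcal{R}\phi_{\mathcal{E},K,z}\,dm_{z,\mathrm{KKS}}$. The enstrophy case is word for word the same, using (\ref{Eq:SymmetryBreakingFluxFastGenerator0PoissonEquation:Enstrophy}).

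The main obstacle is justifying these manipulations rigorously. Three points need checking.
\begin{itemize}
\item The adjoint identities involve no boundary terms. This holds because $\mathcal{O}^*(z)$ is a compact manifold without boundary and, by hypoellipticity of $L^0_{\mathrm{fast}}$, $\phi$ and $q$ are smooth, so the integrations by parts defining $L^*$ and $\mathcal{R}^*$ are valid.
\item The $O(\kappa)$ expansion of the density $r^\kappa$ is legitimate, i.e.\ the first-order coefficient $q$ defined through (\ref{Eq:SymmetryBreakingFastMotionInvariantMeasureSolutionFormulaForq}) is exactly the $q$ of Assumption \ref{Assumption:FastVectorSymetryBreakingInvariantMeasureFullCoadjoint}. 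I would take this identification as given from the preceding derivation. It uses the solvability condition $\int\mathcal{R}^*1\,dm_{z,\mathrm{KKS}}=\int\mathcal{R}1\,dm_{z,\mathrm{KKS}}=0$, which is true since $\mathcal{R}$ annihilates constants.
\item The constant function $1$ spans the kernel of $(L^0_{\mathrm{fast}})^*$. This follows because the Hamiltonian fields $X^0_j$ preserve $m_{\mathrm{KKS}}$ and the fast process is ergodic, so KKS is the unique invariant measure. This is what fixes the zero-mean normalization of $q$ and makes the inversions well defined on zero-mean subspaces.
\end{itemize}
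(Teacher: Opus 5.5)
Your proposal is correct and is essentially the paper's proof: the same duality chain $\int \Pi_{\mathcal{E}}\, q\,dm_{z,\mathrm{KKS}} = -\int \phi_{\mathcal{E},K,z}\,\mathcal{R}^*1\,dm_{z,\mathrm{KKS}} = -\int \mathcal{R}\phi_{\mathcal{E},K,z}\,dm_{z,\mathrm{KKS}}$, obtained from the equations for $q$ and $\phi_{\mathcal{E},K,z}$, followed by Lemma~\ref{Lm:SymetryBreakingAverageFlux}. The only difference is cosmetic: you substitute the forward equations $\Pi_{\mathcal{E}}=L^0_{\text{fast}}\phi_{\mathcal{E},K,z}$ and $(L^0_{\text{fast}})^*q=-\mathcal{R}^*1$, whereas the paper transposes the inverse operators, so your version never has to address normalizations on the zero-mean subspaces.
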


\begin{proof}
Using (\ref{Eq:SymmetryBreakingFastMotionInvariantMeasureSolutionFormulaForq}), (\ref{Eq:SymmetryBreakingFluxFastGenerator0PoissonEquation:Solution}) and the definition of the adjoint operation, the integrals in (\ref{Lm:SymetryBreakingAverageFlux:Eq}) can be written as
\begin{equation}\label{Cor:SymetryBreakingAverageFluxNotZeroCondition:Eq:EnergyMainIntegral}
\begin{array}{rl} \displaystyle{\int_{\mathcal{O}^*(z)}}\Pi_{\mathcal{E}}(K; \Omega)q(\Omega)dm_{z, \mathrm{KKS}}(\Omega)
= & - \displaystyle{\int_{\mathcal{O}^*(z)}}\Pi_{\mathcal{E}}(K; \Omega)\left[(L_\text{fast}^0)^*\right]^{-1}\left(\mathcal{R}^* 1\right)(\Omega)dm_{z, \mathrm{KKS}}(\Omega)
\\
= & - \displaystyle{\int_{\mathcal{O}^*(z)}}\left[L_{\text{fast}}^0\right]^{-1}\left(\Pi_{\mathcal{E}}(K; \Omega)\right) \mathcal{R}^* 1(\Omega) dm_{z, \mathrm{KKS}}(\Omega)
\\
= & - \displaystyle{\int_{\mathcal{O}^*(z)}}\phi_{\mathcal{E}, K, z}(\Omega) \mathcal{R}^* 1(\Omega) dm_{z, \mathrm{KKS}}(\Omega)
\\
= & - \displaystyle{\int_{\mathcal{O}^*(z)}}\mathcal{R}\phi_{\mathcal{E}, K, z}(\Omega) dm_{z, \mathrm{KKS}}(\Omega) \ ,
\end{array}
\end{equation}
and similarly
\begin{equation}\label{Cor:SymetryBreakingAverageFluxNotZeroCondition:Eq:EnstrophyMainIntegral}
\int_{\mathcal{O}^*(z)}\Pi_{\mathcal{Z}}(K;\Omega)q(\Omega)dm_{z,\mathrm{KKS}}(\Omega)=-\int_{\mathcal{O}^*(z)}\mathcal{R} \phi_{\mathcal{Z}, K, z}(\Omega)dm_{z, \mathrm{KKS}}(\Omega) \ .
\end{equation}
Therefore the conclusions of this Corollary follow from Lemma \ref{Lm:SymetryBreakingAverageFlux}.
\end{proof}

Using (\ref{Eq:SymmetryBreakingFastMotionGenerator:R}), the integrals of (\ref{Cor:SymetryBreakingAverageFluxNotZeroCondition:Eq:Energy}) and (\ref{Cor:SymetryBreakingAverageFluxNotZeroCondition:Eq:Enstrophy}) can be written as 
\begin{equation}\label{Eq:SymetryBreakingAverageFluxNotZeroIntergalGeneralForm:Energy}
\begin{array}{ll}
& \displaystyle{\int_{\mathcal{O}^*(z)}}\mathcal{R} \phi_{\mathcal{E}, K, z}(\Omega)dm_{z, \mathrm{KKS}}(\Omega)
\\
= & \sum\limits_{j=1}^m \displaystyle{\int_{\mathcal{O}^*(z)}}[D^2 \phi_{\mathcal{E}, K, z}(\Omega)](X_j^0(\Omega), \boldsymbol{v}_j(\Omega)) dm_{z, \mathrm{KKS}}(\Omega)
\\
& \qquad + \dfrac{1}{2}\sum\limits_{j=1}^m \displaystyle{\int_{\mathcal{O}^*(z)}} \left(\nabla \phi_{\mathcal{E}, K, z}(\Omega), [DX_j^0(\Omega)](\boldsymbol{v}_j(\Omega))+[D\boldsymbol{v}_j(\Omega)](X_j^0(\Omega))\right)dm_{z, \mathrm{KKS}}(\Omega) \ ,
\end{array}
\end{equation}
and
\begin{equation}\label{Eq:SymetryBreakingAverageFluxNotZeroIntergalGeneralForm:Enstrophy}
\begin{array}{ll}
& \displaystyle{\int_{\mathcal{O}^*(z)}}\mathcal{R} \phi_{\mathcal{Z}, K, z}(\Omega)dm_{z, \mathrm{KKS}}(\Omega)
\\
= & \sum\limits_{j=1}^m \displaystyle{\int_{\mathcal{O}^*(z)}}[D^2 \phi_{\mathcal{Z}, K, z}(\Omega)](X_j^0(\Omega), \boldsymbol{v}_j(\Omega))dm_{z, \mathrm{KKS}}(\Omega)
\\
& \qquad + \dfrac{1}{2}\sum\limits_{j=1}^m \displaystyle{\int_{\mathcal{O}^*(z)}}\left(\nabla \phi_{\mathcal{Z}, K, z}(\Omega), [DX_j^0(\Omega)](\boldsymbol{v}_j(\Omega))+[D\boldsymbol{v}_j(\Omega)](X_j^0(\Omega))\right)dm_{z, \mathrm{KKS}}(\Omega) \ .
\end{array}
\end{equation}

By Corollary \ref{Cor:SymetryBreakingAverageFluxNotZeroCondition}, nonzero energy and enstrophy flux happens if the RHS of (\ref{Eq:SymetryBreakingAverageFluxNotZeroIntergalGeneralForm:Energy}) or (\ref{Eq:SymetryBreakingAverageFluxNotZeroIntergalGeneralForm:Enstrophy}) is non-zero.

\bibliographystyle{plain}
\bibliography{bibliography_coadjoint_averaging_flux}

\end{document}